\newtheorem{theorem}{Theorem}
\newtheorem{lemma}[theorem]{Lemma}
\newtheorem{corollary}[theorem]{Corollary}
\newtheorem{proposition}[theorem]{Proposition}
\definecolor{light}{gray}{.91}
\newlength{\fboxsepdefault}
\newlength{\fboxsepnew}
\newlength{\SatzboxTextwidth}
\newlength{\GrauBoxTextwidth}
\newlength{\HauptsatzboxTextwidth}
\newlength{\hfillparboxwidth}
\providecommand{\dup}{{\ensuremath{{\operatorname{d}}}}}
 \providecommand{\C}{{\ensuremath{\mathbf{C}}}}
 \providecommand{\E}{{\ensuremath{\mathbbm{E}}}}
 \providecommand{\N}{{\ensuremath{\mathbbm{N}}}}
 \renewcommand{\P}{{\ensuremath{\mathbbm{P}}}}
 \providecommand{\R}{{\ensuremath{\mathbbm{R}}}}
 \providecommand{\1}{{\ensuremath{\mathbbm{1}}}}
 \providecommand{\CP}{{\ensuremath{\cal P}}}
 \providecommand{\CQ}{{\ensuremath{\cal Q}}}
 \providecommand{\MCG}{{\ensuremath{\mathcal G}}}
 \providecommand{\th} {{\ensuremath{\hat{t}}}}
 \providecommand{\Ft}    {{\ensuremath{\tilde{F}}}}
 \providecommand{\St}    {{\ensuremath{\tilde{S}}}}
 \providecommand{\Zt}    {{\ensuremath{\tilde{Z}}}}
 \providecommand{\Fb}    {{\ensuremath{\bar{F}}}}
 \providecommand{\Sb}    {{\ensuremath{\bar{S}}}}
 \providecommand{\Xb}    {{\ensuremath{\bar{X}}}}
 \providecommand{\Zb}    {{\ensuremath{\bar{Z}}}}
 \providecommand{\nb} {{\ensuremath{\bar{n}}}}
\providecommand{\phit}{\ensuremath{\tilde{\phi}}}%
\providecommand{\ro}[1]    {{{(#1)}}}
\providecommand{\rob}[1]   {{{\bigl(#1\bigr)}}}
\providecommand{\robb}[1]  {{{\biggl(#1\biggr)}}}
\providecommand{\roB}[1]   {{{\Bigl(#1\Bigr)}}}
\providecommand{\sqb}[1]  {{{\bigl[#1\bigr]}}}
\providecommand{\sqbb}[1] {{{\biggl[#1\biggr]}}}
\providecommand{\sqB}[1]  {{{\Bigl[#1\Bigr]}}}
\providecommand{\absbb}[1]{{\ensuremath{\Bigl|#1\Bigr|}}}
\providecommand{\floor}[1]  {{\ensuremath{\lfloor#1\rfloor}}}
\providecommand{\al}      {{\ensuremath{\alpha}}}
\renewcommand{\th}    {{\ensuremath{\theta}}}
\providecommand{\ld}      {{\ensuremath{\lambda}}}
\providecommand{\eps}     {{\ensuremath{\varepsilon}}}
\providecommand{\limn}   {{\ensuremath{{\displaystyle \lim_{n \ra \infty}}}}}
\providecommand{\limt}   {{\ensuremath{{\displaystyle \lim_{t \ra \infty}}}}}
\providecommand{\limT}   {{\ensuremath{{\displaystyle \lim_{T \ra \infty}}}}}
\providecommand{\qqasn}  {\ensuremath{\qquad\text{as }n\to\infty}}
\providecommand{\ra}{\rightarrow}
\providecommand{\wlimeps}%
      {{\ensuremath{\stackrel{\eps \rightarrow \infty}%
                            {\Longrightarrow}}}}
\providecommand{\varwlim}  {\xrightarrow{\text{w}}}
\providecommand{\varwlimn}  {\xrightarrow[n  \ra\infty]{\text{w}}}
\providecommand{\varwlimt}  {\xrightarrow[t  \ra\infty]{\text{w}}}
\providecommand{\fa}{\ensuremath{\;\;\forall\;}}
\providecommand{\Gen}{{\ensuremath{\MCG}}}
\providecommand{\Genb}{{\ensuremath{\bar{\Gen}}}}
\providecommand{\del}{{\ensuremath{\partial}}}
\providecommand{\clearemptydoublepage}%
    {\newpage{\pagestyle{empty}\cleardoublepage}}
\newlength{\mylen}
\providecommand{\eqd}{{\ensuremath{\;\overset{d}{=}\;}}}
\def\mathclapinternal#1#2{\clap{$\mathsurround=0pt#1{#2}$}}
\def\clap#1{\hbox to 0pt{\hss#1\hss}}
\def\mathclap{\mathpalette\mathclapinternal}
\newlength{\Breit@}\newlength{\breit@}\newlength{\diff@renz}
\providecommand{\St@ckrel}[2]{%
  \settowidth{\Breit@}{\ensuremath{^{#1}}}
  \settowidth{\breit@}{\ensuremath{#2}}
  \ifthenelse{\Breit@>\breit@}{
    \setlength{\diff@renz}{(\Breit@-\breit@)/2}
    \hspace{\diff@renz}
  }{}}
\providecommand{\etStackrel}[2]{%
  \St@ckrel{#1}{#2}%
  &\stackrel{\mathclap{#1}}{#2}
  \St@ckrel{#1}{#2}}
\renewcommand{\E}{\mathbb{E}}
\renewcommand{\P}{\mathbb{P}}
  \newcommand{\arcosh}{\operatorname{arcosh}}
\begin{document}
\author{
Christian B\"oinghoff\footnote{Research supported by the German Research Foundation (DFG) and the 
Russian Foundation of Basic Research (Grant DFG-RFBR 08-01-91954)}
\ and
Martin Hutzenthaler\footnote{Research supported by the Institute for Mathematical Sciences of the National University of Singapore}\\[2mm]
\emph{University of Frankfurt}\/ and
\emph{University of Munich (LMU)}
}

\title{Branching diffusions in random environment}
\maketitle
%
\makeatletter
\let\@makefnmark\relax
\let\@thefnmark\relax
\@footnotetext{\emph{AMS 2010 subject classification:} 60J80; 60K37, 60J60}
\@footnotetext{\emph{Key words and phrases:}
Branching process, random environment, diffusion approximation,
Laplace transform,
survival probability,
backbone construction,
immortal individual,
ultimate survival
  }
\makeatother
\normalsize\rm
\begin{abstract}
  We consider the diffusion approximation of branching processes in random environment
  (BPREs).
  This diffusion approximation is similar to  and mathematically more tractable than BPREs.
  We obtain the exact asymptotic behavior of the survival probability.
  As in the case of BPREs, there is a phase transition in the subcritical regime
  due to different survival opportunities.
  In addition, we characterize the process conditioned to never go extinct
  and establish a backbone construction.
  In the strongly subcritical regime,
  mean offspring numbers
  are increased but still subcritical
  in the process conditioned to never go extinct.
  Here survival is solely due to an immortal individual,
  whose offspring are the ancestors of additional families.
  In the weakly subcritical regime, the mean offspring number
  is supercritical in the process conditioned to never go extinct.
  Thus this process survives with positive probability
  even if there was no immortal individual.
\end{abstract}

%
%
%
\section{Introduction and main results}%
\label{sec:Introduction}
Branching processes in random environment (BPREs)
have been introduced by Smith and
Wilkinson (1969)\nocite{SmithWilkinson1969}
(see also Smith (1968)\nocite{Smith1968})
and have attracted
considerable interest in the last decade
(e.g.~\cite{
AfanasyevEtAl2011pre,
AfanasyevEtAl2005AOP,
AfanasyevEtAl2005SPA,
BansayeBerestycki2009,
BoeinghoffEtAl2010,
Dyakonova2008,
GeigerKersting2002,
GeigerEtAl2003,
Kozlov2006,
Vatutin2004,
WangFang1999}).
On the one hand this is due to the more realistic model
compared with classical branching processes.
On the other hand this is due to interesting properties
such as phase transitions in the subcritical regime.
Here we consider the diffusion approximation of BPREs which can be viewed
as continuous mass branching process in random environment.
Our results are qualitatively analogous to discrete mass BPREs.
The main observation of this article is that the diffusion approximation of BPREs
is a simple model ($3$ parameters)
having explicit formulas for various expressions.
In particular, the contributions of the branching process and of
the environment are explicit in terms of the parameters.
These properties make the diffusion approximation interesting for applications.

The diffusion approximation of BPREs has been conjectured by
Keiding (1975)\nocite{Keiding1975} and has been established
by Kurtz (1978)\nocite{Kurtz1978}.
This diffusion approximation is the strong
solution $(Z_t,S_t)_{t\geq0}$ of the stochastic differential equations (SDEs)
\begin{equation}  \begin{split}  \label{eq:BDRE}
 dZ_t&=\frac 12 \sigma_e^2 Z_t dt+Z_t dS_t+\sqrt{\sigma_b^2 Z_t}dW^{(b)}_t\\
 dS_t&=\alpha dt +\sqrt{\sigma_e^2}dW^{(e)}_t
\end{split}     \end{equation}
for $t\geq0$
where $S_0=0$.
The parameters satisfy $\al\in\R,\sigma_e\in[0,\infty)$ and $\sigma_b\in(0,\infty)$.
The processes $(W^{(b)}_{t})_{t\geq0}$
and $(W^{(e)}_{t})_{t\geq0}$ are independent standard Brownian motions.
We denote the process $(Z_t,S_t)_{t\geq0}$ as
  \textit{branching diffusion in random environment} (BDRE).
Moreover, we will refer to $(S_t)_{t\geq0}$ as the \textit{associated Brownian motion},
which is a non-standard Brownian motion.
To be accurate, the conjecture of
Keiding (1975)\nocite{Keiding1975} did not include 
the term $\frac{1}2\sigma_e^2 Z_t\,dt$ which is a characteristic part
for random environment.
Moreover, Helland (1981)\nocite{Helland1981}
gave an inaccurate ``proof'' of N.~Keiding's conjecture.
So we attribute the correct statement of the diffusion approximation
of BPREs to Kurtz (1978)\nocite{Kurtz1978}.
The BDRE has not been studied since Kurtz (1978).
For this reason, we first discuss properties of the BDRE obtained
by Kurtz (1978) beginning with the diffusion approximation.

First we introduce BPREs in order to state the diffusion approximation.
Our formulation follows the notation of
Afanasyev et al.~\cite{AfanasyevEtAl2005AOP}.
Let $\Delta$ be the Polish space of probability measures
on $\N_0:=\{0,1,2,\ldots\}$ equipped with the metric of total
variation.
Fix $n\in\N:=\{1,2,\ldots\}$ for the moment.
Let $\Pi^{(n)}=\rob{Q_0^{(n)},Q_1^{(n)},\ldots}$
be a sequence of independent and identically distributed random
variables taking values in $\Delta$.
Conditioned on $\Pi^{(n)}$ the BPRE $(Z_i^{(n)})_{i\in\N_0}$
is defined recursively through
\begin{equation}  \label{eq:BPRE}
  Z^{(n)}_{i+1}:=\sum_{j=1}^{Z_{i}^{(n)}}\xi_{j,i}^{(n)},\qquad i\in\N_0,
\end{equation}
where $Z_0^{(n)}$ is independent of $\Pi^{(n)}$
and where $(\xi_{j,i}^{(n)})_{j,i\in\N_0}$ conditioned on $\Pi^{(n)}$
are independent random variables with distribution
\begin{equation}
  \P\rob{\xi_{j,i}^{(n)}=k|\Pi^{(n)}}= Q_i^{(n)}(k),
  \qquad\fa j,i,k\in\N_0.
\end{equation}
Let the mean of the environment at time $i\in\N_0$ be defined through
\begin{equation}
  m(Q^{(n)}_i):=\sum_{k=0}^\infty k \,Q_i^{(n)}(k).
\end{equation}
Define a continuous time version of the BPRE
through $Z_t^{(n)}:=Z_{\floor{t}}^{(n)}$ 
where $\floor{t}:=\max\{m\in\N_0\colon m\leq t\}$
for every $t\in[0,\infty)$.
The associated random  walk $\ro{S_t^{(n)}}_{t\geq0}$ is defined
through
\begin{equation}  \label{eq:random.walk}
  S_t^{(n)}:=\sqrt{n}\sum_{i=0}^{\floor{t}-1}\log\rob{m\rob{Q_{i}^{(n)}}},
  \qquad t\in[0,\infty).
\end{equation}
This random walk is central for the BPRE as it determines the mean
of the BPRE:
\begin{equation}  \label{eq:expectation_BPRE}
  \E\sqb{Z_t^{(n)}|\Pi^{(n)}}
  =\E\sqb{Z_0^{(n)}}\prod_{i=0}^{\floor{t}-1}m\rob{Q_{i}^{(n)}}
  =\E\sqb{Z_0^{(n)}}\exp\roB{\frac{S_t^{(n)}}{\sqrt{n}}},
  \qquad t\in[0,\infty).
\end{equation}
We included the factor $\sqrt{n}$ in the definition of the
associated random walk to have the usual scaling in the limit as $n\to\infty$.

Next we let $n\to\infty$ to obtain the diffusion approximation.
The following assumptions mainly ensure that the associated
random walk converges to a Brownian motion with infinitesimal
drift $\al\in\R$
and infinitesimal standard deviation $\sigma_e\in[0,\infty)$:
\begin{align}
 \label{eq:assumption.mean}
  \limn n\cdot\E\sqB{m(Q_0^{(n)})-1}&=\alpha\in\R\\
 \label{eq:assumption.var.e}
  \limn n\cdot\E\sqB{\rob{m(Q_0^{(n)})-1}^2}&=\sigma_e^2\in[0,\infty)\\
 \label{eq:assumption.third.moment}
  \sup_{n\in\N} \E\sqbb{
     \sum_{k=0}^\infty\absbb{\frac{k}{m\big(Q_0^{(n)}\big)}-1}^3\,\cdot Q_0^{(n)}(k)
                       }
          &<\infty.
\end{align}
So the branching process is near-critical as
$m(Q_0^{(n)})\to 1$ in distribution as $n\to\infty$.
If $\sigma_e>0$, then the environment comprises
both supercritical and subcritical phases.
In addition, we suppose that
\begin{equation}  \begin{split} \label{eq:assumption.var.b}
  \limn \E\sqB{\sum_{k=0}^\infty
        \Big(\frac{k}{m\big(Q_0^{(n)}\big)}-1\Big)^2
        Q_0^{(n)}(k)}
  =\sigma_b^2\in(0,\infty).
\end{split}     \end{equation}
So $\al$ is a parameter of expected super-/subcriticality,
$\sigma_e$ is a parameter for the standard deviation of the offspring
mean around the critical value $1$ and $\sigma_b^2$ is the
mean offspring variance per individual per generation.
Under the above assumptions, 
Corollary 2.18
of
Kurtz (1978)\nocite{Kurtz1978}
implies
that the suitably rescaled
BPRE converges in distribution to a diffusion.
\begin{proposition} \label{p:diffusion.approximation}
  Assume that $Z_0^{(n)}/n\to Z_0$ in distribution as $n\to\infty$.
  Under the assumptions~\eqref{eq:assumption.mean},
  \eqref{eq:assumption.var.e},
  \eqref{eq:assumption.third.moment}
  and
  \eqref{eq:assumption.var.b} we have that
  \begin{equation} \label{eq:diffusion.approximation}
    \roB{\frac{Z_{tn}^{(n)}}{n},\frac{S_{tn}^{(n)}}{\sqrt{n}}}_{t\geq0}
    \varwlimn\rob{Z_t,S_t}_{t\geq0}
  \end{equation}
  in the Skorohod topology (see e.g.~\cite{EthierKurtz1986}) where the limiting diffusion is
  the strong solution of the SDEs~\eqref{eq:BDRE}.
\end{proposition}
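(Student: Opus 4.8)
The plan is to recognise Proposition~\ref{p:diffusion.approximation} as a direct application of Kurtz's general diffusion-approximation theorem for Markov chains (Kurtz (1978), Corollary~2.18; see also Ethier and Kurtz (1986)). Thus the work splits into four steps: (a) put the rescaled process in Markovian form; (b) compute its infinitesimal mean and covariance and match them with the coefficients of~\eqref{eq:BDRE}; (c) check the Lindeberg / negligible-jumps hypothesis of that theorem; (d) invoke well-posedness of~\eqref{eq:BDRE} to identify the limit.

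For (a): although $(Z_i^{(n)})_{i}$ is not Markov, the pair $(Z_i^{(n)},Q_i^{(n)})$ is, and integrating out the i.i.d.\ environment $\Pi^{(n)}$ turns the rescaled process $X_i^{(n)}:=(Z_i^{(n)}/n,\,S_i^{(n)}/\sqrt n)$, run on the time scale $i=\lfloor tn\rfloor$, into a time-homogeneous Markov chain on $\tfrac1n\N_0\times\tfrac1{\sqrt n}\R$. For (b): with $M:=m(Q_0^{(n)})$, the conditional increments at a state $(z,s)$ are, by~\eqref{eq:BPRE}, a $Z$-increment which given the environment is $1/n$ times a sum of $Z_i^{(n)}$ i.i.d.\ offspring --- conditional mean $z(M-1)$, conditional variance $\tfrac1n z M^2\sum_k(\tfrac kM-1)^2 Q_0^{(n)}(k)$ --- together with an $S$-increment equal to $\log M$, independent of the branching noise. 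Multiplying the first two conditional moments by $n$ and letting $n\to\infty$, assumptions~\eqref{eq:assumption.mean}--\eqref{eq:assumption.var.b} make them converge to the drift and diffusion coefficients of~\eqref{eq:BDRE}: the second moments produce the diffusion matrix with entries $\sigma_e^2 z^2+\sigma_b^2 z$, $\sigma_e^2 z$, $\sigma_e^2$ (the off-diagonal $\sigma_e^2 z$ encoding the multiplicative coupling $Z_t\,dS_t$, and $\sigma_b^2 z$ being the only purely branching contribution), while for the drift one expands $\log M=(M-1)-\tfrac12(M-1)^2+O(|M-1|^3)$ and uses~\eqref{eq:assumption.mean} and~\eqref{eq:assumption.var.e}; the second-order term here is exactly what produces the characteristic drift contribution $\tfrac12\sigma_e^2 Z_t\,dt$ of~\eqref{eq:BDRE} --- the term N.~Keiding's conjecture omitted and an earlier argument handled incorrectly.

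For (c): the only nontrivial remaining hypothesis of Kurtz's corollary is a Lindeberg-type bound forcing the limit to be continuous. For the $S$-coordinate this follows from a bound on $n\,\E[|\log M|^3]$ (a consequence of~\eqref{eq:assumption.var.e}), and for the $Z$-coordinate from~\eqref{eq:assumption.third.moment}: one generation's increment is a sum of $Z_i^{(n)}$ independent terms whose third centred moments are uniformly controlled by~\eqref{eq:assumption.third.moment}, so the rescaled third moment is $O(n^{-1/2})$, which is precisely what Kurtz's corollary needs. Corollary~2.18 of Kurtz (1978) then gives convergence of $\big(X_{\lfloor tn\rfloor}^{(n)}\big)_{t\geq0}$ in the Skorohod topology to any solution of the martingale problem for the limiting generator, started from the law of $(Z_0,0)$ (determined by the hypothesis $Z_0^{(n)}/n\to Z_0$ in distribution). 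For (d): $(S_t)$ is a Brownian motion with drift, and conditionally on $(S_t)$ the equation for $Z$ is a one-dimensional SDE with Lipschitz drift and square-root diffusion coefficient $\sqrt{\sigma_b^2 z}$, so Yamada--Watanabe gives pathwise uniqueness while nonnegativity and non-explosion of $Z$ follow from a comparison argument and Gronwall's inequality; hence the martingale problem is well posed and the limit is the law of the strong solution of~\eqref{eq:BDRE}, i.e.~\eqref{eq:diffusion.approximation}.

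I expect the main obstacle to be step (b): keeping careful track of the second-order (It\^o) corrections, so that the $\tfrac12\sigma_e^2 Z_t\,dt$ drift and the off-diagonal covariance $\sigma_e^2 Z_t$ come out with the right constants and the two coordinates are genuinely coupled in the limit rather than decoupled; and, more bureaucratically, checking that assumptions~\eqref{eq:assumption.mean}--\eqref{eq:assumption.var.b} are literally the hypotheses under which Corollary~2.18 of Kurtz (1978) is stated. Granting the latter off the shelf, little else remains.
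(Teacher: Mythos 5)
Your top-level plan (reduce everything to an invocation of Kurtz (1978), Corollary~2.18) is exactly what the paper does, but the verification you sketch is a genuinely different and more self-contained decomposition than the paper's. The paper does not compute the limiting generator, check a Lindeberg condition, or argue well-posedness via Yamada--Watanabe: all of that is packaged inside Kurtz's corollary, whose hypotheses are stated specifically for BPREs. What the paper checks is (i) a law-of-large-numbers statement that the accumulated conditional branching variance $A_n(t)=\tfrac1n\sum_{i\le nt}\alpha_i^{(n)}$ converges a.s.\ to $t\sigma_b^2$, using~\eqref{eq:assumption.var.b}; (ii) a functional CLT for the rescaled random walk $S^{(n)}_{tn}/\sqrt n$ to a drifted Brownian motion (citing Ra\v{c}kauskas--Suquet), reading off the drift and infinitesimal variance from~\eqref{eq:assumption.mean}--\eqref{eq:assumption.var.e} and a Taylor expansion of $\log$; and (iii) a uniform bound showing a rescaled third-moment sum vanishes, coming directly from~\eqref{eq:assumption.third.moment}. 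So the paper's verification is shorter, and your step (b) --- matching conditional moments with the coefficients of~\eqref{eq:BDRE}, including the It\^o correction that yields the $\tfrac12\sigma_e^2 Z_t\,dt$ term --- is really a re-derivation of the content of Kurtz's corollary rather than a hypothesis to be checked before applying it; likewise your step (d) (Yamada--Watanabe, comparison, Gronwall) is already built into Kurtz's statement. Your approach buys a more transparent and self-contained account of where the $\tfrac12\sigma_e^2 Z_t\,dt$ drift and the $\sigma_e^2 Z_t$ off-diagonal covariance come from (which the paper leaves to Kurtz), at the cost of duplicating work; the paper's approach buys brevity by trusting the black box, at the cost of leaving the reader to look up what Corollary~2.18 actually asserts. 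You anticipated this yourself in your closing remark, and indeed the nontrivial residual work is just to line up the given assumptions with Kurtz's hypotheses, which is what the paper does.
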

\noindent
The assumptions of Corollary 2.18 of Kurtz (1978)\nocite{Kurtz1978} are
checked in Section~\ref{sec:diffusion.approximation}.

Inserting the random environment 
$(S_t)_{t\geq0}$ into the diffusion equation~\eqref{eq:BDRE}
of the BDRE, we see that $(Z_t)_{t\geq0}$ solves the stochastic
differential equation
\begin{equation}  \label{eq:Z}
 dZ_t=\Bigl(\alpha+\frac 12 \sigma_e^2\Bigr)Z_t\, dt
     +\sqrt{\sigma_e^2 Z_t^2}\,dW^{(e)}_t+\sqrt{\sigma_b^2 Z_t}\,dW^{(b)}_t
\end{equation}
for $t\in[0,\infty)$.
Comparing with Feller's branching diffusion (i.e.~\eqref{eq:Z} with $\sigma_e=0$)
there are two differences.
First there is an additional drift term $\tfrac{1}{2}\sigma_e^2 Z_t dt$.
Second there is an additional diffusion term $\sigma_e Z_t dW^{(e)}_t$.
Both terms originate in the conditional expectation
\begin{equation}  \label{eq:expectation_BDRE}
  \E\sqb{Z_t|S_t}
  =\E\sqb{Z_0}\exp\rob{S_t},
  \qquad t\in[0,\infty),
\end{equation}
almost surely, which is a geometric Brownian motion
and solves the SDE
\begin{equation}
  dY_t= \Bigl(\alpha+\frac 12 \sigma_e^2\Bigr)Y_t dt
     +\sigma_e Y_t \,dW^{(e)}_t,\quad Y_0=\E\sqb{Z_0},
\end{equation}
for $t\geq0$.

Now we come to properties of the BDRE which embody the
branching property conditioned on the environment.
Theorem 2.10 of Kurtz (1978) implies that the BDRE is in fact a reweighted
and time-changed branching diffusion (see Lemma~\ref{l:time.change.immigration} below
for a different proof):
\begin{proposition}   \label{p:time.change}
  Assume $\al\in\R$, $\sigma_b\in(0,\infty)$ and $\sigma_e\in[0,\infty)$.
  Let
  $(W_t^{(b)})_{t\geq0}$
  and
  $(W_t^{(e)})_{t\geq0}$
  be independent standard Brownian motions.
  Let $(F_t)_{t\geq0}$ be the strong solution of
  \begin{equation}
    dF_t=\sqrt{F_t}dW_t^{(b)}
  \end{equation}
  for $t\in[0,\infty)$
  and let $S_t:=\al t+\sigma_e W_t^{(e)}$ for $t\in[0,\infty)$.
  Moreover define $(\tau(t))_{t\geq0}$ through
  \begin{equation}
    \tau(t):=\int_0^t e^{-S_s}\sigma_b^2\,ds
  \end{equation}
  for $t\in[0,\infty)$.
  Then
  \begin{equation}  \label{eq:time.change}
    \left( F_{\tau(t)}e^{S_t}, S_t \right)_{t\geq 0}
  \end{equation}
  is a weak solution of~\eqref{eq:BDRE}, that is, 
  is a version of the BDRE~\eqref{eq:BDRE}.
\end{proposition}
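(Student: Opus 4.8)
The plan is to recognise $(Z_t,S_t)_{t\ge0}:=\bigl(F_{\tau(t)}e^{S_t},S_t\bigr)_{t\ge0}$ as a weak solution of~\eqref{eq:BDRE} by combining a (random) time change of Feller's branching diffusion with It\^o's formula. Note first that $S_0=0$, $Z_0=F_{\tau(0)}e^{S_0}=F_0$, and that the clock $\tau$ is $C^1$ and strictly increasing with $\tau'(t)=\sigma_b^2 e^{-S_t}>0$; moreover, since $(W^{(b)}_t)_{t\ge0}$ and $(W^{(e)}_t)_{t\ge0}$ are independent, $\tau$ is $\sigma\bigl((W^{(e)}_s)_{s\ge0}\bigr)$-measurable and hence independent of the Feller diffusion $(F_t)_{t\ge0}$.

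The first step is to show that $G_t:=F_{\tau(t)}$ solves
\[
  dG_t=\sqrt{\sigma_b^2 e^{-S_t}G_t}\,d\Wt^{(b)}_t,\qquad G_0=F_0,
\]
for some standard Brownian motion $(\Wt^{(b)}_t)_{t\ge0}$ that is independent of $(S_t)_{t\ge0}$. The cleanest route I see is to condition on a realisation of the environment path $s\mapsto S_s$: then $\tau$ becomes a deterministic, absolutely continuous time change, and applying the time-change theorem for continuous local martingales to $N_u:=\int_0^u\sqrt{F_r}\,dW^{(b)}_r=F_u-F_0$ shows that $N_{\tau(\cdot)}$ is a continuous local martingale with $\langle N_{\tau(\cdot)}\rangle_t=\langle N\rangle_{\tau(t)}=\int_0^{\tau(t)}F_r\,dr=\int_0^t\sigma_b^2 e^{-S_s}G_s\,ds$, which by the representation theorem equals $\int_0^t\sqrt{\sigma_b^2 e^{-S_s}G_s}\,d\Wt^{(b)}_s$ for a standard Brownian motion $\Wt^{(b)}$ (possibly after an enlargement, as the integrand may vanish once $G$ is absorbed at $0$). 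Since $G_t=F_0+N_{\tau(t)}$ this gives the displayed equation. Because the conditional law of $\Wt^{(b)}$ given the environment is Wiener measure irrespective of the environment, $\Wt^{(b)}$ is in fact independent of $(W^{(e)}_t)_{t\ge0}$, hence of $(S_t)_{t\ge0}$.

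The second step is It\^o's formula for $Z_t=G_t e^{S_t}$. Using $dS_t=\al\,dt+\sigma_e\,dW^{(e)}_t$, $d\langle S\rangle_t=\sigma_e^2\,dt$, $d\langle G\rangle_t=\sigma_b^2 e^{-S_t}G_t\,dt$ and $d\langle G,S\rangle_t=0$ (the driving Brownian motions $\Wt^{(b)}$ and $W^{(e)}$ being independent), one obtains
\begin{align*}
  dZ_t
  &=e^{S_t}\,dG_t+G_t e^{S_t}\,dS_t+\tfrac12 G_t e^{S_t}\,d\langle S\rangle_t\\
  &=\sqrt{\sigma_b^2 e^{S_t}G_t}\,d\Wt^{(b)}_t+Z_t\,dS_t+\tfrac12\sigma_e^2 Z_t\,dt
   =\sqrt{\sigma_b^2 Z_t}\,d\Wt^{(b)}_t+Z_t\,dS_t+\tfrac12\sigma_e^2 Z_t\,dt,
\end{align*}
where we used $e^{S_t}\sqrt{\sigma_b^2 e^{-S_t}G_t}=\sqrt{\sigma_b^2 e^{S_t}G_t}$ and $e^{S_t}G_t=Z_t$. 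Together with $dS_t=\al\,dt+\sigma_e\,dW^{(e)}_t$ and the independence of $\Wt^{(b)}$ and $W^{(e)}$, these are exactly the equations~\eqref{eq:BDRE} with $(W^{(b)},W^{(e)})$ replaced by $(\Wt^{(b)},W^{(e)})$ and initial value $Z_0=F_0$; hence $(Z_t,S_t)_{t\ge0}$ is a weak solution of~\eqref{eq:BDRE}, and since pathwise uniqueness holds for~\eqref{eq:BDRE} it is a version of the BDRE.

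I expect the main obstacle to be the bookkeeping in the first step: one must exhibit a filtration in which $\Wt^{(b)}$ is a Brownian motion while $S$ remains a continuous semimartingale with the stated drift and variance, and one must justify $\langle G,S\rangle\equiv0$, i.e.\ that the Brownian motion extracted from $F_{\tau(\cdot)}$ is independent of the environment. Conditioning on the environment path (so that $\tau$ is deterministic) makes this transparent; alternatively one can invoke a time-change result for stochastic integrals against a random but adapted $C^1$ clock directly. A minor point still to address is that critical Feller's diffusion is absorbed at $0$ in finite time, but since all coefficients in~\eqref{eq:BDRE} vanish at $Z=0$ and $\tau(t)<\infty$ for every finite $t$, the identities above continue to hold on all of $[0,\infty)$.
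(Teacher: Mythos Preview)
Your proof is correct and follows essentially the same route as the paper's own argument (given as the special case $\theta=0$ of Lemma~\ref{l:time.change.immigration}): a random time change of the Feller diffusion followed by It\^o's formula and identification of a new Brownian motion driving the $\sqrt{\sigma_b^2 Z_t}$ term. The only cosmetic difference is in how this new Brownian motion is produced: the paper writes down $W_t:=\int_0^t (\tau'(s))^{-1/2}\,dW^{(b)}_{\tau(s)}$ directly and verifies it is a standard Brownian motion via L\'evy's characterisation, whereas you condition on the environment first and then invoke the martingale representation theorem to extract $\Wt^{(b)}$. Both routes are standard and equivalent; your version is a bit more explicit about the filtration and independence bookkeeping, which the paper leaves implicit.
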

\noindent
Due to this property, many results on Feller's branching diffusion
carry over to the BDRE~\eqref{eq:BDRE}. For example,
$(Z_te^{-S_t})_{t\geq 0}$ is a time-changed Feller branching diffusion
and is therefore infinitely divisible.
Another simple implication of Proposition~\ref{p:time.change} is an explicit formula 
for the Laplace transform
of the BDRE~\eqref{eq:BDRE} conditioned on the environment,
which has not been reported yet.
We agree on the convention that
\begin{equation}
  \frac{c}{0}:=\begin{cases}
                 \infty&\text{if }c\in(0,\infty]\\
                 0&\text{if }c=0
               \end{cases} \ ,
  \quad
  \frac{c}{\infty}:=0 \text{ for }c\in[0,\infty)
  \quad\text{and that }0\cdot\infty=0.
\end{equation}
Throughout the paper, the notation $\P^{z}$ and $\E^{z}$ refers to the starting
point of the involved process, e.g. $\P^z(Z_t\in\cdot):=\P(Z_t\in\cdot|Z_0=z)$
for $z\in[0,\infty)$
or  $\P^{(z,s)}((Z_t,S_t)\in\cdot):=\P((Z_t,S_t)\in\cdot|(Z_0,S_0)=(z,s))$
for $z\in[0,\infty)$ and $s\in\R$.
\begin{corollary} \label{c:Laplace.transform}
  Assume $\al\in\R$, $\sigma_b\in(0,\infty)$ and $\sigma_e\in[0,\infty)$.
  Let $(Z_t,S_t)_{t\geq0}$ be the strong solution of~\eqref{eq:BDRE}.
  Then we have that
  \begin{equation}  \begin{split}  \label{eq:Laplace.transform}
      \E^{(z,0)}\Bigl[ \exp\big(-\lambda Z_t\big)|\rob{S_s}_{s\leq t}\Bigr]
    =
    \exp\bigg(-\frac{z}{\int_0^t \frac{\sigma_b^2}{2}
         \exp\big(-S_s \big) ds
         + \frac{1}{\lambda} \exp(-S_t)
         }
         \bigg)
    \quad
  \end{split}     \end{equation}
  for all $t,z,\ld\in[0,\infty)$ almost surely.
\end{corollary}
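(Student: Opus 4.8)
The plan is to read the conditional Laplace transform directly off the time-change representation in Proposition~\ref{p:time.change}, feeding in the classical Laplace transform of Feller's branching diffusion. In the notation of that proposition, start the auxiliary process from $F_0=z$; since $S_0=0$ and $\tau(0)=0$, the pair $\bigl(F_{\tau(t)}e^{S_t},S_t\bigr)_{t\ge0}$ is then a version of the BDRE~\eqref{eq:BDRE} started at $(z,0)$. The crucial structural point is that $(F_t)_{t\ge0}$ is driven by $W^{(b)}$ and $(S_t)_{t\ge0}$ by the independent Brownian motion $W^{(e)}$, so $(F_t)_{t\ge0}$ is independent of the entire path $(S_s)_{s\ge0}$. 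Conditioning first on $\sigma\bigl((S_s)_{s\ge0}\bigr)$, the quantities $\tau(t)=\int_0^t e^{-S_s}\sigma_b^2\,ds$ and $S_t$ become constants, and by independence
\[
 \E\bigl[\exp(-\lambda F_{\tau(t)}e^{S_t})\,\big|\,(S_s)_{s\ge0}\bigr]
 = \E^z\bigl[\exp(-\mu F_a)\bigr]\Big|_{a=\tau(t),\,\mu=\lambda e^{S_t}};
\]
since $\tau(t)$ and $S_t$ are measurable with respect to $(S_s)_{s\le t}$, the right-hand side is also a version of $\E^{(z,0)}\bigl[\exp(-\lambda Z_t)\mid(S_s)_{s\le t}\bigr]$ for this version of the BDRE.

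Next I would insert the Laplace transform of Feller's branching diffusion $dF_t=\sqrt{F_t}\,dW_t^{(b)}$: for $x,\mu,a\in[0,\infty)$ one has $\E^x[\exp(-\mu F_a)]=\exp(-x\,u_a(\mu))$, where $u_\cdot(\mu)$ solves the Riccati equation $\partial_a u_a=-\tfrac12 u_a^2$ with $u_0=\mu$, hence $u_a(\mu)=1/(\mu^{-1}+a/2)$. This is standard (via the generator $xf''(x)/2$, or as the $\sigma_e\to0$ specialisation of~\eqref{eq:BDRE}). Substituting $x=z$, $a=\tau(t)=\int_0^t e^{-S_s}\sigma_b^2\,ds$ and $\mu=\lambda e^{S_t}$ gives
\[
 \E^z\bigl[\exp(-\lambda e^{S_t}F_{\tau(t)})\bigr]
 = \exp\!\left(-\frac{z}{\frac{e^{-S_t}}{\lambda}+\frac12\int_0^t e^{-S_s}\sigma_b^2\,ds}\right),
\]
which is precisely~\eqref{eq:Laplace.transform}. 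The boundary cases are absorbed by the stated conventions: for $\lambda=0$ one has $\mu^{-1}=\infty$, the denominator is $+\infty$, and both sides equal $1$; for $z=0$ both sides equal $1$; and $\sigma_e=0$ is the degenerate case $S_t=\alpha t$.

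It remains to transfer the identity from the particular version produced by Proposition~\ref{p:time.change} to the strong solution $(Z_t,S_t)_{t\ge0}$ of~\eqref{eq:BDRE}, and I expect this measure-theoretic step to be the only one requiring genuine care. The point is that a regular conditional distribution of $Z_t$ given $(S_s)_{s\le t}$ is, as a measurable functional of the environment path, determined $\mathcal L\bigl((S_s)_{s\le t}\bigr)$-almost surely by the joint law of $\bigl(Z_t,(S_s)_{s\le t}\bigr)$; since~\eqref{eq:BDRE} has a unique strong solution, this joint law is the same for the time-changed representation and for the strong solution, so the two conditional Laplace transforms agree almost surely. Stating this cleanly (rather than just invoking "uniqueness in law") is the main thing to get right, because the corollary asserts~\eqref{eq:Laplace.transform} for the strong solution and only up to a null set.
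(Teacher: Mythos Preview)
Your proposal is correct and follows essentially the same route as the paper: invoke the time-change representation of Proposition~\ref{p:time.change}, use independence of $(F_t)_{t\ge0}$ and $(S_t)_{t\ge0}$ to freeze $\tau(t)$ and $S_t$ when conditioning, and substitute into the Feller Laplace transform $\E^z[\exp(-\lambda F_t)]=\exp\bigl(-z/(\tfrac12 t+\tfrac1\lambda)\bigr)$. The paper's proof is terser---it does not spell out the Riccati derivation, the boundary cases, or the weak-to-strong transfer you raise---but the argument is the same.
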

\noindent
The proof is deferred to Section~\ref{sec:Laplace.transform}.
If $\sigma_e=0$, then~\eqref{eq:Laplace.transform}
is just the Laplace transform of
Feller's branching diffusion with criticality parameter $\al$ and
branching rate $\sigma_b^2$.

The simplicity of the right-hand side of~\eqref{eq:Laplace.transform}
derives from the fact that the distribution of the integral of
the squared geometric Brownian motion with drift $\beta\in\R$,
\begin{align}  \label{eq:At}
 A_t^{(\beta)}:=\int_{0}^t \exp\big(2(\beta s+W^{(e)}_s)\big)ds, \quad t\in[0,\infty),
\end{align}
is well understood
(see e.g.~\cite{MatsumotoYor2003,Dufresne2001,Yor1992,ComtetEtAl1998}).
Even more, the density of the joint distribution
of $(A_t^{(\beta)},W^{(e)}_t+\beta t)$ is known rather explicitly
for every $t\in(0,\infty)$.
Define
\begin{equation} \label{eq:joint.density}
  a_t(x,u)\,du:=\P(A_t^{(\beta)}\in du|W_t^{(e)}+\beta t=x)
\end{equation}
for $t,u\in(0,\infty)$ and $x\in\R$.
Then the density
of $(A_t^{(\beta)},W^{(e)}_t+\beta t)$
satisfies that
\begin{equation}  \label{eq:joint.density.explicit}
  \frac{1}{\sqrt{2\pi t}}\exp\roB{-\frac{x^2}{2t}}a_t(x,u)
  =\frac{1}{u}\exp\roB{-\frac{1}{2u}(1+e^{2x})}\theta_{e^x/u}(t)
\end{equation}
where
\begin{equation}
  \theta_r(t)=\frac{r}{\sqrt{2\pi^3 t}}
       \exp\roB{\frac{\pi^2}{2t}}
     \int_0^\infty \exp\rob{-\frac{y^2}{2t}}\exp\rob{-r\,\cosh(y)}
     \,\sinh(y)\sin\roB{\frac{\pi y}{t}}\,dy
\end{equation}
for all $t,u,r\in(0,\infty)$ and $x\in\R$,
see Proposition $2$ of Yor (1992)\nocite{Yor1992}.
Using the explicit formula~\eqref{eq:joint.density.explicit} allows to answer
rather fine questions by elementary (but sometimes nontrivial)
calculations.
Thereby the BDRE~\eqref{eq:BDRE} becomes one of the most tractable
processes in the class of BPREs.

The following corollary of Corollary~\ref{c:Laplace.transform} provides
an explicit expression for the survival probability.
Define the parameter $\beta\in[-\infty,\infty]$
and the function $f\colon[0,\infty]\to[0,1]$ through
\begin{equation} \label{deff}
   \beta:= -\frac{2\alpha}{\sigma^2_e}
  \quad\text{and}\quad
  f(x):=1-\exp\Big(-\frac{\sigma_e^2}{\sigma_b^2}\cdot x\Big),\qquad
  x\in[0,\infty],
\end{equation}
if $\sigma_e\neq0$ and through $\beta=0$ and $f\equiv 0$ if $\sigma_e=0$.
\enlargethispage{\baselineskip}
\begin{corollary} \label{c:survival_prob}
  Assume $\al\in\R$, $\sigma_b\in(0,\infty)$ and $\sigma_e\in[0,\infty)$.
  Let $(Z_t,S_t)_{t\geq0}$ be the strong solution of~\eqref{eq:BDRE}.
  Then
 \begin{equation}  \begin{split}
 \P^{(z,0)}\Bigl(Z_t>0\,\big|\,(S_s)_{s\leq t}\Bigr)
   &=1-\exp\biggl(-\frac{z}{\int_0^t\frac{\sigma_b^2}{2}
               \exp\bigl(-S_s\bigr)ds}\biggr)
 \end{split} \label{survprob}    \end{equation}
 for every $t\in(0,\infty)$ and every $z\in[0,\infty)$ almost surely.
 If $\beta>-1$ and $\sigma_e>0$, then
 \begin{equation} \label{survneu} 
 \P^z(Z_t>0)
  =\E\Big[f\Big(\frac{z}{2A^{(\beta)}_{t\sigma_e^2/4}}\Big)\Big]
  =\int_0^{\infty}f(za) p_{t\sigma_e^2/4,\beta}(a)\, da
 \end{equation}
 for every $t\in(0,\infty)$ and every $z\in[0,\infty)$ where
 the density function of $1/(2A_v^{(\beta)})$ satisfies
  \begin{equation}  \begin{split} \label{densitity_general}
    \lefteqn{
    p_{v,\beta}(a)da
    := \P\Big(\frac{1}{2 A^{(\beta)}_v} \in da\Big)
    }\\
    &= \frac{e^{-\beta^2 v/2} e^{\pi^2/2v}}{\sqrt{2}\pi^2 \sqrt{v}}
       \Gamma\Big(\frac{\beta+2}{2}\Big) e^{-a} a^{-(\beta+1)/2}
       \int_0^\infty\int_0^\infty e^{-\xi^2/2v}  s^{(\beta-1)/2}e^{-as}
       \frac{\sinh(\xi)\cosh(\xi)\sin(\pi \xi/v)}{(s+(\cosh(\xi))^2)^{\frac{\beta+2}{2}}}\, d\xi\, ds\,da
  \end{split}     \end{equation}
  on $(0,\infty)$ for every $v\in(0,\infty)$.
\end{corollary}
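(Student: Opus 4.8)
The plan is to deduce the three displays in order: \eqref{survprob} by letting $\ld\to\infty$ in Corollary~\ref{c:Laplace.transform}; the two identities in \eqref{survneu} by integrating over the environment and rescaling the associated Brownian motion to the exponential functional $A^{(\beta)}$; and \eqref{densitity_general} by integrating the Brownian endpoint out of Yor's explicit joint density \eqref{eq:joint.density.explicit} and transforming.

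For \eqref{survprob}, fix $t>0$ and $z\ge0$. Since $\exp(-\ld Z_t)\downarrow\1_{\{Z_t=0\}}$ as $\ld\to\infty$, dominated convergence along a sequence $\ld\to\infty$ gives $\E^{(z,0)}[\exp(-\ld Z_t)\,|\,(S_s)_{s\le t}]\to\P^{(z,0)}(Z_t=0\,|\,(S_s)_{s\le t})$ almost surely. On the right-hand side of \eqref{eq:Laplace.transform} the summand $\tfrac1\ld\exp(-S_t)$ tends to $0$, while $\int_0^t\tfrac{\sigma_b^2}{2}\exp(-S_s)\,ds$ is strictly positive and finite a.s.\ (its integrand is continuous on $[0,t]$), so the right-hand side converges to $\exp\bigl(-z/\int_0^t\tfrac{\sigma_b^2}{2}\exp(-S_s)\,ds\bigr)$. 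Passing to complements yields \eqref{survprob}.

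For \eqref{survneu}, take expectations in \eqref{survprob}, recalling $S_s=\al s+\sigma_e W^{(e)}_s$. Substituting $s=4r/\sigma_e^2$ and using Brownian scaling together with the reflection symmetry $-W^{(e)}\eqd W^{(e)}$ gives
\[
  \int_0^t\tfrac{\sigma_b^2}{2}\exp(-S_s)\,ds\ \eqd\ \tfrac{2\sigma_b^2}{\sigma_e^2}\,A^{(\beta)}_{t\sigma_e^2/4},\qquad \beta=-\tfrac{2\al}{\sigma_e^2},
\]
with $A^{(\beta)}$ as in \eqref{eq:At}. Hence the right-hand side of \eqref{survprob} has the law of $1-\exp\bigl(-\tfrac{\sigma_e^2}{\sigma_b^2}\cdot\tfrac{z}{2A^{(\beta)}_{t\sigma_e^2/4}}\bigr)=f\bigl(z/(2A^{(\beta)}_{t\sigma_e^2/4})\bigr)$ by the definition \eqref{deff} of $f$; taking expectations gives the first equality of \eqref{survneu} (this step uses only $\sigma_e>0$), and the second equality is the definition of $p_{v,\beta}$ as the law of $1/(2A^{(\beta)}_v)$ applied to the bounded measurable map $a\mapsto f(za)$.

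For \eqref{densitity_general}, note that conditionally on its endpoint the drifted motion $(\al s+\sigma_e W^{(e)}_s)$ is a Brownian bridge whose law is independent of $\beta$, so the conditional density $a_t(x,u)$ of \eqref{eq:joint.density} does not depend on $\beta$; multiplying \eqref{eq:joint.density.explicit} (with $t=v$) by the Girsanov factor $\exp(\beta x-\beta^2 v/2)$ therefore yields the joint density of $(A^{(\beta)}_v,\,W^{(e)}_v+\beta v)$. Integrating over $x\in\R$ gives the marginal density $g_v$ of $A^{(\beta)}_v$; the substitution $s=e^{2x}$ converts the $x$-integral into an integral over $s\in(0,\infty)$, and inserting the integral representation of $\theta_r$ supplies the inner $\xi$-integral. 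A further elementary substitution and a standard Beta/Gamma integral (of the form $\Gamma(\tfrac{\beta+2}{2})(s+\cosh^2\xi)^{-(\beta+2)/2}=\int_0^\infty\rho^{\beta/2}e^{-\rho(s+\cosh^2\xi)}\,d\rho$, used to reorganize the remaining exponential into the displayed power) bring the integrand into shape, and the change of variables $a=1/(2u)$ (so $p_{v,\beta}(a)=g_v(1/(2a))/(2a^2)$) gives \eqref{densitity_general}. The main obstacle is this last step: tracking all constants (in particular pinning down the prefactor $(\sqrt2\,\pi^2\sqrt v)^{-1}$ and the exact placement of the $\Gamma$-factor) and justifying the interchanges of integration by Tonelli after separating signs. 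The hypothesis $\beta>-1$ enters precisely here, as the integrability threshold of the iterated integral in \eqref{densitity_general} (convergence of the $s$-integral near $s=0$, equivalently of the $x$-integral as $x\to-\infty$); the first two displays hold without it.
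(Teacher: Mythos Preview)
Your arguments for \eqref{survprob} and for the first equality in \eqref{survneu} are correct and essentially identical to the paper's: let $\ld\to\infty$ in the Laplace transform, then rescale the Brownian motion via $u=\sigma_e^2 s/4$ together with reflection to identify the exponential functional $A^{(\beta)}$.

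The difference lies in \eqref{densitity_general}. The paper does not derive this density at all: it simply cites equation (2.5) of Matsumoto and Yor (2003), noting that this reference requires $\beta>-1$. You instead attempt to rederive the density from Yor's joint law \eqref{eq:joint.density.explicit} by Girsanov, marginalisation over the endpoint, and a sequence of substitutions culminating in a Gamma-integral trick. This is a reasonable program and would make the paper more self-contained, but as you yourself flag, it is not actually carried out: the constants are not tracked, the Fubini/Tonelli justification is left open (the integrand in $\theta_r$ involves $\sin(\pi\xi/v)$, so positivity fails and one must argue absolute integrability separately), and the precise sequence of substitutions that produces the displayed form is only gestured at. So for \eqref{densitity_general} your proposal is a sketch of a derivation where the paper has a citation; to complete it you would need to fill in exactly the bookkeeping you identify as the obstacle, or else cite Matsumoto--Yor as the paper does.
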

\noindent
The proof is deferred to Section~\ref{sec:Laplace.transform}.

The asymptotic behavior of the survival probability strongly depends on $\al$.
As in the case of classical branching processes, the survival probability
stays positive, converges to zero polynomially fast or
converges to zero exponentially fast according to whether the
process is \emph{supercritical} ($\al>0$), \emph{critical} ($\al=0$)
or \emph{subcritical} ($\al<0$), respectively.
Now in case of a random environment it is known for BPREs that there
is another phase transition in the subcritical regime.
For BDREs this phase transition turns out to occur at $\al=-\sigma_e^2$.
We adopt the standard notation of the literature on BPREs
for the different regimes and say that
the BDRE is   \emph{weakly subcritical} if
$-\sigma_e^2<\al<0$,
\emph{intermediately subcritical} if
$\al=-\sigma_e^2$
and
\emph{strongly subcritical} if
$\al<-\sigma_e^2$.
The following theorem establishes the asymptotic behavior
of the survival probability of BDREs
including explicit expressions for the limiting constants.
For the rest of this article, we concentrate on the subcritical regime;
the supercritical regime is then subject of the forthcoming
paper~\cite{Hutzenthaler2011ECP}.

\begin{theorem}\label{asymptotic_survival}
  Assume $\al\in\R$ and $\sigma_b,\sigma_e\in(0,\infty)$.
  Let $(Z_t,S_t)_{t\geq0}$ be the strong solution of~\eqref{eq:BDRE}.
  Then we have that
 \begin{alignat}{2}
 \limt\,\P^z\Bigl(Z_t>0\Bigr)
     &=1-\left(1+\frac{\sigma_e^2}{\sigma_b^2}\cdot z\right)^{-\frac{2\al}{\sigma_e^2}}>0
     &\qquad\text{if }&\al>0 \label{thsup}\\
 \limt\sqrt{t}\,\P^z\Bigl(Z_t>0\Bigr)
     &=\frac{\sqrt{2}}{\sqrt{\pi}\sigma_e} \log\Big(1+\frac{\sigma_e^2}{\sigma_b^2} \cdot z\Big)>0
     &\text{if }&\al=0\label{thcrit}\\
 \limt\sqrt{t}^3 e^{\frac{\al^2}{2\sigma_e^2}t}\,\P^z\Bigl(Z_t>0\Bigr)
     &=\frac{8}{\sigma_e^3}\int_0^\infty f(za)\phi_\beta(a)\,da>0
     &\text{if }&\frac{\al}{\sigma_e^2}\in(-1,0)\label{thweak}\\
 \limt\sqrt{t}\, e^{\frac{\sigma_e^2}{2}t}\,\P^z\Bigl(Z_t>0\Bigr)
     &=z\,\frac{\sqrt{2}\sigma_e}{\sqrt{\pi}\sigma_b^2}>0
     &\text{if }&\frac{\al}{\sigma_e^2}=-1\label{thinter}\\
 \limt e^{-\rob{\al+\frac{\sigma_e^2}{2}}t}\,\P^z\Bigl(Z_t>0\Bigr)
     &=z\,2\frac{-\al-\sigma_e^2}{\sigma_b^2}>0
     &\text{if }&\frac{\al}{\sigma_e^2}<-1\label{thstrong}
 \end{alignat}
 for every $z\in(0,\infty)$,
 where $\phi_\beta\colon(0,\infty)\to(0,\infty)$ is defined as
 \begin{equation} \label{defphibeta} \begin{split}
    \phi_\beta(a)&=\int_0^\infty \int_0^\infty \frac{1}{\sqrt{2}\pi}\Gamma\Big(\frac{\beta+2}{2}\Big)
             e^{-a} a^{-\beta/2} u^{(\beta-1)/2}e^{-u} \frac{\sinh(\xi)\cosh(\xi) \xi}{(u+a(\cosh(\xi))^2)^{\frac{\beta+2}{2}}}d\xi\,du
 \end{split}    \end{equation}
 for every $a\in(0,\infty)$.
\end{theorem}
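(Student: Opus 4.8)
The plan is to reduce the whole statement to the explicit formulas of Corollary~\ref{c:survival_prob}. For $\sigma_e>0$ and $\beta=-2\al/\sigma_e^2>-1$ — which covers the critical regime and all three subcritical regimes — formula~\eqref{survneu} writes $\P^z(Z_t>0)=\int_0^\infty f(za)\,p_{v,\beta}(a)\,da$ with $v=t\sigma_e^2/4$ and $p_{v,\beta}$ given explicitly in~\eqref{densitity_general}, so everything becomes an asymptotic analysis of this integral as $v\to\infty$, together with a separate argument for $\al>0$ based on~\eqref{survprob}. Two facts are used repeatedly: Brownian scaling and symmetry, which give $\int_0^t e^{-S_s}\,ds\eqd\tfrac{4}{\sigma_e^2}A^{(\beta)}_{t\sigma_e^2/4}$ for the associated Brownian motion $S_s=\al s+\sigma_e W^{(e)}_s$; and Dufresne's identity $A^{(-\mu)}_\infty\eqd(2\gamma_\mu)^{-1}$ for $\mu>0$, with $\gamma_\mu$ a $\Gamma(\mu,1)$ variable, together with $\E[e^{-\lambda\gamma_\mu}]=(1+\lambda)^{-\mu}$. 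For $\al>0$ one has $S_s\to+\infty$ a.s., so $\int_0^t\tfrac{\sigma_b^2}{2}e^{-S_s}\,ds$ increases to a finite limit; bounded convergence in~\eqref{survprob}, scaling, and Dufresne (note $\beta<0$, so $A^{(\beta)}_\infty<\infty$) identify $\lim_t\P^z(Z_t>0)=\E\bigl[1-e^{-(\sigma_e^2 z/\sigma_b^2)\gamma_{-\beta}}\bigr]=1-\bigl(1+\tfrac{\sigma_e^2}{\sigma_b^2}z\bigr)^{\beta}$, which is~\eqref{thsup}.

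The core of the argument is the behaviour of $p_{v,\beta}(a)$ from~\eqref{densitity_general} as $v\to\infty$. The prefactor contains $e^{-\beta^2 v/2}$ explicitly, while $e^{\pi^2/2v}\to1$, $e^{-\xi^2/2v}\to1$ and $\sin(\pi\xi/v)\sim\pi\xi/v$. For $0<\beta<2$ one replaces $\sin(\pi\xi/v)$ by $\pi\xi/v$ and passes to the limit inside the $(\xi,s)$-integral — legitimate because the resulting integrand decays like $\xi e^{-\beta\xi}$ as $\xi\to\infty$, which is integrable exactly when $\beta>0$ — and the substitution $s=u/a$ turns the limit into $\phi_\beta(a)$ of~\eqref{defphibeta}; this gives $v^{3/2}e^{\beta^2 v/2}p_{v,\beta}(a)\to\phi_\beta(a)$ pointwise, together with the $v$-uniform bound $v^{3/2}e^{\beta^2 v/2}p_{v,\beta}(a)\le e^{\pi^2/2}\phi_\beta(a)$. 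For $\beta=0$ the limiting integrand no longer decays in $\xi$, so the Gaussian cut-off forces $\xi$ of order $\sqrt v$; rescaling $\xi=\sqrt v\,\eta$ produces an extra $\sqrt v$ and yields $\sqrt v\,p_{v,0}(a)\to\tfrac{1}{\sqrt{2\pi}}e^{-a}/a$, with dominating majorant $\mathrm{const}\cdot e^{-a}/a$. In the critical case ($\al=\beta=0$), since $0\le f(za)\le\min\{1,\tfrac{\sigma_e^2}{\sigma_b^2}za\}$, dominated convergence gives $\sqrt v\,\P^z(Z_t>0)\to\tfrac{1}{\sqrt{2\pi}}\int_0^\infty f(za)e^{-a}a^{-1}\,da$, and this last integral is a Frullani integral equal to $\tfrac{1}{\sqrt{2\pi}}\log\bigl(1+\tfrac{\sigma_e^2}{\sigma_b^2}z\bigr)$; substituting $v=t\sigma_e^2/4$ gives~\eqref{thcrit}. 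The same density asymptotics yield $\E[1/A^{(0)}_v]=2\int_0^\infty a\,p_{v,0}(a)\,da\sim\sqrt{2/(\pi v)}$, used below.

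In the weakly subcritical regime ($-\sigma_e^2<\al<0$, $0<\beta<2$) one checks that $\phi_\beta(a)$ blows up like a constant times $a^{-\beta/2-1}\log(1/a)$ as $a\downarrow0$ and decays exponentially as $a\to\infty$, so that $\int_0^\infty f(za)\phi_\beta(a)\,da<\infty$ precisely because $\beta<2$; dominated convergence (with the uniform bound above) then gives $v^{3/2}e^{\beta^2 v/2}\P^z(Z_t>0)\to\int_0^\infty f(za)\phi_\beta(a)\,da$, and rewriting $v^{-3/2}=8\sigma_e^{-3}t^{-3/2}$ and $\beta^2 v/2=\al^2 t/(2\sigma_e^2)$ gives~\eqref{thweak}. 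For $\beta\ge2$ the integral $\int_0^\infty f(za)\phi_\beta(a)\,da$ diverges, so the dominant contribution to survival comes from the event that $J_t:=\int_0^t\tfrac{\sigma_b^2}{2}e^{-S_s}\,ds$ is large; there $1-e^{-z/J_t}\sim z/J_t$, so $\P^z(Z_t>0)=z\,\E[1/J_t]\,(1+o(1))$, the error being absorbed by a second-moment estimate $\E[1/J_t^2]=o(\E[1/J_t])$ (which again follows from the density asymptotics, as $\E[(A^{(\beta)}_v)^{-2}]$ turns out of strictly smaller order than $\E[(A^{(\beta)}_v)^{-1}]$). To evaluate $\E[1/J_t]$ I would time-reverse $S$ on $[0,t]$ and then tilt by $e^{\sigma_e W^{(e)}_t-\sigma_e^2 t/2}$, obtaining
\begin{equation*}
  \E[1/J_t]=\frac{2}{\sigma_b^2}\,e^{(\al+\sigma_e^2/2)t}\;\widetilde\E\Bigl[\,\Bigl(\textstyle\int_0^t e^{(\al+\sigma_e^2)u+\sigma_e\widetilde W_u}\,du\Bigr)^{-1}\Bigr]
\end{equation*}
with $\widetilde W$ a Brownian motion. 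For $\al<-\sigma_e^2$ the inner integral increases to a finite limit, and scaling plus Dufresne give $\widetilde\E[(\int_0^\infty\cdots)^{-1}]=-(\al+\sigma_e^2)$, which is~\eqref{thstrong}; for $\al=-\sigma_e^2$ the drift inside vanishes, the inner integral equals $\tfrac{4}{\sigma_e^2}A^{(0)}_{t\sigma_e^2/4}$ in law, and $\E[1/A^{(0)}_v]\sim\sqrt{2/(\pi v)}$ gives~\eqref{thinter}.

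The main obstacle is the justification of the limit--integral interchanges: one needs $v$-uniform integrable majorants for $\int_0^\infty f(za)p_{v,\beta}(a)\,da$ (and for the $A^{(0)}_v$ moments) in every regime, and these rest on pinning down the small-$a$ behaviour $\phi_\beta(a)\asymp a^{-\beta/2-1}\log(1/a)$ — itself a somewhat intricate Laplace-type asymptotic of the inner double integral in~\eqref{densitity_general}. In the intermediately and strongly subcritical cases one additionally needs the second-moment bound $\E[1/J_t^2]=o(\E[1/J_t])$ and a uniform-integrability argument for $\widetilde\E[(\int_0^t\cdots)^{-1}]\to\widetilde\E[(\int_0^\infty\cdots)^{-1}]$. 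Finally, it is the borderline nature of $\int_0^\infty f(za)\phi_\beta(a)\,da$ at $\beta=2$ — finite for $\beta<2$, infinite for $\beta\ge2$ — that is responsible for the jump in the polynomial rate from $t^{-3/2}$ (weakly subcritical) to $t^{-1/2}$ (intermediately subcritical).
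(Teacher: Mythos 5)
Your proposal is correct in all essentials and follows the same overall route as the paper: start from the explicit conditional survival probability of Corollary~\ref{c:survival_prob}, reduce the four near-critical regimes to asymptotics of $\int_0^\infty f(za)p_{v,\beta}(a)\,da$, and handle $\alpha>0$ separately by bounded convergence plus Dufresne's identity. The few places where you diverge from the paper are presentational rather than substantive. For the critical case the paper does not rescale $\xi$ in the general density~\eqref{densitity_general} but instead substitutes the simpler Dufresne (2001) expression for the density of $1/(2A_t^{(0)})$ (Lemma~\ref{l:convergencecrit}); your $\xi=\sqrt{v}\,\eta$ rescaling gets the same limit but the dominated-convergence check is messier because of the $\sin(\pi\xi/v)$ oscillation in the far tail. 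For the intermediate and strongly subcritical regimes, your direct time-reversal-plus-Girsanov manipulation of $\E[1/J_t]$ is precisely the content of the paper's Lemma~\ref{lem_hilf_intstrong}, written at the level of $A_t^{(\gamma)}$ rather than of $J_t$ (I verified your tilt formula: the drift inside becomes $(\alpha+\sigma_e^2)u+\sigma_e W_u$, and Dufresne then gives the factor $-(\alpha+\sigma_e^2)$). The linearization $1-e^{-z/J_t}\approx z/J_t$ you invoke, with the second-moment control $\E[1/J_t^2]=o(\E[1/J_t])$, is exactly the paper's Lemma~\ref{hilf}, and the second-moment bound is what Lemma~\ref{lem_hilf_intstrong} produces by splitting the integral over $[0,t/2]$ and $[t/2,t]$. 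One genuinely useful observation you add is that the divergence of $\int_0^\infty f(za)\phi_\beta(a)\,da$ at $\beta=2$ explains why the polynomial rate drops from $t^{-3/2}$ to $t^{-1/2}$ at the intermediate boundary; the paper never states this explicitly. The one place where your write-up is thinner than the paper is the verification that the dominating majorant $e^{\pi^2/2}\phi_\beta(a)$ actually satisfies $\int_0^\infty f(za)\phi_\beta(a)\,da<\infty$ for $\beta\in(0,2)$: you assert the small-$a$ behaviour $\phi_\beta(a)\asymp a^{-\beta/2-1}\log(1/a)$ but do not carry out the $\eta=\sqrt{a}\cosh\xi$ substitution that proves it (the paper does this in the proof of Lemma~\ref{l:vartheta}, and a more elementary integrability bound in the proof of Lemma~\ref{l:convergence}); filling that in would complete the argument.
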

\noindent
The proof is deferred to Section~\ref{sec:asymptotic.of.the.survival.probability}.

Let us compare the convergence rate of the survival probability
with the classical case $\sigma_e=0$ of Feller's branching diffusion.
Pars pro toto we discuss the critical regime $\al=0$.
In that case, the survival probability of Feller's branching diffusion
is of order $O\rob{\tfrac{1}{t}}$ whereas it is of order $O\rob{\tfrac{1}{\sqrt{t}}}$
if $\sigma_e>0$ as $t\to\infty$.
So a branching process in random environment has a higher probability to survive.
The reason for this is that there is a positive probability
of experiencing a long supercritical phase. More precisely, for every $\eps>0$, the
event that the critical Brownian motion $(S_{s})_{s\geq0}$ stays above $\eps$
from time $\eps$
until time $t$ is of order $O\rob{\tfrac{1}{\sqrt{t}}}$ as $t\to\infty$.
On this event the branching process is supercritical and survives with positive
probability. This explains the slower convergence order
$O\rob{\tfrac{1}{\sqrt{t}}}$ as $t\to\infty$.

Note that the expectation $\E^z[Z_t]=z\exp\rob{(\al+\tfrac{\sigma_e^2}{2})t}$, $z\in(0,\infty)$,
changes its qualitative behavior as $t\to\infty$ at $\al=-\tfrac{\sigma_e^2}{2}$.
The phase transition for the survival probability, however, is
at $\al=-\sigma_e^2$. Here is an heuristic.
If the associated Brownian motion (drift $\al<0$) is negative for almost all of the
time, then we expect the BDRE to behave like Feller's branching diffusion.
In that case we expect that $\P^z(Z_t>0)\sim\text{\itshape const}\cdot\E^z[Z_t]=\text{\itshape const}\cdot z\exp\rob{(\al+\tfrac{\sigma_e^2}{2})t}$,
$z\in(0,\infty)$,
as $t\to\infty$.
This gives indeed the exponential decay rate in the strongly subcritical regime.
However, the associated Brownian motion might be positive until time $t>0$.
On this event the BDRE is supercritical and survives with positive probability.
The probability of this event decreases like 
$\exp\rob{-\frac{\al^2}{2\sigma_e^2}t}$ (times polynomial terms)
as $t\to\infty$. This exponential decay rate follows from an application of
the Cameron-Martin-Girsanov theorem
(e.g.~Theorem IV.38.5 in~\cite{RogersWilliams2000b}).
This gives the exponential decay rate in the weakly subcritical regime.
Now the phase transition for the survival probability
occurs when these two exponential decay rates
$\al+\tfrac{\sigma_e^2}{2}$ and $-\frac{\al^2}{2\sigma_e^2}$ coincide,
namely at $\al=-\sigma_e^2$.


For BPREs Afanasyev (1979)\nocite{Afanasyev1979} was the first
to observe different regimes for the survival probability in the
subcritical regime. Independently hereof
Dekking (1987)\nocite{Dekking1987} rediscovered this dichotomy.
For more recent results on the speed of decay of the survival probability,
see
Corollary 1.2 of \cite{AfanasyevEtAl2005AOP} for the critical case,
Corollary 1.2 of \cite{AfanasyevEtAl2011pre} for the weakly subcritical case,
Theorem 1 of \cite{Vatutin2004} for the intermediately subcritical case
and
Theorem 1.1 of \cite{AfanasyevEtAl2005SPA} for the strongly subcritical case.
Its derivation, however, is sometimes involved and,
in general, there are no simple expressions for the limiting constants.
Only
the case of linear-fractional offspring distributions
is known to admit explicit limiting constants, see \cite{Afanasyev1980}.

Next we investigate
the event of survival in more detail
and condition the BDRE on the event of ultimate survival.
The method of conditioning a Markov process to stay nonnegative
has been applied in various situations (e.g.~\cite{BertoinDoney1994,CattiauxEtAl2009,Lambert2007}).
However, we have not found a suitable formulation for the case of
multi-dimensional diffusions.
As such a formulation is of independent interest, we include it in the following lemma.
For this, define a set $\CQ$ of functions as
\begin{equation}
  \CQ:=\left\{q\colon[0,\infty)\to[0,\infty)\Big|\lim_{t\to\infty}\frac{q(t+s)}{q(t)}=1
   \text{ for all }s\in[0,\infty)
   \right\}.
\end{equation}
Note that $q\in\CQ$ if and only if $q\circ \log$ is slowly varying at infinity;
see Galambos and Seneta (1973)\nocite{GalambosSeneta1973} for this notion.
\begin{lemma}  \label{l:conditioning.general}
  Let $d,m\in\N$ and
  let $I\subset\R^d$ and $A\subset I$ be Borel measurable sets.
  In addition let the drift vector $\mu\colon I\to\R^d$
  and the diffusion matrix $\sigma\colon I\to\R^{d\times m}$ be Borel measurable functions.
  Moreover, let $(X_t)_{t\geq0}$ be a Markov process and
  a weak solution of the
  stochastic differential equation
  \begin{equation}  \label{eq:X}
    dX_t=\mu(X_t)\,dt+\sigma(X_t)\,dW_t
  \end{equation}
  for $t\in[0,\infty)$ with initial value $X_0\in I$
  where $(W_t)_{t\geq0}$ is an $m$-dimensional standard Brownian motion.
  Assume that there exist a twice continuously differentiable
  function $\eta\colon I\to[0,\infty)$,
  a function $q\in\CQ$
  and values $\lambda,p\in[0,\infty)$
  with the following properties:
  \begin{itemize}
    \item $\lim_{t\to\infty}q(t)e^{\ld t}\,\P^x\Big(X_t\not\in A\Big)=\eta(x)$
       for all $x\in I$,
    \item  $\sup_{x\in I}\tfrac{1}{1+\|x\|^p}
       \sup_{t\in[1,\infty)}q(t)e^{\ld t}\,\P^x\Big(X_t\not\in A\Big)<\infty$,
    \item $\E^x\left[\|X_t\|^p\right]<\infty$ for all $x\in I$ and $t\in[0,\infty)$.
  \end{itemize}
  Define $\bar{I}:=\{x\in I\colon \eta(x)>0\}$.
  Then there exists a process $(\Xb_t)_{t\geq0}$ with state space
  $\bar{I}$ such that
  \begin{equation}  \label{eq:conditioning.general}
    \P^x\left(\left(X_s\right)_{s\in[0,t]}\in\bullet\,\Big|X_T\not\in A\right)
    \xrightarrow[T\ra \infty]{\text{w}}
    \P^x\left(\left(\Xb_s\right)_{s\in[0,t]}\in\bullet\right)
  \end{equation}
  for all $x\in \bar{I}$ and all $t\in[0,\infty)$,
  such that  $(\Xb_t)_{t\geq0}$
  is a weak solution of the SDE
  \begin{equation}  \label{eq:Xbar}
    d\Xb_t=
        \left(
             \sigma\sigma^{t}\frac{\nabla^{t} \eta}{\eta}
        \right)(\Xb_t)\,dt
          +\mu(\Xb_t)\,dt+\sigma(\Xb_t)\,dW_t
  \end{equation}
  for $t\in[0,\infty)$
  where $\nabla=\left(\tfrac{\del}{\del x_1},\ldots,\tfrac{\del}{\del x_d}\right)$
  and
  such that
  \begin{equation}  \label{eq:semigroup.Xbar}
    \E^x\left[g(\Xb_t)\right]=\frac{\E^x\left[\eta(X_t)g(X_t)\right]}{e^{-\ld t}\eta(x)}
  \end{equation}
  for all $x\in \bar{I}$, $t\in[0,\infty)$ and all Borel measurable functions $g\colon I\to[0,\infty)$.
\end{lemma}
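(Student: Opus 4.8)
The plan is to realise $(\Xb_t)_{t\ge0}$ as a Doob $h$-transform of $(X_t)_{t\ge0}$, with harmonic function $\eta$ and multiplicative functional $e^{\lambda t}$, and then to show that $\P^x(\,\cdot\mid X_T\not\in A)$ converges to the law of this transform as $T\to\infty$. The preliminary step is to verify that $\eta$ is $\lambda$-invariant for the semigroup of $(X_t)$. By the Markov property, $\P^x(X_{t+T}\not\in A)=\E^x\big[\P^{X_t}(X_T\not\in A)\big]$; multiplying by $q(t+T)e^{\lambda(t+T)}$ and letting $T\to\infty$, the left-hand side tends to $\eta(x)$ because $q\in\CQ$. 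In the expectation on the right, $q(t+T)e^{\lambda T}\P^{X_t}(X_T\not\in A)\to\eta(X_t)$ pointwise (writing $q(t+T)=\tfrac{q(t+T)}{q(T)}q(T)$ and using $q\in\CQ$), while for large $T$ the integrand is bounded by $2C\big(1+\|X_t\|^p\big)$ by the second hypothesis, which is $\P^x$-integrable by the third. Dominated convergence yields $\eta(x)=e^{\lambda t}\E^x[\eta(X_t)]$ for every $t\ge0$ and $x\in I$; hence, again by the Markov property, $M_t:=e^{\lambda t}\eta(X_t)$ is a nonnegative $\P^x$-martingale with $\E^x[M_t]=\eta(x)$.

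Next I would use this martingale to define $(\Xb_t)$. On $\bar I=\{\eta>0\}$ and for $x\in\bar I$, set $\frac{d\bar{\P}^x}{d\P^x}\big|_{\MCF_t}:=\frac{M_t}{\eta(x)}$. Since $M/\eta(x)$ is a genuine martingale of mean one, these measures are consistent in $t$ and extend by Kolmogorov's theorem (the underlying path space is Polish since $(X_t)$ is \cadlag) to a probability measure $\bar{\P}^x$, under which we let $(\Xb_t)$ denote the coordinate process. Identity \eqref{eq:semigroup.Xbar} is then immediate from the definition of the change of measure. Moreover $M_t>0$ holds $\bar{\P}^x$-a.s., so $\eta(\Xb_t)>0$ and the state space is indeed $\bar I$.

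For the SDE \eqref{eq:Xbar}, I would apply Itô's formula to $M_t=e^{\lambda t}\eta(X_t)$, which is licit since $\eta\in C^2$: this gives $M_t=\eta(x)+\int_0^t e^{\lambda s}\big(\lambda\eta+\MCL\eta\big)(X_s)\,ds+\int_0^t e^{\lambda s}(\nabla\eta)(X_s)\,\sigma(X_s)\,dW_s$, where $\MCL$ is the generator associated with \eqref{eq:X}. The stochastic integral is a continuous local martingale and $M$ is a martingale, so the finite-variation term is a continuous local martingale of finite variation vanishing at $0$, hence identically zero; consequently $(\lambda\eta+\MCL\eta)(X_s)=0$ for Lebesgue-a.e.\ $s$ a.s., and by continuity $\lambda\eta+\MCL\eta\equiv0$ on $\bar I$. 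Therefore $dM_t/M_t=\tfrac{(\nabla\eta)\sigma}{\eta}(X_t)\,dW_t$, and Girsanov's theorem—applicable because $M/\eta(x)$ is a true martingale, so no Novikov condition is needed—shows that $\Wb_t:=W_t-\int_0^t\tfrac{\sigma^{t}\nabla^{t}\eta}{\eta}(\Xb_s)\,ds$ is a $\bar{\P}^x$-Brownian motion. Substituting $dW_t=d\Wb_t+\tfrac{\sigma^{t}\nabla^{t}\eta}{\eta}(\Xb_t)\,dt$ into \eqref{eq:X} gives exactly \eqref{eq:Xbar}.

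Finally I would identify the conditional limit. Fix $t\ge0$, $x\in\bar I$, and a bounded continuous functional $F$ of $(X_s)_{s\le t}$. For $T>t$, the Markov property gives $\E^x\big[F\,\1_{\{X_T\not\in A\}}\big]=\E^x\big[F\,\P^{X_t}(X_{T-t}\not\in A)\big]$, so that $\E^x[F\mid X_T\not\in A]$ equals this divided by $\P^x(X_T\not\in A)$. Multiplying numerator and denominator by $q(T)e^{\lambda T}$ and letting $T\to\infty$: the denominator tends to $\eta(x)$, while in the numerator $q(T)e^{\lambda(T-t)}\P^{X_t}(X_{T-t}\not\in A)\to\eta(X_t)$ pointwise (using $q(T)/q(T-t)\to1$) with the same polynomial domination as in the first step, so by dominated convergence the numerator tends to $e^{\lambda t}\E^x[F\,\eta(X_t)]=\eta(x)\,\E^x[F(\Xb)]$ by \eqref{eq:semigroup.Xbar}. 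Hence $\E^x[F\mid X_T\not\in A]\to\E^x[F(\Xb)]$ for all bounded continuous $F$, which is precisely \eqref{eq:conditioning.general}. The main obstacle is the two dominated-convergence passages (in the first and last steps), which are exactly what the polynomial domination and moment hypotheses are designed to control, together with the fact that $M$ is a true martingale with $\E^x[M_t]=\eta(x)$—this is what simultaneously makes the $h$-transform well-defined and legitimises the Girsanov change of measure.
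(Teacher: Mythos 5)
Your proposal is correct and follows the same overall strategy as the paper---Markov property plus the polynomial-domination hypotheses to run dominated convergence, then identification of $M_t=e^{\lambda t}\eta(X_t)$ as a nonnegative martingale, then It\^{o}'s formula to pin down the dynamics. The one genuine divergence is in the final identification of the SDE: you build the law of $(\Xb_t)$ explicitly as a Doob $h$-transform (consistent Radon--Nikodym change of measure via $M/\eta(x)$, then Girsanov), whereas the paper constructs $(\Xb_t)$ from the consistent family of limit laws via Kolmogorov extension, shows it solves the martingale problem for $\bar{\Gen}g=\Gen g+\tfrac{1}{\eta}\nabla\eta\,\sigma\sigma^t\nabla^t g$, and then appeals to Theorem V.20.1 of Rogers--Williams to pass from martingale problem to weak SDE solution. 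Your route is arguably a bit cleaner, as it removes the need for that martingale-problem-to-SDE conversion theorem and makes the $h$-transform structure explicit. One small caveat: you conclude $\lambda\eta+\MCL\eta\equiv0$ on $\bar I$ ``by continuity,'' but $\mu$ and $\sigma$ are only assumed Borel measurable, so $\MCL\eta$ need not be continuous; fortunately you never actually use the pointwise identity---what you need is only that the finite-variation part of $dM_t$ vanishes as a process, which the continuous-FV-local-martingale argument already gives.
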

\noindent
The proof is deferred to Section~\ref{sec:conditioning.general}.

We will apply Lemma~\ref{l:conditioning.general} to our bivariate process
$(Z_t,S_t)_{t\geq0}$ and to the set $A=\{0\}\times\R$.
Lemma~\ref{l:conditioning.general} shows that the limiting constants
of Theorem~\ref{asymptotic_survival} play an important role for conditioning
on ultimate survival.
Theorem~\ref{asymptotic_survival} shows that $\eta(z,s)=\vartheta(z)$,
$(z,s)\in[0,\infty)\times\R$, where
the function $\vartheta\colon[0,\infty)\to[0,\infty)$ is defined through
\begin{equation}\label{eq:vartheta}
  \vartheta(z)=
  \begin{cases}
     1-\Big(1+\frac{\sigma_e^2}{\sigma_b^2}\cdot z\Big)^{-\frac{2\alpha}{\sigma_e^2}}  &\text{if }\al>0\\
    \frac{\sqrt{2}}{\sqrt{\pi}\sigma_e} \log\Big(1+\frac{\sigma_e^2}{\sigma_b^2} \cdot z\Big)          & \text{if }\al=0\\
    \frac{8}{\sigma_e^3}\int_0^\infty f(za)\phi_\beta(a)\,da     & \text{if }\tfrac{\al}{\sigma_e^2}\in(-1,0)\\
    z\,\frac{\sqrt{2}\sigma_e}{\sqrt{\pi}\sigma_b^2}>0   & \text{if }\tfrac{\al}{\sigma_e^2}=-1\\
    z\, 2\Big(\frac{-\alpha-\sigma_e^2}{\sigma_b^2}\Big)
                                                                 & \text{if }\tfrac{\al}{\sigma_e^2}<-1
  \end{cases}
\end{equation}
for every $z\in[0,\infty)$
where $\phi_\beta$
is defined in Theorem~\ref{asymptotic_survival}.
Moreover, define $\ld:=0$ for $\al\geq0$,
$\ld:=\tfrac{\al^2}{2\sigma_e^2}$ for $\al\in(-\sigma_e^2,0)$
and $\ld:=-(\al+\tfrac{\sigma_e^2}{2})$ for $\al\leq -\sigma_e^2$.
The following theorem characterizes the BDRE conditioned
to never go extinct.
\begin{theorem}  \label{thm:conditioning.BDRE}
  Assume $\al\in\R$ and $\sigma_b,\sigma_e\in(0,\infty)$.
  Let $(Z_t,S_t)_{t\geq0}$ be the strong solution of~\eqref{eq:BDRE}.
  Let $(\Zb_t,\Sb_t)_{t\geq0}$ denote the process $(Z_t,S_t)_{t\geq 0}$
  conditioned to never go extinct.
  Then this process is a weak solution of the SDEs
  \begin{equation}  \begin{split}  \label{eq:SDE.conditioned}
    d\Zb_t&=\left(\sigma_b^2\frac{\vartheta^{'}(\Zb_t)}{\vartheta(\Zb_t)}\Zb_t+\frac{1}{2}\sigma_e^2\Zb_t
           \right)\,dt
          +\Zb_td\Sb_t+\sqrt{\sigma_b^2\Zb_t}dW_t^{(b)}\\
    d\Sb_t&=\left(\al+\sigma_e^2\frac{\vartheta^{'}(\Zb_t)}{\vartheta(\Zb_t)}\Zb_t\right)\,dt
            +\sigma_e dW_t^{(e)}
  \end{split}     \end{equation}
  for $t\in[0,\infty)$ and satisfies
  \begin{equation}  \label{eq:semigroup.Zb}
    \E^{(z,s)}\left[g\left(\Zb_t,\Sb_t\right)\right]
    =\frac{e^{\ld t}}{\vartheta(z)}\E^{(z,s)}\left[\vartheta(Z_t)g(Z_t,S_t)\right]
  \end{equation}
  for all $(z,s)\in(0,\infty)\times\R$, $t\in[0,\infty)$ and all Borel measurable
  functions $g\colon[0,\infty)\times\R\to[0,\infty)$.
  If $\al\in(-\sigma_e^2,\infty)$, then the function
  $(0,\infty)\ni z\mapsto \sigma_e^2 z\vartheta^{'}(z)/\vartheta(z)\in\R$
  is strictly monotonic decreasing
  and
  satisfies $\lim_{z\to 0}\sigma_e^2 z\vartheta^{'}(z)/\vartheta(z)=\sigma_e^2$
  and $\lim_{z\to\infty}\sigma_e^2 z\vartheta^{'}(z)/\vartheta(z)=\max(-\al,0)$.
  If $\al\in(-\infty,-\sigma_e^2]$, then
  $\sigma_e^2 z\vartheta^{'}(z)/\vartheta(z)=\sigma_e^2$
  for all $z\in(0,\infty)$.
  If $\al>-\sigma_e^2$,
  then $\lim_{t\to\infty}\Zb_t=\infty$ in distribution.
  If $\al<-\sigma_e^2$, then
  \begin{equation}  \label{eq:invariant}
    \P^z\left(\Zb_t\in dy\right)\varwlimt
    c\, y\left(\sigma_b^2+\sigma_e^2 y\right)^{\frac{2\al}{\sigma_e^2}}\,dy
  \end{equation}
  (weak convergence of measures on $(0,\infty)$)
  for every $z\in(0,\infty)$ where
  $c=c(\al,\sigma_b^2,\sigma_e^2)\in(0,\infty)$ is a normalizing constant
  such that the right-hand side is a probability distribution.
\end{theorem}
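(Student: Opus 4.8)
Throughout I assume the results stated above and use the abbreviation $\psi(z):=\sigma_e^2z\vartheta'(z)/\vartheta(z)$.

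\medskip\noindent\emph{Step 1 (the SDE \eqref{eq:SDE.conditioned} and the identity \eqref{eq:semigroup.Zb}).} The plan is to obtain these by applying Lemma~\ref{l:conditioning.general} to the bivariate diffusion $(X_t)_{t\ge0}:=(Z_t,S_t)_{t\ge0}$ of \eqref{eq:BDRE} and to $A:=\{0\}\times\R$, so that $\{X_t\notin A\}=\{Z_t>0\}$. By Proposition~\ref{p:time.change}, conditionally on the environment $Z_t$ is a time-changed Feller diffusion, which makes $\P^{(z,s)}(Z_t>0)$ independent of the starting value $s$; hence the candidate $h$-function is $\eta(z,s)=\vartheta(z)$, the first bullet of the lemma is exactly Theorem~\ref{asymptotic_survival} with $(\ld,q)$ equal, in the five regimes, to $(0,1)$, $(0,\sqrt t)$, $(\tfrac{\al^2}{2\sigma_e^2},t^{3/2})$, $(\tfrac{\sigma_e^2}{2},\sqrt t)$, $(-(\al+\tfrac{\sigma_e^2}{2}),1)$ (all with $q\in\CQ$), and $\bar I=(0,\infty)\times\R$ since $\vartheta>0$ on $(0,\infty)$ and $\vartheta(0)=0$. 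The second bullet I would get from the elementary estimate $\P^z(Z_t>0)\le\tfrac{\sigma_e^2}{\sigma_b^2}z\,\E[1/(2A^{(\beta)}_{t\sigma_e^2/4})]$ (from \eqref{survneu} and $f(x)\le\tfrac{\sigma_e^2}{\sigma_b^2}x$) together with the known $t\to\infty$ asymptotics of $\E[1/(2A^{(\beta)}_v)]$, which give $q(t)e^{\ld t}\P^{(z,s)}(Z_t>0)\le\min(q(t)e^{\ld t},Cz)$ and hence the bound with $p=1$; the third bullet holds because $\E^{(z,s)}[Z_t]=ze^{(\al+\sigma_e^2/2)t}<\infty$. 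The one non-routine check is the Doob correction $\sigma\sigma^{t}\nabla^{t}\eta/\eta$: for $\mu(z,s)=((\al+\tfrac12\sigma_e^2)z,\al)$ and diffusion matrix with rows $(\sigma_ez,\sqrt{\sigma_b^2z})$ and $(\sigma_e,0)$ one computes it to be $\tfrac{\vartheta'(z)}{\vartheta(z)}\bigl((\sigma_e^2z^2+\sigma_b^2z),\,\sigma_e^2z\bigr)$, and inserting this into \eqref{eq:Xbar} (using $\sigma_e^2z^2+\sigma_b^2z=z(\sigma_b^2+\sigma_e^2z)$) is exactly \eqref{eq:SDE.conditioned}; \eqref{eq:semigroup.Zb} is \eqref{eq:semigroup.Xbar}.

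\medskip\noindent\emph{Step 2 (the function $\psi$).} For $\al\le-\sigma_e^2$ the function $\vartheta$ is linear, so $\psi\equiv\sigma_e^2$. For $\al\ge0$ I would substitute $u=1+\tfrac{\sigma_e^2}{\sigma_b^2}z\in(1,\infty)$: then $z\vartheta'/\vartheta=\gamma(u-1)/\bigl(u(u^{\gamma}-1)\bigr)$ with $\gamma=\tfrac{2\al}{\sigma_e^2}$ (read $(u-1)/(u\log u)$ when $\al=0$), whose logarithmic derivative has the sign of $-\gamma u^{\gamma+1}+(\gamma+1)u^{\gamma}-1$ (resp.\ $\log u-u+1$); this vanishes at $u=1$ and is strictly decreasing, hence $<0$, for $u>1$, giving strict decrease of $\psi$, with limits $\sigma_e^2$ as $u\downarrow1$ and $0=\max(-\al,0)$ as $u\uparrow\infty$. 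For $\al\in(-\sigma_e^2,0)$ I would use that $\vartheta$ solves $\MCL\vartheta=-\ld\vartheta$ on $(0,\infty)$ with $\MCL f=\tfrac12(\sigma_e^2z^2+\sigma_b^2z)f''+(\al+\tfrac12\sigma_e^2)zf'$ (immediate from \eqref{eq:semigroup.Zb}, or from Theorem~\ref{asymptotic_survival} and the Markov property); for $g:=z\vartheta'/\vartheta$ this becomes the Riccati-type equation $zg'=g(1-g)-\tfrac{2[(\al+\frac12\sigma_e^2)g+\ld]}{\sigma_e^2+\sigma_b^2/z}$, whose limiting autonomous forms have equilibria $g=1$ as $z\to0$ (so $\psi(0^+)=\sigma_e^2$, also using $\vartheta'(0)\in(0,\infty)$, which follows from $\int_0^\infty a\,\phi_\beta(a)\,da<\infty$) and the double root $g=-\al/\sigma_e^2$ of $\sigma_e^2g^2+2\al g+2\ld=0$ as $z\to\infty$ (so, by $\ld=\tfrac{\al^2}{2\sigma_e^2}$, $\psi(\infty)=-\al=\max(-\al,0)$), after checking $zg'(z)\to0$ at both ends. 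I expect the strict monotonicity of $\psi$ in this weakly subcritical regime to be the main obstacle: since $\vartheta(z)=\tfrac{8}{\sigma_e^3}\int_0^\infty f(za)\phi_\beta(a)\,da$ has no elementary form, one must extract from $\phi_\beta$ that $y\mapsto\log\vartheta(e^{y})$ is strictly concave — which I would attempt either via a Tur\'an/log-concavity inequality for the family of integrals $\int_0^\infty (za)^{k}e^{-\frac{\sigma_e^2}{\sigma_b^2}za}\phi_\beta(a)\,da$, or by directly ruling out interior critical points of $g$ from the Riccati equation.

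\medskip\noindent\emph{Step 3 ($\Zb_t\to\infty$ in distribution for $\al>-\sigma_e^2$).} Using \eqref{eq:semigroup.Zb} and $\vartheta(0)=0$, for every $K>0$
\[
\P^{(z,s)}(\Zb_t\le K)=\frac{e^{\ld t}}{\vartheta(z)}\E^{(z,s)}\!\left[\vartheta(Z_t)\1_{\{0<Z_t\le K\}}\right]\le\frac{M_K}{\vartheta(z)}\,e^{\ld t}\,\P^{(z,s)}(Z_t>0),\qquad M_K:=\sup\nolimits_{[0,K]}\vartheta<\infty .
\]
If $\al\in(-\sigma_e^2,0]$ the normalisation $q(t)$ from Theorem~\ref{asymptotic_survival} tends to infinity, so $e^{\ld t}\P^{(z,s)}(Z_t>0)=q(t)^{-1}\cdot q(t)e^{\ld t}\P^{(z,s)}(Z_t>0)\to0$, whence $\P^{(z,s)}(\Zb_t\le K)\to0$. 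If $\al>0$ then $\ld=0$, and writing $Z_t=F_{\tau(t)}e^{S_t}$ as in Proposition~\ref{p:time.change} (with $\tau(\infty)<\infty$ a.s.\ since $\al>0$, and using that critical Feller's diffusion dies out in finite time) one gets $Z_t\to Z_\infty\in\{0,\infty\}$ a.s.\ with $\P^{(z,s)}(Z_\infty=\infty)=\lim_t\P^z(Z_t>0)=\vartheta(z)$; since $\vartheta(\infty)=1$ and $\E^{(z,s)}[\vartheta(Z_t)]=\vartheta(z)$ (by the Markov property of $(Z_\cdot,S_\cdot)$ and dominated convergence in $\E^{(z,s)}[\P^{(Z_t,S_t)}(Z_u>0)]=\P^{(z,s)}(Z_{t+u}>0)$ as $u\to\infty$), bounded convergence gives $\E^{(z,s)}[\vartheta(Z_t)\1_{\{Z_t>K\}}]\to\vartheta(z)$, so again $\P^{(z,s)}(\Zb_t\le K)\to0$.

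\medskip\noindent\emph{Step 4 (the limiting distribution for $\al<-\sigma_e^2$).} Here $\vartheta$ is linear, hence $\psi\equiv\sigma_e^2$, and substituting $d\Sb_t=(\al+\sigma_e^2)\,dt+\sigma_e\,dW^{(e)}_t$ into the first line of \eqref{eq:SDE.conditioned} shows that $(\Zb_t)_{t\ge0}$ is on its own a one-dimensional diffusion,
\[
d\Zb_t=\bigl(\sigma_b^2+(\al+\tfrac32\sigma_e^2)\Zb_t\bigr)\,dt+\sigma_e\Zb_t\,dW^{(e)}_t+\sqrt{\sigma_b^2\Zb_t}\,dW^{(b)}_t ,
\]
a branching diffusion in random environment with constant immigration of rate $\sigma_b^2$ — precisely the immortal-individual/backbone mechanism announced in the introduction. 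With $a(z)=\sigma_e^2z^2+\sigma_b^2z$ and $b(z)=\sigma_b^2+(\al+\tfrac32\sigma_e^2)z$, partial fractions give $\int^z\tfrac{2b}{a}=2\log z+\tfrac{2\al+\sigma_e^2}{\sigma_e^2}\log(\sigma_b^2+\sigma_e^2z)$, so the speed density is $m(z)\propto a(z)^{-1}\exp\bigl(\int^z\tfrac{2b}{a}\bigr)=z\,(\sigma_b^2+\sigma_e^2z)^{2\al/\sigma_e^2}$, which is integrable on $(0,\infty)$ since it is $O(z)$ at $0$ and $O(z^{1+2\al/\sigma_e^2})$ with $1+\tfrac{2\al}{\sigma_e^2}<-1$ at $\infty$. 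After checking that $0$ is an entrance boundary ($b(0)=\sigma_b^2>0$, $a(z)\sim\sigma_b^2z$) and $\infty$ is natural and non-attracting ($\log\Zb_t$ has asymptotic drift $\al+\sigma_e^2<0$), $\Zb$ is positive recurrent with finite speed measure, and the standard ergodic theorem for one-dimensional diffusions yields that $\P^z(\Zb_t\in\cdot)$ converges, weakly on $(0,\infty)$ as $t\to\infty$, to the normalised speed measure, which is \eqref{eq:invariant}.
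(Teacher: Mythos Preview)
Your Steps~1 and~4 follow the paper's proof closely and are correct; the verification of the uniform bound in Step~1 is sketched but the route via $f(x)\le\tfrac{\sigma_e^2}{\sigma_b^2}x$ and the known asymptotics of $\E[1/(2A^{(\beta)}_v)]$ is exactly what the paper does regime by regime. Your Step~3 is a genuinely different and cleaner argument than the paper's: the paper treats $(\Zb_t)_{t\ge0}$ as an autonomous one-dimensional diffusion and computes its scale function $R$, showing $R(0)=-\infty$ and $R(\infty)<\infty$ in each of the regimes $\al>0$, $\al=0$, $\al\in(-\sigma_e^2,0)$ separately (the last case requiring the asymptotic $\vartheta(z)\sim c\,z^{\beta/2}\log z$); your use of \eqref{eq:semigroup.Zb} together with $q(t)\to\infty$ for $\al\in(-\sigma_e^2,0]$ and the a.s.\ dichotomy $Z_\infty\in\{0,\infty\}$ for $\al>0$ avoids these computations entirely.

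The real gap is Step~2 in the weakly subcritical regime $\al\in(-\sigma_e^2,0)$, and you correctly identify it as the obstacle. Your Riccati heuristic for the limits of $\psi$ is not rigorous as written (you would need to prove $zg'(z)\to0$ at both ends, which is essentially as hard as what you are trying to show), and neither of your suggested routes to strict monotonicity is carried out. The paper's approach is quite specific to the integral representation $\vartheta(z)=\tfrac{8}{\sigma_e^3}\int_0^\infty f(za)\phi_\beta(a)\,da$ and goes as follows. First one rewrites $\tilde\phi_\beta(a):=\tfrac{8}{\sigma_e^3}a^{\beta/2+1}\phi_\beta(a)$ via the substitution $\eta=\sqrt{a}\cosh(\xi)$ and observes that it is \emph{strictly decreasing} in $a$ (because $a\mapsto e^{-a}\,\mathrm{arcosh}(\eta/\sqrt{a})\1_{\eta>\sqrt a}$ is). Then, using $\vartheta^{(n)}(z)=\int_0^\infty f^{(n)}(za)a^{n-\beta/2-1}\tilde\phi_\beta(a)\,da$ and two integrations by parts (moving derivatives between $f^{(n)}$ and $\tilde\phi_\beta$), one obtains the two strict inequalities
\[
\tfrac{\beta}{2}\,\vartheta(z)<z\vartheta'(z)
\qquad\text{and}\qquad
\bigl(1-\tfrac{\beta}{2}\bigr)\vartheta'(z)<-z\vartheta''(z)
\]
for all $z>0$; inserting them into the identity
$\tfrac{d}{dz}\bigl(z\vartheta'/\vartheta\bigr)=\vartheta^{-2}\bigl[(\tfrac{\beta}{2}\vartheta-z\vartheta')\vartheta'+\vartheta((1-\tfrac{\beta}{2})\vartheta'+z\vartheta'')\bigr]$
gives $\tfrac{d}{dz}(z\vartheta'/\vartheta)<0$. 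The limit $\psi(z)\to-\al$ as $z\to\infty$ is obtained not from the ODE but from explicit asymptotics $\vartheta(z)\sim c_1 z^{\beta/2}\log z$ and $z\vartheta'(z)\sim c_2 z^{\beta/2}\log z$ (proved by substituting $b=za$ in the integral and using $\mathrm{arcosh}(y)/\log y\to1$), with $c_2/c_1=\beta/2$; the limit at $0$ follows from $\vartheta'(0)\in(0,\infty)$ as you say.
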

\noindent
The proof is deferred to Section~\ref{sec:The BDRE conditioned to go never extinct}.

Theorem~\ref{thm:conditioning.BDRE} exhibits a difference
in the survival opportunities
between the weakly subcritical and stronlgy subcritical regimes.
In the strongly subcritical regime, $\vartheta$ is a linear function
and the SDEs of the
conditioned process $(\Zb_t,\Sb_t)_{t\geq0}$
simplify to
  \begin{equation}  \begin{split}  \label{eq:Zb.linear}
    d\Zb_t&=\left(\sigma_b^2+\frac{1}{2}\sigma_e^2\Zb_t
           \right)\,dt
          +\Zb_td\Sb_t+\sqrt{\sigma_b^2\Zb_t}dW_t^{(b)}\\
    d\Sb_t&=\left(\al+\sigma_e^2\right)\,dt
            +\sigma_e dW_t^{(e)}
  \end{split}     \end{equation}
for $t\in[0,\infty)$.
The drift of the associated Brownian motion $(\Sb_t)_{t\geq0}$ is increased by $\sigma_e^2$, but is still negative.
Thus the conditioned process survives solely due to the
immigration term $\sigma_b^2\,dt$. 
In the weakly subcritical regime,
the drift of the associated Brownian motion $(\Sb_t)_{t\geq0}$ is strictly positive.
Thus the conditioned process in the weakly subcritical regime survives due to a supercritical
environment with positive probability.
The immigration term $\sigma_b^2\tfrac{\vartheta^{'}(\Zb_t)}{\vartheta(\Zb_t)}\Zb_t\,dt$
is not needed for this. The effect of this immigration term is to ensure
survival with full probability.
Another observation in the weakly subcritical regime is
that the environment in the conditioned process
depends on the population size.
The reason for this is that the survival probability of a supercritical BDRE depends
on the initial mass.
In addition
it is intuitive that
the environment in the conditioned process
needs to be less beneficial if the population size is large.
Formally this means that the additional drift term
 $\sigma_e^2 z \vartheta^{'}(z)/\vartheta(z)$ is decreasing in $z\in(0,\infty)$.
More precisely, this function decreases
from $\sigma_e^2$ to $\max(-\al,0)$
as the population size increases.
Moreover, Theorem~\ref{thm:conditioning.BDRE} provides an explicit
quantification of the dependence of the additional drift term in the
conditioned process on the population size.

In the strongly subcritical and in the intermediately subcritical regimes,
conditioning on ultimate survival affects each individual in the same way
so that the conditioned process is again a BDRE except for an additional
immigration term.
In the case of a constant environment,
branching processes with immigration may be represented as a branching process
with an additional immortal individual.
The trajectory of the immortal individual is referred to as spine or backbone.
This backbone construction goes back to Kallenberg (1977)\nocite{Kallenberg1977}
for branching processes in discrete time and has later been established
e.g.~for branching processes in continuous time
(Gorostiza and Wakolbinger 1991\nocite{GorostizaWakolbinger1991}),
for the Dawson-Watanabe superprocess (Evans 1993\nocite{Evans1993}),
for the infinite-variance $(1+\beta)$-superprocess
(Etheridge and Williams 2003\nocite{EtheridgeWilliams2003})
or for general continuous-state branching processes (Lambert 2007\nocite{Lambert2007}).
In all of these backbone constructions, families evolve independently of each other.
The next theorem establishes the backbone construction for BDREs conditioned on ultimate survival
in the strongly subcritical and in the intermediately subcritical regimes.
Here the families are correlated through the environment.
Due to Proposition~\ref{p:time.change}, however,
this correlation is rather explicit.

We understand a family to be a single ancestor together with the progeny
of that individual.
The total mass hereof as a function of time
is an excursion from $0$
as a single individual has mass $0$ in the diffusion approximation.
We denote the space of continuous excursions from $0$ as
\begin{equation}  \label{eq:excursion.space}
  U:=\left\{\chi\in\C\left((-\infty,\infty),[0,\infty)\right)\colon
    T_0(\chi)\in(0,\infty],\,\chi_t=0\;\,\forall\,
    t\in(-\infty,0]\cup[T_0(\chi),\infty)\right\}
\end{equation}
where $T_0(\chi):=\inf\{t>0\colon\chi_t=0\}\in[0,\infty]$ is the 
first hitting time of $0$ ($\inf\emptyset:=\infty$).
Let $(F_t)_{t\geq0}$ be the strong solution of the SDE
\begin{equation}
  dF_t= \sqrt{F_t}dW_t
\end{equation}
for $t\in[0,\infty)$.
The law of families of the process $(F_t)_{t\geq0}$ is the excursion measure $Q$
which is a $\sigma$-finite measure on the excursion space $U$
and which is uniquely determined by
\begin{equation}   \label{eq:excursion.measure}
  \int g\left(\chi\right)\,Q(d\chi)
  =\lim_{\delta\rightarrow0}\frac{1}{\delta}\,\E^{\delta}\Big[g\left((F_t)_{t\geq0}\right)\Big]
\end{equation}
for all bounded, continuous functions $g\colon\C\left([0,\infty),[0,\infty)\right)\to\R$ depending on a finite time interval and such that there is an $\eps>0$ such that
$g(\chi)=0$
for all $\chi\in\C\left([0,\infty),[0,\infty)\right)$ with
$\sup_{t\geq0}\chi_t\leq\eps$.
Such a measure $Q_F$ exists according to Theorem 1 in~\cite{Hutzenthaler2009EJP}.
\begin{theorem}  \label{thm:family.decomposition}
  Assume $\sigma_b,\sigma_e\in(0,\infty)$ and $\al\in(-\infty,-\sigma_e^2]$.
  Let $(W_t^{(e)})_{t\geq0}$ be a standard Brownian motion.
  Define
  \begin{itemize}
    \item   $\St_t:=(\al+\sigma_e^2) t+\sigma_e W_t^{(e)}$ for $t\in[0,\infty)$,\vspace{-3mm}
    \item   $(\tilde{\tau}(t))_{t\geq0}$
           through $\tilde{\tau}(t):=\int_0^t e^{-\St_u}\sigma_b^2\,du$
           for $t\in[0,\infty)$,\vspace{-3mm}
    \item a Poisson point process $\CP$ on $[0,\infty)\times U$ with intensity measure $dy\times Q$ and\vspace{-3mm}
    \item a Poisson point process $\tilde{\CP}$ on $[0,\infty)\times U$
           with intensity measure $dt\times Q$.\vspace{-1mm}
  \end{itemize}
  Assume the ingredients $(W_t^{(e)})_{t\geq0}$, $\CP$ and $\tilde{\CP}$ to be independent.
  Let $(\Zb_t,\Sb_t)_{t\geq0}$ denote the BDRE $(Z_t,S_t)_{t\geq0}$ started in $Z_0=z\in(0,\infty)$
  and conditioned
  to never go extinct
  and
  define a process $(\Zt_t)_{t\geq0}$ through $\Zt_0:=z$ and through
  \begin{equation}  \label{eq:family.decomposition}
    \Zt_t:=\sum_{(y,\chi)\in\CP}\1_{y\leq z}\,\chi_{\tilde{\tau}(t)}e^{\St_t}
    +\sum_{(u,\chi)\in\tilde{\CP}}\chi_{\tilde{\tau}(t)-u}e^{\St_t}
  \end{equation}
  for $t\in(0,\infty)$.
  Then $(\Zb_t,\Sb_t)_{t\geq0}$ and $(\Zt_t,\St_t)_{t\geq0}$ are equal in distribution.
\end{theorem}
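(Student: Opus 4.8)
The plan is to realise the conditioned process $(\Zb_t,\Sb_t)_{t\geq0}$ as a time-changed and reweighted Feller branching diffusion \emph{with immigration}, and then apply the standard Poissonian excursion decomposition of the latter. First I would reduce to the explicit SDE available in this regime: since $\al\in(-\infty,-\sigma_e^2]$, Theorem~\ref{thm:conditioning.BDRE} says $\vartheta$ is linear, hence $\sigma_b^2 z\vartheta'(z)/\vartheta(z)=\sigma_b^2$ and $\sigma_e^2 z\vartheta'(z)/\vartheta(z)=\sigma_e^2$, so $(\Zb_t,\Sb_t)_{t\geq0}$ is a weak solution of~\eqref{eq:Zb.linear} and $(\Sb_t)_{t\geq0}$ has the law of $(\St_t)_{t\geq0}$. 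Pathwise uniqueness holds for~\eqref{eq:Zb.linear} (Yamada--Watanabe for the $\Zb$-coordinate, the $\Sb$-coordinate being an explicit Brownian motion with drift), so the law of $(\Zb_t,\Sb_t)_{t\geq0}$ is pinned down by~\eqref{eq:Zb.linear}; it therefore suffices to produce a process with the stated family decomposition that solves~\eqref{eq:Zb.linear}.

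Next I would set up the time-change representation, in the spirit of Proposition~\ref{p:time.change} (cf.\ Lemma~\ref{l:time.change.immigration}) but now with an immigration term. Let $(G_u)_{u\geq0}$ be the strong solution of $dG_u=du+\sqrt{G_u}\,dB_u$, $G_0=z$ (Feller branching diffusion with unit branching and unit immigration rate), with $(B_u)_{u\geq0}$ a standard Brownian motion independent of $(W_t^{(e)})_{t\geq0}$; put $\St_t=(\al+\sigma_e^2)t+\sigma_e W_t^{(e)}$, $\tilde\tau(t)=\int_0^t\sigma_b^2 e^{-\St_s}\,ds$, and $\Zb_t:=G_{\tilde\tau(t)}e^{\St_t}$. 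Since $\al+\sigma_e^2\le0$ we have $\int_0^\infty e^{-\St_s}\,ds=\infty$ almost surely, so $\tilde\tau$ is a.s.\ a continuous strictly increasing bijection of $[0,\infty)$. Applying It\^o's formula (the Dambis--Dubins--Schwarz time-change formula for the branching part, and $d(e^{\St_t})=e^{\St_t}(\al+\tfrac32\sigma_e^2)\,dt+e^{\St_t}\sigma_e\,dW^{(e)}_t$ for the environment part, with no cross-variation because $B\perp W^{(e)}$) one checks that $(\Zb_t,\St_t)_{t\geq0}$ solves exactly~\eqref{eq:Zb.linear}; by the previous paragraph it is a version of the conditioned BDRE, and $(G_u)_{u\geq0}$ is independent of $(W_t^{(e)})_{t\geq0}$.

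Then I would decompose $(G_u)_{u\geq0}$, $G_0=z$, into the independent sum of the descendants of the initial mass and the descendants of the immigration. By the branching property and infinite divisibility of the Feller branching diffusion, the law started from $z$ equals that of $\sum_{(y,\chi)\in\CP}\1_{y\leq z}\,\chi_{\bullet}$ for a Poisson point process $\CP$ on $[0,\infty)\times U$ with intensity $dy\times Q$, $Q$ being the excursion measure of~\eqref{eq:excursion.measure}; this is precisely the representation of Theorem~1 in~\cite{Hutzenthaler2009EJP}. The mass immigrating in $[u,u+du]$ launches, independently, a family cloud of intensity $du\times Q$ started at time $u$, so the immigration part has the law of $\sum_{(u,\chi)\in\tilde{\CP}}\chi_{\bullet-u}$ for an independent Poisson point process $\tilde{\CP}$ on $[0,\infty)\times U$ with intensity $dt\times Q$ (excursions vanishing on $(-\infty,0]$). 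Hence, jointly in $v\in[0,\infty)$,
\[
  G_v\eqd\sum_{(y,\chi)\in\CP}\1_{y\leq z}\,\chi_v+\sum_{(u,\chi)\in\tilde{\CP}}\chi_{v-u},
\]
with $\CP,\tilde{\CP}$ independent and independent of $(W_t^{(e)})_{t\geq0}$. Substituting $v=\tilde\tau(t)$ and multiplying by $e^{\St_t}$, and recalling $\Zb_t=G_{\tilde\tau(t)}e^{\St_t}$, yields $(\Zb_t,\St_t)_{t\geq0}\eqd(\Zt_t,\St_t)_{t\geq0}$ with $\Zt$ as in~\eqref{eq:family.decomposition}, which is the claim.

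I expect the main obstacle to be Step~3: one must have the two excursion representations (initial-mass part and immigration part) as identities of the \emph{full} path-valued processes, not merely of one-dimensional marginals, and jointly independent; and one must verify that this path-space identity in distribution is preserved after composing with the random, $\sigma(W^{(e)})$-measurable time change $\tilde\tau$ — which is exactly where the independence of $\CP,\tilde{\CP}$ from $W^{(e)}$ established in Step~2 is used. A secondary point to handle with care is that the construction~\eqref{eq:family.decomposition} correctly reproduces the immigration cloud: the superposition of families born along a rate-$dt$ clock, each begun at its birth time, is again a Poisson point process with intensity $dt\times Q$, which follows from the restriction/superposition properties of Poisson point processes together with the branching-and-immigration property of $G$ on disjoint time intervals.
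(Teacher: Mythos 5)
Your proposal is correct and follows essentially the same route as the paper: you identify the conditioned process as a solution of the linear SDE~\eqref{eq:Zb.linear} with uniqueness in law (which the paper asserts after Theorem~\ref{thm:conditioning.BDRE}), time-change a unit-rate Feller branching diffusion with immigration to produce such a solution (the paper's Lemma~\ref{l:time.change.immigration}), and then invoke the Poissonian excursion decomposition of the Feller diffusion with immigration (the paper's Lemma~\ref{l:family.decomposition.Feller}). The only difference is presentational: the paper first builds $\Ft$ from the Poisson point processes and then time-changes, whereas you first time-change $G$ and then decompose it — both orders lead to the same conclusion by the independence of $(B_u)_{u\geq0}$ and $(W^{(e)}_t)_{t\geq0}$.
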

\noindent
The proof is deferred to Section~\ref{sec:family.decomposition}.

The excursions $(y,\chi)\in\CP$ are the families whose ancestor lived before time $0$.
Due to conditioning on ultimate survival, there is an immortal individual.
Offspring of this individual are the ancestors of families
$(s,\chi)\in\tilde{\CP}$.
Conditioned on the environment, all of these families evolve
independently of each other.
Note that the environment appears in~\eqref{eq:family.decomposition}
only through the time-change and through the reweighting of the
critical excursion paths.
In the critical and weakly subcritical regimes, the conditioned process
$(\Zb_t,\Sb_t)_{t\geq0}$
is not a BDRE with immigration.
A representation with independent families is therefore not possible.
In view of Theorem~\ref{thm:family.decomposition},
the SDEs~\eqref{eq:SDE.conditioned} can still be interpreted as follows:
Birth events of the immortal individual are accepted only with
probability $z\vartheta^{'}(z)/\vartheta(z)\in(0,1)$ if the current
population size is $z\in(0,\infty)$.
Moreover the additional drift of $(\Sb_t)_{t\geq0}$
is not $\sigma_e^2$ as in the strongly subcritical regime
but $\sigma_e^2 z\vartheta^{'}(z)/\vartheta(z)\in(0,\sigma_e^2)$ if the current
population size is $z\in(0,\infty)$.

\section{Diffusion approximation}
\label{sec:diffusion.approximation}
\begin{proof}[{Proof of Proposition~\ref{p:diffusion.approximation}}]
 We derive Proposition~\ref{p:diffusion.approximation}
 from
 Corollary 2.18 of Kurtz (1978)\nocite{Kurtz1978}.
 To check the assumptions hereof we need more notation.
 Define
 \begin{equation}
   \alpha_i^{(n)}
   :=\sum_{k=0}^\infty \robb{\frac{k}{m\rob{Q_i^{(n)}}}-1}^2
                    Q_i^{(n)}\rob{k}
 \end{equation}
 for all $i\in\N,n\in\N$
 and note that $\alpha_i^{(n)}$, $i\in\N$, are independent
 and identically distributed
 for every $n\in\N$.
 According to assumption~\eqref{eq:assumption.var.b},
 the expectation of $\alpha_i^{(n)}$ converges to $\sigma_b^2$
 as $n\to\infty$ for every $i\in\N$.
 Therefore the law of large numbers for triangular independent sequences implies that
 \begin{equation}
   A_n(t):=\frac{1}{n}\sum_{i=1}^{\floor{nt}}\alpha_i^{(n)}
   \to t\, \sigma_b^2\qqasn\quad
 \end{equation}
 almost surely for every $t\in[0,\infty)$.
 The rescaled associated random walk converges (see \cite{RavkauskasSuquet2003}, Theorem 3) to a Brownian motion
 $(S_t)_{t\geq0}$, that is,
 \begin{equation}
   \robb{\frac{S_{tn}^{(n)}}{\sqrt{n}}}_{t\geq0}
   \varwlim \rob{S_t}_{t\geq0} \qqasn.
 \end{equation}
 The Brownian motion $(S_t)_{t\geq0}$ has drift $\alpha$
 due to assumption~\eqref{eq:assumption.mean} and
 due to $\log(x)\approx x-1$ for all $x$ in a neighbourhood of $1$.
 Furthermore, $(S_t)_{t\geq0}$ has infinitesimal variance $\sigma_e^2$
 due to assumption~\eqref{eq:assumption.var.e}.
 Moreover,
 Corollary 2.18 of Kurtz (1978)\nocite{Kurtz1978}
 requires a third moment condition which follows
 from~\eqref{eq:assumption.third.moment} and from
 \begin{equation}  
   \E\sqbb{\frac{1}{n^{3/2}}
    \sum_{i=0}^{\floor{nt}-1}
    \sum_{k=0}^\infty\absbb{\frac{k}{m\rob{Q_i^{(n)}}}-1}^3
         Q_i^{(n)}(k)
   }\leq
   \frac{t}{\sqrt{n}}\sup_{\nb\in\N}
   \E\sqbb{
    \sum_{k=0}^\infty\absbb{\frac{k}{m\rob{Q_0^{(\nb)}}}-1}^3
         \cdot Q_0^{(\nb)}(k)
   }\stackrel{n\rightarrow\infty}{\longrightarrow}0.\nonumber
   \end{equation}
 Having checked all assumptions,
 Proposition~\ref{p:diffusion.approximation}
 follows from
 Corollary 2.18 of Kurtz (1978)\nocite{Kurtz1978}.
\end{proof}
\section{The Laplace transform and the extinction probability}
\label{sec:Laplace.transform}

\begin{proof}[{Proof of Corollary~\ref{c:Laplace.transform}}]
 It suffices to prove~\eqref{eq:Laplace.transform}
 for the version~\eqref{eq:time.change} of the BDRE
 due to Proposition~\ref{p:time.change}.
 The Laplace transform of Feller's branching diffusion
 $(F_t)_{t\geq0}$ satisfies
 \begin{equation}  \label{eq:Laplace.transform.Feller}
   \E^z\sqB{\exp\rob{-\ld F_t}}
   =\exp\roB{-\frac{z}{\frac{1}{2}t
                       +\frac{1}{\ld}}}
   \quad \text{for }t,z,\ld\in[0,\infty),
 \end{equation}
 (e.g., Example 26.11 of~\cite{Klenke2008}).
 Thus we get for the Laplace transform of $F_{\tau(t)}e^{S_t}$ that
 \begin{equation}  \begin{split}
   \E^{(z,0)}\sqB{\exp\rob{-\ld F_{\tau(t)}e^{S_t}}|\rob{S_s}_{s\leq t}}
   &=\exp\roB{-\frac{z}{\frac{1}{2}\tau(t)
                       +\frac{1}{\ld e^{S_t}}}}
   \\
   &=
   \exp\bigg(-\frac{z}{\int_0^t \frac{\sigma_b^2}{2}
        \exp\big(-S_s \big) ds
        + \frac{1}{\lambda} \exp(-S_t)
        }
        \bigg)
 \end{split}     \end{equation}
 for all $t,z,\ld\in[0,\infty)$ almost surely.
 This completes the proof.
\end{proof}
%
%
\begin{proof}[{Proof of Corollary~\ref{c:survival_prob}}]
 Fix $t\in[0,\infty)$ and $z\in[0,\infty)$.
 Letting $\ld\to\infty$ in formula~\eqref{eq:Laplace.transform} for
 the Laplace transform and applying the dominated convergence theorem yields that
 \begin{equation}  \begin{split}  \label{eq:proof.survival.prob}
   \lefteqn{
   \P^{(z,0)}\Bigl(Z_t>0\,\big|\,(S_s)_{s\leq t}\Bigr)
   =\lim_{\ld\to\infty}\E^{(z,0)}\sqB{1-\exp\rob{-\ld Z_t}\big|\,(S_s)_{s\leq t}}
   }\\
   &=\lim_{\ld\to\infty}\sqbb{1-\exp\bigg(-\frac{z}{\int_0^t \frac{\sigma_b^2}{2}
        \exp\big(-S_s \big) ds
        + \frac{1}{\lambda} \exp(-S_t)
        }
        \bigg)}\\
   &=1-\exp\biggl(-\frac{z}{\int_0^t\frac{\sigma_b^2}{2}
                 \exp\bigl(-S_s\bigr)ds}\biggr)
 \end{split}     \end{equation}
 almost surely.
 This proves~\eqref{survprob}.
 Now assume that $\sigma_e>0$ and recall that $\beta=-\tfrac{2\al}{\sigma_e^2}$.
 Note that
 \begin{equation} \label{eq:scaling_BM}
   \Big(-\alpha s - \sigma_e W^{(e)}_s\Big)_{0\leq s\leq t}\stackrel{d}{=}
   \bigg(-2\frac{2\al}{\sigma_e^2}\frac{\sigma_e^2}{4}s+2W^{(e)}_{\frac{\sigma_e^2}{4}s}\bigg)_{0\leq s\leq t}.
 \end{equation}
 Inserting $S_s=\alpha s+\sigma_e W^{(e)}_s$, $s\leq t$, into~\eqref{eq:proof.survival.prob},
 applying~\eqref{eq:scaling_BM} and 
 the time substitution $u:=\tfrac{\sigma_e^2}{4}s$,
 we get that
 \begin{equation}  \begin{split}
   \P^z\Bigl(Z_t>0\,\big|\,(S_s)_{s\leq t}\Bigr)
   &=1-\exp\biggl(-\frac{z}{\int_0^t\frac{\sigma_b^2}{2}
      \exp\bigl(-\al s-\sigma_e W^{(e)}_s\bigr)ds}\biggr)\\
   &\eqd 1-\exp\biggl(-\frac{z}{\int_0^t\frac{\sigma_b^2}{2}
      \exp\Bigl(-2\frac{2\al}{\sigma_e^2}\frac{\sigma_e^2}{4}s+2W^{(e)}_{\frac{\sigma_e^2}{4}s}\Bigr)ds}\biggr)\\
   &= 1-\exp\biggl(-\frac{z}{\int_0^{\sigma_e^2 t/4}\frac{\sigma_b^2}{2}\frac{4}{\sigma_e^2}
      \exp\bigl(2\beta u+2W^{(e)}_{u}\bigr)du}\biggr)\\
   &= 1-\exp\biggl(-\frac{z\sigma_e^2}{2\sigma_b^2 A_{\sigma_e^2t/4}^{(\beta)}}\biggr)
    = f\biggl(\frac{z}{2 A_{\sigma_e^2t/4}^{(\beta)}}\biggr)
 \end{split}     \end{equation}
 almost surely.
 Taking expectations, we arrive at
\begin{equation} \label{eq:proof.survneu} 
\P^z\big(Z_t>0\big)
 =\E\bigg[f\bigg(\frac{z}{2A^{(\beta)}_{t\sigma_e^2/4}}\bigg)\bigg]
 =\int_0^{\infty}f(za) p_{t\sigma_e^2/4,\beta}(a)da.
\end{equation}
The last step is equation (2.5) in Matsumoto and Yor (2003)\nocite{MatsumotoYor2003}
which requires $\beta>-1$.
\end{proof}
\section{Asymptotic behavior of the survival probability}
\label{sec:asymptotic.of.the.survival.probability}
Throughout this section, let $(Z_t,S_t)_{t\geq0}$
be the strong solution of~\eqref{eq:BDRE} and $f$ be defined as in (\ref{deff}).
\subsection{General results}
%
Recall $A_t^{(\gamma)}$, $t\in[0,\infty)$, $\gamma\in\R$, from~\eqref{eq:At}.
First we provide sufficient conditions under which
$\E\big[z/A_t^{(\gamma)}]$ and  $\E\big[(1-\exp(-z/A_t^{(\gamma)}))\big]$, $z\geq 0$,
have the same asymptotics as $t\to\infty$ for $\gamma\in\R$.
We will use this for the intermediately and strongly subcritical regimes.
\begin{lemma}\label{hilf}
Let $(Y_t)_{t\geq1}$ be a family of non-negative random variables.
Assume that there exist a function
$c\colon[1,\infty)\to[0,\infty)$ and a constant $a\in[0,\infty)$ such that
 $\lim_{t\rightarrow\infty} c_t\E\big[Y_t\big]= a$ and $\limsup_{t\rightarrow\infty} c_t\E\big[Y^2_t\big]=0$.
Then we have that
 \begin{align} \label{eq:hilf}
  \lim_{t\rightarrow\infty} c_t\E\Big[1-\exp\big(-\ld Y_t\big)\Big]=\ld a
 \end{align}
 for every $\ld \in[0,\infty)$.
\end{lemma}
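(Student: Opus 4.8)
The plan is to sandwich $1-e^{-\lambda Y_t}$ between a first--order and a second--order polynomial bound in $Y_t$, take expectations, multiply by $c_t$, and let $t\to\infty$; everything then follows from the hypotheses by a squeeze. The case $\lambda=0$ is trivial, since both sides of~\eqref{eq:hilf} vanish, so one fixes $\lambda\in(0,\infty)$.

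First I would record the elementary two--sided estimate
\[
  \lambda y-\tfrac{\lambda^2}{2}\,y^2\;\le\;1-e^{-\lambda y}\;\le\;\lambda y
  \qquad\text{for all }y\in[0,\infty).
\]
The upper bound is just $e^{-x}\ge 1-x$. For the lower bound, set $g(x):=1-e^{-x}-x+\tfrac{x^2}{2}$; then $g(0)=0$, $g'(x)=e^{-x}-1+x$, $g'(0)=0$, and $g''(x)=1-e^{-x}\ge 0$ on $[0,\infty)$, so $g'\ge 0$ and hence $g\ge 0$ there; applying this with $x=\lambda y$ gives the claim. Substituting $y=Y_t$, taking expectations (the middle term lies in $[0,1]$, and for all $t$ large enough that $c_t\E[Y_t^2]$ is finite one has $\E[Y_t]\le\sqrt{\E[Y_t^2]}<\infty$ whenever $c_t>0$, while if $c_t=0$ the estimate below is $0\le 0\le 0$), and multiplying by $c_t\ge 0$ yields, for all such $t$,
\[
  \lambda\,c_t\E[Y_t]-\tfrac{\lambda^2}{2}\,c_t\E[Y_t^2]
  \;\le\;c_t\,\E\!\bigl[1-e^{-\lambda Y_t}\bigr]
  \;\le\;\lambda\,c_t\E[Y_t].
\]

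Finally I would pass to the limit $t\to\infty$. By hypothesis $c_t\E[Y_t]\to a$, so the upper bound converges to $\lambda a$; and since $c_t\E[Y_t^2]\ge 0$ with $\limsup_{t\to\infty}c_t\E[Y_t^2]=0$, we get $c_t\E[Y_t^2]\to 0$, so the lower bound converges to $\lambda a-\tfrac{\lambda^2}{2}\cdot 0=\lambda a$ as well. The squeeze theorem then gives $c_t\,\E[1-e^{-\lambda Y_t}]\to\lambda a$, which is~\eqref{eq:hilf}. There is no substantial obstacle in this argument; the only points needing (entirely routine) care are the second--order Taylor estimate $1-e^{-x}\ge x-\tfrac{x^2}{2}$ and the bookkeeping ensuring that $\E[Y_t]-\tfrac{\lambda}{2}\E[Y_t^2]$ is meaningful for the relevant large $t$, which follows from $\E[Y_t^2]<\infty$ via Cauchy--Schwarz.
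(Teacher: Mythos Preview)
Your proof is correct and follows essentially the same approach as the paper: both use the elementary two-sided estimate $\lambda y - \tfrac{\lambda^2}{2}y^2 \le 1-e^{-\lambda y}\le \lambda y$, take expectations, multiply by $c_t$, and squeeze. Your version is slightly more careful in justifying the Taylor bound and the finiteness of the moments, but the argument is otherwise identical.
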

\begin{proof}
 Let $\ld \in[0,\infty)$ be fixed.
The upper bound follows from $1-e^{-\ld x}\leq \ld x$, $x\geq 0$ and from $\lim_{t\rightarrow\infty} c_t\E\big[Y_t\big]= a$. 
The lower bound results from $1-e^{-\lambda x}\geq \lambda x-\frac{\lambda^2x^2}{2}$ for every $x\geq0$. Applying this, we get that
 \begin{align}
    c_t \E\Big[1-\exp(-\ld Y_t)\Big] 
    &\geq c_t\E\Big[\ld Y_t-\frac{\ld^2Y_t^2}{2}\Big]
    = \ld  c_t \E[Y_t] -c_t\frac{\ld^2\E\big[Y_t^2\big]}{2} \label{2506}
 \end{align}
 for every $t\geq0$.
The assumptions of the lemma then yield that
 \begin{align}
   \liminf_{t\rightarrow\infty}c_t\E\Big[1-\exp\big(-\ld  Y_t\big)\Big]
   \geq \liminf_{t\rightarrow\infty} \Big(\ld  c_t \E[Y_t] -\frac{\ld^2c_t\E\big[Y_t^2\big]}{2}\Big) = \ld  a ,
 \end{align}
 which is the lower bound in~\eqref{eq:hilf}.
\end{proof}

The Cameron-Martin-Girsanov theorem
(e.g.,~Theorem IV.38.5 in~\cite{RogersWilliams2000b})
for a standard Brownian motion $(W_s)_{s\geq 0}$
asserts that
\begin{align}
\E\Big[h\big((W_s+\theta s)_{0\leq s\leq t}\big)\Big]
= \E\Big[\exp(\theta W_t-\theta^2 t/2) h\big( (W_s)_{0\leq s\leq t}\big)\Big] \label{cameron}
\end{align}
for every $\theta\in\R$, every measurable function
$h\colon \C\left([0,t],\R\right)\to[0,\infty)$ and every $t\in[0,\infty)$.
The next lemma will allow us to deduce the intermediately and the strongly subcritical case
from the critical and  supercritical case, respectively,
by changing the drift through (\ref{cameron}).
\begin{lemma}\label{lem_hilf_intstrong}
Let $\gamma\in\R$ and let $(A^{(\gamma)}_t)_{t\geq 0}$ be defined as in (\ref{eq:At}). Then we have that
\begin{align}
\E\Big[\frac{1}{2A_t^{(\gamma)}}\Big]&= e^{-(2\gamma-2)t}\E\Big[\frac{1}{2A_t^{(-(\gamma-2))}}\Big]\label{ew0608}
\\
\E\Big[\frac{1}{(2A_t^{(\gamma)})^2}\Big]&\leq e^{-(2\gamma-2)t}\E\Big[\frac{1}{2A_{t/2}^{(\gamma-2)}}\Big]\cdot\E\Big[\frac{1}{2A_{t/2}^{(-(\gamma-2))}}\Big],
\end{align}
for every $t\in(0,\infty)$.
\end{lemma}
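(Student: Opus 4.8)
The plan is to obtain both displays from two standard operations on the exponential functional $A_t^{(\gamma)}$: time-reversal of the driving Brownian motion, and the Cameron--Martin--Girsanov identity~\eqref{cameron}. Write $(W_s)_{s\ge0}$ for the standard Brownian motion appearing in~\eqref{eq:At}, and, for a generic Brownian path $B$, put $A_t^{(\gamma)}(B):=\int_0^t\exp(2\gamma s+2B_s)\,ds$. All the manipulations below involve only nonnegative quantities, so they are valid as identities or inequalities in $[0,\infty]$ and no integrability needs to be verified beforehand.

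First I would prove the identity~\eqref{ew0608}. Fix $t>0$ and set $B_s:=W_{t-s}-W_t$ for $0\le s\le t$, which is again a standard Brownian motion. Substituting $s\mapsto t-s$ in $A_t^{(\gamma)}(W)$ and using $W_{t-s}=B_s+W_t$ together with $W_t=-B_t$ gives the pathwise identity $A_t^{(\gamma)}(W)=e^{2\gamma t-2B_t}\,A_t^{(-\gamma)}(B)$. Taking reciprocals, multiplying by $\tfrac12$, and taking expectations (using that $B$ is a standard Brownian motion) yields $\E[1/(2A_t^{(\gamma)})]=e^{-2\gamma t}\,\E[e^{2W_t}/(2A_t^{(-\gamma)}(W))]$. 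Now I would apply~\eqref{cameron} with drift $\theta=2$ to absorb the exponential weight: for the functional $h\colon(w_s)_{s\le t}\mapsto 1/\big(2\int_0^t e^{-2\gamma s+2w_s}\,ds\big)$ one checks $h((w_s+2s)_{s\le t})=1/(2A_t^{(2-\gamma)})$, and since $\theta^2t/2=2t$ this gives $\E[1/(2A_t^{(\gamma)})]=e^{-2\gamma t}e^{2t}\,\E[1/(2A_t^{(2-\gamma)})]=e^{-(2\gamma-2)t}\,\E[1/(2A_t^{-(\gamma-2)})]$, which is~\eqref{ew0608}.

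For the inequality I would split the integral at $t/2$: $A_t^{(\gamma)}(W)=A_{t/2}^{(\gamma)}(W)+R$ with $R:=\int_{t/2}^t e^{2\gamma s+2W_s}\,ds>0$ almost surely. The elementary bound $(a+b)^2\ge ab$ for $a,b\ge0$ gives $1/(2A_t^{(\gamma)})^2\le 1/\big((2A_{t/2}^{(\gamma)}(W))\cdot(2R)\big)$. Substituting $s=t/2+u$ and writing $W_{t/2+u}=W_{t/2}+W'_u$, where $W'_u:=W_{t/2+u}-W_{t/2}$ is a standard Brownian motion independent of $(W_s)_{s\le t/2}$, one finds $R=e^{\gamma t+2W_{t/2}}\,A_{t/2}^{(\gamma)}(W')$; hence
\begin{equation*}
  \frac{1}{(2A_t^{(\gamma)})^2}\le e^{-\gamma t}\cdot\frac{e^{-2W_{t/2}}}{2A_{t/2}^{(\gamma)}(W)}\cdot\frac{1}{2A_{t/2}^{(\gamma)}(W')}.
\end{equation*}
The first of the last two factors is measurable with respect to $(W_s)_{s\le t/2}$ and the second with respect to $W'$, so upon taking expectations the product factorizes. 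To the first factor I would apply~\eqref{cameron} with $\theta=-2$ on $[0,t/2]$ (here $\theta^2(t/2)/2=t$ and the shifted functional becomes $1/(2A_{t/2}^{(\gamma-2)})$), turning it into $e^{t}\,\E[1/(2A_{t/2}^{(\gamma-2)})]$; to the second factor, which equals $\E[1/(2A_{t/2}^{(\gamma)})]$, I would apply the already-proved identity~\eqref{ew0608} with $t/2$ in place of $t$, obtaining $e^{-(\gamma-1)t}\,\E[1/(2A_{t/2}^{-(\gamma-2)})]$. Multiplying the three prefactors gives $e^{-\gamma t}\cdot e^{t}\cdot e^{-(\gamma-1)t}=e^{-(2\gamma-2)t}$, which is the asserted inequality.

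The delicate points are bookkeeping rather than conceptual: keeping precise track of which $\sigma$-algebra each factor is measurable with respect to after the split at $t/2$ (so that the factorization is literally valid, including the role of the boundary value $W_{t/2}$), and getting the time-reversal boundary term $W_t=-B_t$ and the Cameron--Martin exponents $\theta^2T/2$ exactly right. I expect the single hardest step to be establishing the time-reversal identity $A_t^{(\gamma)}(W)=e^{2\gamma t-2B_t}A_t^{(-\gamma)}(B)$ behind~\eqref{ew0608}; once that is in place, the rest is a routine combination of~\eqref{cameron} with the decomposition at $t/2$.
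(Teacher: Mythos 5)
Your proof is correct and uses the same ingredients as the paper's: the Cameron--Martin--Girsanov identity~\eqref{cameron} with drift $\pm 2$, Brownian time-reversal, the split of the integral at $t/2$, and the elementary bound $(a+b)^2\geq ab$. The only difference is the order in which Girsanov and time-reversal are applied (you reverse time first and then change drift, while the paper does the opposite), and that for the second-moment bound you recycle the already-proved identity~\eqref{ew0608} on one of the two independent factors instead of redoing the time-reversal step inline; both are cosmetic reorganizations of the paper's argument.
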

\begin{proof}
Let $t\in(0,\infty)$ and $\gamma\in\R$ be fixed.
Applying (\ref{cameron}) with $\theta=2$, we obtain that
\begin{align}
&\E\Big[\frac{1}{2A_t^{(\gamma)}}\Big]= \E\Big[\frac{1}{2\int_0^t \exp\big(2(\gamma s+W^{(e)}_s)\big)ds}\Big]=  \E\Big[\frac{\exp(2W^{(e)}_t-2^2 t/2)}{2\int_0^t \exp\big(2((\gamma-2) s+W^{(e)}_s)\big)ds}\Big]\nonumber\\
&=  e^{-2t}\E\Big[\frac{1}{2\int_0^t \exp\big(2((\gamma-2) s+W^{(e)}_s-W^{(e)}_t)\big)ds}\Big]=  e^{-2t}\E\Big[\frac{1}{2\int_0^t \exp\big(2((\gamma-2) s+W^{(e)}_{t-s})\big)ds}\Big]\nonumber\\
&=  e^{-(2\gamma-2)t}\E\Big[\frac{1}{2\int_0^t \exp\big(2(-(\gamma-2)u +W^{(e)}_u)\big)du}\Big]=e^{-(2\gamma-2)t}\E\Big[\frac{1}{2A_t^{(-(\gamma-2))}}\Big],\label{haupt}\nonumber
\end{align}
where we used the substitution $u:=t-s$.
This is the first claim of the lemma.

For the second moment of $1/(2A_t^{(\gamma)})$, we use analogous arguments to obtain that
\begin{align}
\E\Big[\frac{1}{(2A_t^{(\gamma)})^2}\Big]= \E\Big[\frac{1}{\big(2\int_0^t \exp\big(2(\gamma s+W^{(e)}_s)\big)ds\big)^2}\Big]
=  \E\Big[\frac{\exp(2W^{(e)}_t-2^2 t/2)}{\big(2\int_0^t \exp\big(2((\gamma-2) s+W^{(e)}_s)\big)ds\big)^2}\Big].\nonumber
\end{align}
A rough estimate allows us to split the integral into two independent parts:
\begin{equation}  \begin{split}
\E\Big[&\frac{1}{(2A_t^{(\gamma)})^2}\Big]= e^{-2t}\E\Big[\frac{\exp(2W_t^{(e)})}{\big(2\int_0^t \exp\big(2((\gamma-2) s+W^{(e)}_{s})\big)ds\big)^2}\Big] \\
&\leq e^{-2t} \E\bigg[\frac{1}{2\int_0^{t/2} \exp\big(2((\gamma-2)s +W^{(e)}_s)\big)ds}\cdot\frac{\exp(2W_t^{(e)})}{2\int_{t/2}^t \exp\big(2((\gamma-2)s +W^{(e)}_s)\big)ds}\bigg]\\
&= e^{-2t}\E\Big[\frac{1}{2A_{t/2}^{(\gamma-2)}}\Big]\cdot \mathbb{E}\bigg[\frac{1}{2\int_{t/2}^t \exp\big(2((\gamma-2)s +W^{(e)}_s-W_t^{(e)}))ds}\bigg]\\
\end{split}     \end{equation}
Substituting $u:=t-s$ yields that
\begin{align}
\begin{split}
\E\Big[&\frac{1}{(2A_t^{(\gamma)})^2}\Big]\leq  e^{-2t}\E\Big[\frac{1}{2A_{t/2}^{(\gamma-2)}}\Big]\cdot \mathbb{E}\bigg[\frac{\exp(-2(\gamma-2)t)}{2\int_{t/2}^t 
\exp\big(2((\gamma-2)(s-t) +W^{(e)}_{t-s})\big)ds}\bigg]\\
&=e^{-2t-2(\gamma-2)t}\E\Big[\frac{1}{2A_{t/2}^{(\gamma-2)}}\Big]\cdot \mathbb{E}\bigg[\frac{1}{2\int_{0}^{t/2} \exp\big(2(-(\gamma-2)u +W^{(e)}_u)\big)ds}\bigg] \\
&=e^{-(2\gamma-2)t} \E\Big[\frac{1}{2A_{t/2}^{(\gamma-2)}}\Big]\cdot\E\Big[\frac{1}{2A_{t/2}^{(-(\gamma-2))}}\Big],
\end{split}
\end{align}
which is the second claim of the lemma.
\end{proof}

\subsection{The supercritical regime}
In this subsection, we will prove the result of Theorem \ref{asymptotic_survival}
in the case of $(S_t)_{t\geq0}$ having positive drift $\alpha>0$.
Here we will prove for $\alpha>0$ that, starting from $z>0$, 
the probability of survival is strictly positive 
in the limit as $t\to\infty$ and for every $z\in[0,\infty)$ is given by
 \begin{equation}  \begin{split} \label{eq:survival_asymptotic_alsup}
   \lim_{t\rightarrow\infty} \P^z(Z_t>0) 
   =\E\Big[1-\exp\Big(-\frac{\sigma_e^2}{\sigma_b^2}\, z\, G_{\frac{2\alpha}{\sigma_e^2}}\Big)\Big]
   =1-\left(1+\frac{\sigma_e^2}{\sigma_b^2}\cdot z\right)^{-\frac{2\al}{\sigma_e^2}},
 \end{split}     \end{equation}
where
\begin{equation}  \label{eq:gamma.distribution}
  \P(G_\nu\in dx)= \frac{1}{\Gamma(\nu)}x^{\nu-1}e^{-x}\,dx,\quad x\in(0,\infty).
\end{equation}
The last equality in~\eqref{eq:survival_asymptotic_alsup}
follows from the explicit formula for the Laplace transform of the gamma-distribution 
(e.g.~\cite{Severini2005}
), that is, for every $\ld\geq 0$ and $\nu\in(0,\infty)$,
$\mathbb{E}[\exp(-\ld G_{\nu})] = \frac{1}{(1+\ld)^\nu}$.
We use the following result of
Dufresne (1990)\nocite{Dufresne1990} (see also~\cite{Yor1992JAP}).
\begin{lemma}\label{lem_dufresne}
Let $(B_t)_{t\geq0}$ be a standard Brownian motion.
For all $a\neq 0$ and $b>0$,
 \begin{equation}
   \Big(\int_0^\infty \exp(a B_s-bs)\,ds\Big)^{-1}
   \overset{\rm d}{=} \frac{a^2}{2}G_{\frac{2b}{a^2}}
 \end{equation}
 where $G_\nu$
 has distribution~\eqref{eq:gamma.distribution}.
\end{lemma}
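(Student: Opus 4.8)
The plan is to prove the identity by identifying the distribution of the reciprocal perpetual integral functional via a stationarity/Markov argument, which is the cleanest self-contained route. First I would reduce to a standard form: writing $a B_s - bs = a(B_s - (b/a) s)$ and using the scaling $B_s \eqd |a| B_{s}$ together with a deterministic time change, one sees it suffices to treat $\int_0^\infty \exp(2(W_s - \mu s))\,ds$ for $\mu = b/a^2 > 0$ and to show its reciprocal is $2 G_{2\mu}$; the general case follows by tracking the constant $a^2/2$. Since $\mu>0$, this integral is almost surely finite (the integrand decays exponentially by the law of large numbers for Brownian motion), so the object is well-defined.

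Next I would set $I := \int_0^\infty \exp(2(W_s-\mu s))\,ds$ and exploit the renewal-type decomposition
\begin{equation}
  I = \int_0^\infty e^{2(W_s-\mu s)}\,ds,\qquad
  I \eqd \int_0^t e^{2(W_s-\mu s)}\,ds + e^{2(W_t-\mu t)}\,I',
\end{equation}
where $I'$ is an independent copy of $I$; differentiating in $t$ at $t=0$ (or applying It\^o's formula to a candidate law) yields that the density $p$ of $X := 1/I$ must satisfy the stationarity equation for the diffusion generated by the SDE obeyed by $X_t := 1/\int_0^t e^{2(W_s-\mu s)}\,ds$. Concretely, It\^o's formula shows $X_t$ solves $dX_t = (2(\mu+1) X_t - 2 X_t^2)\,dt - 2 X_t\,dW_t$ up to the usual reparametrisation, an autonomous one-dimensional diffusion; its unique stationary density solves the ODE $\tfrac12 (b(x) p(x))' = (a(x)p(x))$ with $a(x) = 2(\mu+1)x - 2x^2$ and $b(x) = 4x^2$, and one checks directly that $p(x) \propto x^{2\mu-1} e^{-2x}$ solves it, i.e. $X \eqd 2 G_{2\mu}$ after normalisation. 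Alternatively — and this is perhaps the shortest honest option given the scope of the paper — I would simply invoke the cited references (Dufresne (1990), and Yor (1992) in~\cite{Yor1992JAP}) and reproduce the moment computation: using the known formula $\E[(A_\infty^{(-\mu)})^{-s}] = \frac{\Gamma(\mu+s)}{\Gamma(\mu)} 2^{-s} \cdot (\text{something})$, match it term by term with the moments of the gamma law, $\E[(2G_{2\mu})^{-s}] = 2^{-s}\Gamma(2\mu-s)/\Gamma(2\mu)$ — wait, this requires care — so in fact the moment route needs the Mellin transform of $A_\infty$, which is classical.

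I expect the main obstacle to be establishing the moment/Mellin identity or, in the diffusion approach, rigorously justifying that $X_t$ converges in distribution to its stationary law and that this stationary law coincides with the law of $X = 1/I$ (one needs ergodicity of the one-dimensional diffusion together with the observation that $1/\int_0^t \to 1/\int_0^\infty$ a.s.). Both are standard but slightly delicate; since the paper cites Dufresne (1990) and Yor, I would present the statement as a direct citation with the scaling reduction made explicit, namely: by Brownian scaling $(B_s)_{s\ge0} \eqd (|a| \tilde B_s)_{s\ge0}$, writing $a B_s - bs = |a|(\tilde B_s - (b/|a|^2) s |a|)$ and rescaling time $s \mapsto s/|a|^2$ inside the integral gives $\int_0^\infty e^{aB_s - bs}ds \eqd \tfrac{1}{|a|^2/2}\cdot \tfrac{1}{2}\int_0^\infty e^{2(\tilde B_u - (b/a^2) u)}du$... and here I would just cite that the reciprocal of $\int_0^\infty e^{2(\tilde B_u - \nu u)}du$ has law $2G_{2\nu}$ (Dufresne), yielding the claimed $\tfrac{a^2}{2} G_{2b/a^2}$ after collecting constants. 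This keeps the lemma at the level of rigour appropriate to its role as an external input.
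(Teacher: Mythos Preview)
The paper does not prove this lemma at all: it is stated with the preamble ``We use the following result of Dufresne (1990) (see also~\cite{Yor1992JAP})'' and then used without further argument. Your proposal ultimately lands in the same place --- cite Dufresne and Yor, possibly with the scaling reduction to the normalized form $A_\infty^{(-\mu)}$ made explicit --- which is exactly what the paper does, only the paper does not even spell out the scaling.

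Two remarks on the exploratory sketches you discard along the way. First, in the diffusion approach, the process $X_t = 1/\int_0^t e^{2(W_s-\mu s)}\,ds$ is \emph{not} an autonomous one-dimensional diffusion: $dX_t = -X_t^2\, e^{2(W_t-\mu t)}\,dt$ depends on $W_t$ through more than $X_t$, so the stationary-density ODE you write down is not justified as stated. The standard way to make this approach work is to look instead at $Y_t = e^{-2(W_t-\mu t)}\int_0^t e^{2(W_s-\mu s)}\,ds$, which \emph{is} a diffusion, and then use time-reversal to relate its stationary law to that of $A_\infty^{(-\mu)}$. Second, your moment-matching sketch is indeed delicate (as you note) and would need the full Mellin transform of $A_\infty^{(-\mu)}$, which is essentially the content of the cited references. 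Since neither sketch is carried through and both end in citation anyway, your effective proposal coincides with the paper's treatment.
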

\begin{proof}[{Proof of the supercritical case of Theorem \ref{asymptotic_survival}}]
 Let $z\geq0$ be fixed.
 As $1-e^{-x}\leq 1$, $x\geq 0$,
 we may apply the dominated convergence theorem.
 Using (\ref{survprob}) in Corollary \ref{c:survival_prob} and continuity of $f$, we get that
 \begin{equation}  \begin{split}
   \lim_{t\rightarrow\infty} \P^z(Z_t>0)
   &=\lim_{t\rightarrow\infty}
     \E\bigg[1-\exp\bigg(-\frac{2 z}{\sigma_b^2\int_0^t\exp\big(-\alpha s-\sigma_e W^{(e)}_s\big) ds}\bigg)\bigg]\\
   &=\E\Big[1-\exp\Big(-\frac{\sigma_e^2}{\sigma_b^2}\, z\, G_{\frac{2\alpha}{\sigma_e^2}}\Big)\Big].
 \end{split}     \end{equation}
\end{proof}

\subsection{The critical regime}
Next we study the case $\alpha=0$
and prove (\ref{thcrit}).
For simplicity, denote $A_t:=A_t^{(0)}$ for $t\geq0$.
The next lemma yields the convergence.
\begin{lemma}\label{l:convergencecrit}
Let $g\colon[0,\infty)\rightarrow [0,\infty)$ be a Borel measurable function.
Assume for some constants $c,p\in(0,\infty)$
that $g(x)\leq c\, x^p$ for every $x\geq 0$. Then we get that 
\begin{align}
\lim_{t\rightarrow\infty} \sqrt{t}\cdot \E\Big[g\big(\frac{1}{2A_{t}}\big)\Big] = \int_0^\infty g(a) \cdot \frac{1}{\sqrt{2\pi}}\frac{e^{-a}}{a} da<\infty.
\end{align}
\end{lemma}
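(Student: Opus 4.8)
I want the asymptotics of $\sqrt t\,\E[g(1/(2A_t))]$ where $A_t=\int_0^t e^{2W^{(e)}_s}\,ds$ is the exponential functional of critical Brownian motion. The natural strategy is to get my hands on the density of $1/(2A_t)$, or at least on a sharp tail/approximation of it, and then integrate against $g$, using the growth bound $g(x)\le c\,x^p$ to justify dominated convergence after rescaling by $\sqrt t$. The cleanest route uses the Matsumoto--Yor / Yor identities already recalled in the paper: by the joint-density formula \eqref{eq:joint.density.explicit} with $\beta=0$ (i.e.\ $a=\frac1{2A_t^{(0)}}$ corresponds to $p_{t,0}$ from \eqref{densitity_general}), one has an explicit integral representation of the density $p_{t,0}(a)$ of $1/(2A_t)$. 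So first I would write
\[
  \sqrt t\,\E\Big[g\big(\tfrac{1}{2A_t}\big)\Big]
  =\int_0^\infty g(a)\,\sqrt t\, p_{t,0}(a)\,da,
\]
and the whole problem reduces to showing $\sqrt t\, p_{t,0}(a)\to \frac{1}{\sqrt{2\pi}}\,\frac{e^{-a}}{a}$ pointwise on $(0,\infty)$ together with a dominating bound of the form $\sqrt t\,p_{t,0}(a)\le C\,h(a)$ with $\int_0^\infty a^p h(a)\,da<\infty$, uniformly for $t\ge 1$.

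\textbf{Carrying it out.} For the pointwise limit I would take the representation \eqref{densitity_general} at $\beta=0$,
\[
  p_{t,0}(a)=\frac{e^{\pi^2/2t}}{\sqrt2\,\pi^2\sqrt t}\,\Gamma(1)\,e^{-a}a^{-1/2}
  \int_0^\infty\!\!\int_0^\infty e^{-\xi^2/2t}\,s^{-1/2}e^{-as}\,
  \frac{\sinh\xi\cosh\xi\,\sin(\pi\xi/t)}{(s+\cosh^2\xi)^{1}}\,d\xi\,ds,
\]
multiply by $\sqrt t$, and let $t\to\infty$. Here $e^{\pi^2/2t}\to1$; in the inner integral the key is $\sin(\pi\xi/t)\sim \pi\xi/t$ and $e^{-\xi^2/2t}\to1$, so $\sqrt t\,p_{t,0}(a)$ should converge (after a substitution $\xi=\sqrt t\, y$ or directly by dominated convergence on the $\xi$-integral, using $\sin(\pi\xi/t)\le \pi\xi/t$) to a constant multiple of $e^{-a}a^{-1/2}\int_0^\infty\!\int_0^\infty s^{-1/2}e^{-as}\frac{\sinh\xi\cosh\xi\cdot\xi}{(s+\cosh^2\xi)}\,d\xi\,ds$; this is exactly the shape of $\phi_\beta$ at $\beta=0$ from \eqref{defphibeta} up to constants, and one evaluates the resulting double integral in closed form to land on $\frac{1}{\sqrt{2\pi}}\frac{e^{-a}}{a}$. (An alternative, perhaps cleaner, derivation of the pointwise limit: use the known representation $\P(A_t\in du\mid W^{(e)}_t=x)$ and the large-$t$ Gaussian behaviour of $W^{(e)}_t/\sqrt t$ together with the scaling $A_t\approx$ (something of order $e^{2W^{(e)}_t}$ on the supercritical excursion), but the explicit-density route is the safest.) For the finiteness of the limiting integral, $\int_0^\infty g(a)\frac{1}{\sqrt{2\pi}}\frac{e^{-a}}{a}\,da\le \frac{c}{\sqrt{2\pi}}\int_0^\infty a^{p-1}e^{-a}\,da=\frac{c\,\Gamma(p)}{\sqrt{2\pi}}<\infty$, which is where the hypothesis $g(x)\le c\,x^p$ with $p>0$ is used.

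\textbf{The main obstacle: the uniform dominating bound.} The delicate part is producing $h$ with $\sqrt t\,p_{t,0}(a)\le C\,h(a)$ for all $t\ge1$ and $\int_0^\infty a^p h(a)\,da<\infty$. Near $a=0$ the factor $a^{-1/2}$ in \eqref{densitity_general} is integrable against $a^p$ ($p>0$), but one must check the inner double integral stays bounded as $a\downarrow 0$ uniformly in $t$ — here the $s^{-1/2}e^{-as}$ and the $1/(s+\cosh^2\xi)$ combine to give something like $a^{-1/2}$ behaviour in the $s$-integral near $a=0$, so the net near-zero behaviour of $\sqrt t\,p_{t,0}(a)$ is like $a^{-1}$ (matching the limit $e^{-a}/a$), which is borderline; one needs $p\ge 1$? — no: for the \emph{limit} integral $a^{-1}\cdot a^p=a^{p-1}$ is integrable for any $p>0$, but for a \emph{uniform} bound near $0$ one can instead split $\int_0^\infty=\int_0^1+\int_1^\infty$ and on $[0,1]$ bound $g(a)\le c$ (since $a^p\le 1$), so one only needs $\int_0^1 \sup_{t\ge1}\sqrt t\,p_{t,0}(a)\,da<\infty$, i.e.\ integrable-$a^{-1}$... which fails! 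So near $a=0$ I should not use a crude sup; instead use $g(a)\le c\,a^p$ honestly and bound $\sqrt t\,p_{t,0}(a)\le C a^{-1}$ uniformly, giving $\int_0^1 a^p\cdot a^{-1}\,da<\infty$ for $p>0$. For large $a$, the factor $e^{-as}$ forces the $s$-integral to concentrate near $s=0$ and decay; one gets $\sqrt t\,p_{t,0}(a)\le C e^{-ca}$ for large $a$ uniformly in $t$ (using $e^{-a}$ already present plus controlling the inner integral's $a$-dependence), and then $\int_1^\infty c\,a^p e^{-ca}\,da<\infty$. The technical heart is therefore a pair of uniform-in-$t$ estimates on the double integral in \eqref{densitity_general} at $\beta=0$: one for $a\in(0,1]$ showing it is $O(a^{-1/2})$, one for $a\ge1$ showing exponential decay; both follow from elementary bounds ($\sin(\pi\xi/t)\le\min(1,\pi\xi/t)$, $e^{-\xi^2/2t}\le1$, splitting the $\xi$-range at $\xi\asymp1$, and monotone/Laplace estimates on the $s$-integral), but assembling them carefully is the real work. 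Once the pointwise limit and dominating bound are in hand, dominated convergence gives the claim and identifies the constant as $\frac{1}{\sqrt{2\pi}}$ via the closed-form evaluation of the limiting double integral.
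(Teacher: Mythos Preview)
Your overall strategy---write $\sqrt t\,\E[g(1/(2A_t))]=\int_0^\infty g(a)\sqrt t\,p_{t,0}(a)\,da$, identify the pointwise limit of $\sqrt t\,p_{t,0}(a)$, and justify via dominated convergence using $g(a)\le ca^p$---is exactly the paper's. But you chose the wrong density representation, and that creates a real gap.

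You work with formula~\eqref{densitity_general} at $\beta=0$. Your claimed pointwise limit is ``$\phi_\beta$ at $\beta=0$'', namely a constant times
\[
  e^{-a}a^{-1/2}\int_0^\infty\!\!\int_0^\infty s^{-1/2}e^{-as}\,\frac{\sinh\xi\,\cosh\xi\cdot\xi}{s+\cosh^2\xi}\,d\xi\,ds.
\]
This double integral is $+\infty$: for large $\xi$ the ratio $\sinh\xi\cosh\xi/(s+\cosh^2\xi)\to 1$, so the $\xi$-integrand behaves like $\xi$, which is not integrable. (This is precisely why Lemma~\ref{l:convergence} is stated only for $\gamma>0$.) The mechanism you describe---replace $\sin(\pi\xi/t)$ by $\pi\xi/t$, then take the limit under the integral---therefore fails. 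In the actual formula the factors $e^{-\xi^2/2t}$ and $\sin(\pi\xi/t)$ interact on the scale $\xi\asymp\sqrt t$, and extracting the correct finite limit $\tfrac{1}{\sqrt{2\pi}}e^{-a}/a$ from~\eqref{densitity_general} requires a genuinely different (and more delicate) argument than the one you sketched; the dominating bound you propose is similarly unavailable, since the candidate majorant already diverges.

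The paper avoids this entirely by switching to Dufresne's single-integral density (Theorem~4.1 of Dufresne~2001), recorded as~\eqref{eq:density_area}:
\[
  p_t(a)=\frac{\sqrt{2}\,e^{\pi^2/8t}}{\pi\sqrt t}\,\frac{1}{\sqrt a}\int_0^\infty \exp\!\Big(-a\cosh^2 y-\frac{y^2}{2t}\Big)\cosh y\,\cos\!\Big(\frac{\pi y}{2t}\Big)\,dy.
\]
Here the oscillatory factor is $\cos(\pi y/2t)\to 1$ rather than $\sin(\pi\xi/t)\to 0$, and the limiting integrand $a^{-1/2}e^{-a\cosh^2 y}\cosh y$ is manifestly integrable. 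Dominated convergence is then immediate (the paper bounds via $|\cos|\le 1$, $\cosh y\le e^y$, $\cosh y\ge e^y/2$, and $g(a)\le ca^p$), and the substitution $x=\sqrt{2a}\sinh y$ collapses the limit to $\tfrac{1}{\sqrt{2\pi}}e^{-a}/a$. The key move you are missing is this switch of density representation; with it, everything you wrote in the ``plan'' paragraph goes through cleanly.
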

\begin{proof}
Let $g$ be a function fulfilling the conditions of the lemma. Instead of (\ref{densitity_general}),
we will use a simpler expression for the density of $A_t$ for $t\in(0,\infty)$.
According to Theorem 4.1 of Dufresne (2001)\nocite{Dufresne2001},
the density function of $1/(2A_t)$ is given by
\begin{align}   \label{eq:density_area}
 \P\Big(\frac{1}{2A_t}\in da\Big)
&=\frac{\sqrt{2}e^{\frac{\pi^2}{8t}}}{\sqrt{\pi^2 t}}\frac{1}{\sqrt{a}}
 \int_0^\infty\exp\Big(-a\big(\cosh(y)\big)^2-\frac{y^2}{2t}\Big)
   \cosh(y)\cos\Big(\frac{\pi y}{2t}\Big)\,dy \,da \nonumber \\
 & =:p_t(a)\,da
\end{align}
on $(0,\infty)$ for every $t\in(0,\infty)$.
As for every $x\geq 0$,  $\frac{1}{2} e^{x}\leq \cosh(x)\leq e^{x}$,
$|\cos(x)|\leq 1$ and $g(x)\leq c\, x^p$, we obtain that
 \begin{align}
   \int_0^\infty \int_0^\infty
 \sup_{t\geq1}\Big|&\frac{e^{\frac{\pi^2}{8t}}g(a)}{\sqrt{\pi^2 t}\sqrt{a}}
 \exp\Big(-a\big(\cosh(y)\big)^2-\frac{y^2}{2t}\Big)
   \cosh(y)\cos\Big(\frac{\pi y}{2t}\Big)\Big|\,dy \,da \nonumber \\
&\leq \int_0^\infty \int_0^\infty c\, e^{\frac{\pi^2}{8}}a^{p-\frac{1}{2}}
 \exp\Big(-\frac{a e^{2y}}{4}\Big)
   \exp(y)\,dy \,da .\label{domcrit0}
 \end{align}
Using the substitution $y:=ax$, we see that
\begin{align} \label{eq:subsi}
 \int_0^{\infty} a^{p-\frac{1}{2}}e^{-ax}\,da
 =\int_0^{\infty} y^{p-\frac{1}{2}}x^{\frac{1}{2}-p}e^{-y}\frac{1}{x}\,dy
 = \frac{1}{x^{\frac{1}{2}+p}}\Gamma\left(p+\tfrac{1}{2}\right)
\end{align}
for every $x>0$.
Now applying Fubini's theorem and~\eqref{eq:subsi} yields that
\begin{align}
&\int_0^\infty \int_0^\infty a^{p-\frac{1}{2}}
 \exp\Big(-\frac{a e^{2y}}{4}\Big)\exp(y)\,dy \,da
 =\int_0^\infty \int_0^\infty  a^{p-\frac{1}{2}}
 \exp\Big(-a\frac{ e^{2y}}{4}\Big)\,da\exp(y) \,dy \nonumber \\
 &=\int_0^\infty \Gamma\left(p+\tfrac{1}{2}\right)\left(\frac{4}{e^{2y}}\right)^{p+\frac{1}{2}}e^y\,dy
 =\Gamma\left(p+\tfrac{1}{2}\right)2^{2p+1}\int_0^\infty e^{-2p y}\,dy\nonumber \\
& =\Gamma\left(p+\tfrac{1}{2}\right)2^{2p+1}\frac{1}{2p}
    <\infty.
    \label{domcrit}
 \end{align}
Thus the integrals in (\ref{domcrit0}) are finite.
Applying the dominated convergence theorem, we get that
\begin{align}
\lim_{t\rightarrow\infty}& \sqrt{t} \cdot \E\Big[g\big(\frac{1}{2A_{t}}\big)\Big]
= \lim_{t\rightarrow\infty} \sqrt{t}\int_0^\infty g(a) p_t(a)da \nonumber \\
&= \int_0^\infty g(a)\int_0^\infty\lim_{t\rightarrow\infty}
  \frac{\sqrt{2}e^{\frac{\pi^2}{8t}}}{\sqrt{\pi^2}}\frac{1}{\sqrt{a}}
 \exp\Big(-a\big(\cosh(y)\big)^2-\frac{y^2}{2t}\Big)
   \cosh(y)\cos\Big(\frac{\pi y}{2t}\Big)\,dy \,da \nonumber \\
&= \int_{0}^\infty g(a)
 \int_0^\infty\frac{\sqrt{2}}{\pi}\frac{1}{\sqrt{a}}\exp\Big(-a\big(\cosh(y)\big)^2\Big)
   \cosh(y)\,dy \,da .
\end{align}
To complete the lemma, we will simplify the inner integral in the above equation.
Noting that $(\cosh(y))^2=1+(\sinh(y))^2$, $y\in\mathbb{R}$,
and using the substitution $x:=\sqrt{2a}\sinh(y)$
 yields  that
\begin{align}
\begin{split}\nonumber
 \int_0^\infty\frac{\sqrt{2}}{\pi}\frac{1}{\sqrt{a}}\exp\Big(-a\big(\cosh(y)\big)^2\Big)
   \cosh(y)\,dy
 &=\int_0^\infty\frac{\sqrt{2}}{\pi}\frac{1}{\sqrt{a}}\exp\big(-a\big(1+(\sinh(y))^2\big)\big)
   \cosh(y)dy\\
 =\int_0^\infty\frac{\sqrt{2}}{\pi}\frac{1}{\sqrt{a}}\exp\Big(-a-\frac{x^2}{2}\Big)
   \frac{1}{\sqrt{2a}}\,dx
 &
   =\frac{1}{\sqrt{2\pi}}\frac{e^{-a}}{a}
\end{split}
\end{align}
for all $a\in(0,\infty)$.
\end{proof}

\begin{proof}[{Proof of the critical case of Theorem \ref{asymptotic_survival}}]
Fix $z\geq 0$. As $f(zx)\leq \frac{z\sigma_e^2}{\sigma_b^2}x$ for every $x\geq 0$, the assumptions
of Lemma~\ref{l:convergencecrit} are met with $g=f$.
Applying Corollary \ref{c:survival_prob} and Lemma \ref{l:convergencecrit}, we get that
 \begin{align}  
   \lim_{t\to\infty}\sqrt{t}\cdot\P^z\big(Z_t>0\big)
   &=\frac{2}{\sigma_e}\lim_{t\to\infty}\sqrt{\frac{t\sigma_e^2}{4}}\cdot
   \E\Big[f\Big(\frac{z}{2A_{t\sigma_e^2/4}}\Big)\Big]=\frac{2}{\sigma_e} \int_0^\infty f(za) \frac{1}{\sqrt{2\pi}}\frac{e^{-a}}{a} da.
\end{align}
Next we have that
\begin{equation}
 \begin{split}
 \int_0^\infty  (1-e^{-c x})\frac{e^{-x}}{x} \, dx
 =\log(1+c)
 \end{split}
\end{equation}
for every $c\in[0,\infty)$ which can be checked by differentiating both sides.
Using this result with $c=\tfrac{\sigma_e^2}{\sigma_b^2}z$
and recalling the definition (\ref{deff}) of $f$, we get that
\begin{align}
   \lim_{t\to\infty}\sqrt{t}\cdot\P^z\big(Z_t>0\big)
   = \frac{2}{\sigma_e} \int_0^\infty \left(1-\exp\left(-\tfrac{\sigma_e^2}{\sigma_b^2}za\right)\right)
                                  \frac{1}{\sqrt{2\pi}}\frac{e^{-a}}{a} da
  = \frac{\sqrt{2}}{\sqrt{\pi}\sigma_e} \log\Big(1+\frac{\sigma_e^2}{\sigma_b^2} \cdot z\Big),
\end{align}
which proves Theorem \ref{asymptotic_survival} in the case $\al=0$.
\end{proof}

\subsection{The weakly subcritical regime}
Next, we turn to the case $-\sigma_e^2<\alpha<0$ and prove (\ref{thweak}).
\begin{lemma}\label{l:convergence}
Let $\gamma>0$ and 
let $g:[0,\infty)\rightarrow [0,\infty)$ be a Borel measurable function.
Assume
for some $b>\gamma/2$ and $c>0$
that
$g(x)\leq c\, x^{b}$
for every $x\geq 0$.
Then we get that 
\begin{align}
\lim_{t\rightarrow\infty} t^{3/2}e^{\gamma^2 t/2} \E\Big[g\big(\frac{1}{2A_{t}^{(\gamma)}}\big)\Big] = \int_0^\infty g(a) \phi_{\gamma}(a) da <\infty.
\end{align}
\end{lemma}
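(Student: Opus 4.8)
The plan is to substitute the explicit density of $1/(2A_t^{(\gamma)})$ recorded in \eqref{densitity_general} and then pass the limit $t\to\infty$ inside the integral by dominated convergence. Write $p_{t,\gamma}$ for that density (which is available since $\gamma>0>-1$). Using $g\geq0$ and Tonelli to unfold the double integral in \eqref{densitity_general}, we have
\[
  t^{3/2}e^{\gamma^2 t/2}\,\E\!\left[g\!\left(\tfrac{1}{2A_t^{(\gamma)}}\right)\right]
  =\int_0^\infty\!\!\int_0^\infty\!\!\int_0^\infty \Phi_t(a,s,\xi)\,da\,ds\,d\xi ,
\]
where, after cancelling the factor $e^{-\gamma^2 t/2}/\sqrt{t}$ in \eqref{densitity_general} against $t^{3/2}e^{\gamma^2 t/2}$ and writing $t^{3/2}/\sqrt t=t$,
\[
  \Phi_t(a,s,\xi)=\frac{e^{\pi^2/(2t)}}{\sqrt2\,\pi^2}\,\Gamma\!\Big(\tfrac{\gamma+2}{2}\Big)\,
  g(a)\,e^{-a}a^{-(\gamma+1)/2}\,e^{-\xi^2/(2t)}s^{(\gamma-1)/2}e^{-as}\,
  \frac{\sinh\xi\cosh\xi\cdot t\sin(\pi\xi/t)}{\big(s+(\cosh\xi)^2\big)^{(\gamma+2)/2}} .
\]

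First I would establish pointwise convergence of $\Phi_t$. As $t\to\infty$ we have $e^{\pi^2/(2t)}\to1$, $e^{-\xi^2/(2t)}\to1$ and $t\sin(\pi\xi/t)\to\pi\xi$, so $\Phi_t(a,s,\xi)$ tends to
\[
  \frac{1}{\sqrt2\,\pi}\,\Gamma\!\Big(\tfrac{\gamma+2}{2}\Big)\,g(a)\,e^{-a}a^{-(\gamma+1)/2}\,s^{(\gamma-1)/2}e^{-as}\,\frac{\sinh\xi\cosh\xi\,\xi}{\big(s+(\cosh\xi)^2\big)^{(\gamma+2)/2}} .
\]
The substitution $u=as$ in the $s$-variable turns $a^{-(\gamma+1)/2}\int_0^\infty s^{(\gamma-1)/2}e^{-as}(s+(\cosh\xi)^2)^{-(\gamma+2)/2}ds$ into $a^{-\gamma/2}\int_0^\infty u^{(\gamma-1)/2}e^{-u}(u+a(\cosh\xi)^2)^{-(\gamma+2)/2}du$, so integrating the pointwise limit in $(s,\xi)$ produces exactly $g(a)\phi_\gamma(a)$ with $\phi_\gamma$ as in \eqref{defphibeta}.

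The core of the argument is the construction of an integrable dominating function. For $t\geq1$ I would bound $e^{\pi^2/(2t)}\leq e^{\pi^2/2}$, $e^{-\xi^2/(2t)}\leq1$, $|t\sin(\pi\xi/t)|\leq\pi\xi$ (from $|\sin x|\leq|x|$), and $g(a)\leq c\,a^b$, so that $\sup_{t\geq1}\Phi_t(a,s,\xi)$ is at most a constant multiple of
\[
  h(a,s,\xi):=a^{\,b-(\gamma+1)/2}\,e^{-a(1+s)}\,s^{(\gamma-1)/2}\,\frac{\sinh\xi\cosh\xi\,\xi}{\big(s+(\cosh\xi)^2\big)^{(\gamma+2)/2}} .
\]
Finiteness of $\int_{(0,\infty)^3}h$, which by Tonelli may be checked in any order, is where the hypotheses enter: integrating in $a$ yields $\Gamma\big(b-\tfrac{\gamma-1}{2}\big)(1+s)^{-(b-(\gamma-1)/2)}$ and needs $b>\tfrac{\gamma-1}{2}$; the substitution $r=(\sinh\xi)^2$ reduces the $\xi$-integral to $\tfrac12\int_0^\infty \arcosh(\sqrt{1+r})\,(s+1+r)^{-(\gamma+2)/2}\,dr$, which is finite for $\gamma>0$ (logarithmic growth of $\arcosh$ against algebraic decay) and is $O\big((1+s)^{-\gamma/2}\log(2+s)\big)$ as $s\to\infty$; finally the remaining $s$-integrand behaves like $s^{(\gamma-1)/2}$ near $0$ (integrable since $\gamma>0$) and like $s^{\gamma/2-1-b}\log s$ as $s\to\infty$, integrable precisely because $b>\tfrac{\gamma}{2}$. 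With $\int h<\infty$ in hand, dominated convergence gives $\lim_{t\to\infty}t^{3/2}e^{\gamma^2 t/2}\E[g(1/(2A_t^{(\gamma)}))]=\int_0^\infty g(a)\phi_\gamma(a)\,da$, and this limit is finite since the integral is bounded by $\int h$. The hard part will be exactly this dominating-bound estimate — in particular the $\xi$-integral bound producing the logarithmic factor in $s$ and the resulting sharp use of $b>\gamma/2$ at the upper end of the $s$-integral; pointwise convergence and the change of variables identifying $\phi_\gamma$ are routine.
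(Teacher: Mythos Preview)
Your proposal is correct and follows the same overall strategy as the paper: insert the explicit density \eqref{densitity_general}, bound $|t\sin(\pi\xi/t)|\leq\pi\xi$ and $g(a)\leq c\,a^b$, and pass to the limit by dominated convergence, identifying the limit with $\phi_\gamma$ via the substitution $u=as$.

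The one genuine difference is in how you verify integrability of the dominating function. The paper first substitutes $u=as$ and then, after the crude bounds $\sinh\xi\leq\cosh\xi\leq e^\xi$ and $\cosh\xi\geq e^\xi/2$, splits the $a$-integral into $a\in[1,\infty)$ (easy) and $a\in(0,1)$ (handled by introducing an auxiliary $\epsilon\in(0,\min(b-\gamma/2,\gamma/2))$ and integrating $a$ explicitly against $(4ue^{-2\xi}+a)^{-(\gamma+2)/2+\epsilon}$). You instead keep the original variable $s$ and integrate $a$ out first against $e^{-a(1+s)}$, obtaining a clean Gamma factor; the $\xi$-integral is then done via $r=(\sinh\xi)^2$, yielding the $O((1+s)^{-\gamma/2}\log(2+s))$ bound, and the final $s$-integral uses $b>\gamma/2$ exactly at the upper end. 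Your route avoids the case split and the $\epsilon$-trick, and makes the role of the hypothesis $b>\gamma/2$ more transparent. A minor caveat: your appeal to Tonelli at the start is not quite right since $\sin(\pi\xi/t)$ changes sign; but once your dominating bound $\int h<\infty$ is in place, Fubini justifies the unfolding a posteriori, so this is cosmetic.
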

\begin{proof}
Let $\gamma>0$ be fixed and let $g$ be a function fulfilling the
conditions of the lemma with constants $b>\gamma/2$ and $c>0$.
Recall the density of $\tfrac{1}{2 A_t^{(\gamma)}}$, $t>0$,
from (\ref{densitity_general}).
With the substitution $u=as$, we get for $t\in(0,\infty)$ that
\begin{align}
t^{3/2}& e^{\gamma^2 t/2} \E\Big[g\big(\frac{1}{2A_{t}^{(\gamma)}}\big)\Big] \nonumber \\
& =\frac{te^{\pi^2/2t}}{\sqrt{2}\pi^2} \Gamma\Big(\frac{\gamma+2}{2}\Big)\int_0^\infty\int_0^\infty\int_0^\infty g(a) e^{-a} a^{-(\gamma+1)/2} e^{-\xi^2/2t} \Big(\frac{u}{a}\Big)^{(\gamma-1)/2}e^{-u}\nonumber \\
&\qquad \qquad \qquad \qquad \qquad \qquad \qquad \qquad  \cdot \frac{\sinh(\xi)\cosh(\xi)\sin(\pi \xi/t)}{(u/a+(\cosh(\xi))^2)^{(\gamma+2)/2}}d\xi \,\frac{du}{a}\,da\nonumber
\\
&= \frac{te^{\pi^2/2t}}{\sqrt{2}\pi^2} \Gamma\Big(\frac{\gamma+2}{2}\Big)\int_0^\infty\int_0^\infty\int_0^\infty g(a) e^{-a} a^{-\gamma/2} e^{-\xi^2/2t}  u^{(\gamma-1)/2}e^{-u}\nonumber \\
&\qquad\qquad \qquad \qquad \qquad\qquad \qquad \qquad   \cdot \frac{\sinh(\xi)\cosh(\xi)\sin(\pi \xi/t)}{(u+a(\cosh(\xi))^2)^{(\gamma+2)/2}}d\xi\, du\,da.\label{1108}
\end{align}
In order to apply the dominated convergence theorem, we will show that the
integrand on the right-hand side is dominated for $t\in[1,\infty)$ by an
integrable function.
As for $x\geq 0$, $|\sin(x)|\leq x$, $\sinh(x)\leq \cosh(x)\leq e^{x}$, $\cosh(x)\geq e^{x}/2$ and by assumption, $g(x)\leq c\, x^b$,
 there is a constant $d=d_{\gamma}>0$ such that
 \begin{align}
&\int_0^\infty\int_0^\infty\int_0^\infty \sup_{t\geq 1}\bigg[\frac{te^{\pi^2/2t}}{\sqrt{2}\pi^2} \Gamma\Big(\frac{\gamma+2}{2}\Big)g(a) e^{-a} a^{-\gamma/2} e^{-\xi^2/2t}  u^{(\gamma-1)/2}e^{-u}\nonumber \\
&\qquad\qquad \qquad \qquad \qquad \qquad  \cdot \frac{\sinh(\xi)\cosh(\xi)\sin(\pi \xi/t)}{(u+a(\cosh(\xi))^2)^{(\gamma+2)/2}}\bigg]d\xi\, du\,da \nonumber
\\
&\leq d   \int_0^\infty\int_0^\infty\int_0^\infty e^{-a} a^{b-\gamma/2} u^{(\gamma-1)/2}e^{-u} \frac{e^{2\xi} \xi}{(4u+ae^{2\xi})^{(\gamma+2)/2}}d\xi\, du\,da\nonumber \\
&\leq d\, \bigg[\int_1^\infty\int_0^\infty\int_0^\infty e^{-a} a^{b-\gamma/2}  u^{(\gamma-1)/2}e^{-u}
\frac{e^{-\xi\gamma} \xi}{a^{(\gamma+2)/2}} d\xi\, du\,da \nonumber\\ 
&\qquad + \int_0^1\int_0^\infty\int_0^\infty a^{b-\gamma/2} u^{(\gamma-1)/2}e^{-u}
\frac{e^{-\xi\gamma} \xi}{(4ue^{-2\xi}+a)^{(\gamma+2)/2}} d\xi\, du\,da \bigg]. \label{0608}
 \end{align}
The first summand on
the right-hand side is finite as $\tfrac{\gamma-1}{2}>-1$ and $\gamma>0$.
Recall $b-\gamma/2>0$.
Choose $0<\epsilon<\min(b-\gamma/2,\gamma/2)$.
Using $a^x\leq a^y$ for every $a\in[0,1]$ and $0\leq y\leq x$,  and applying Fubini's theorem,
we estimate
the second summand on the right-hand side of (\ref{0608}) as follows:
\begin{align}
&\int_0^1\int_0^\infty\int_0^\infty a^{b-\gamma/2} u^{(\gamma-1)/2}e^{-u}
\frac{e^{-\xi\gamma} \xi}{(4ue^{-2\xi}+a)^{(\gamma+2)/2}} d\xi\, du\,da \bigg]\nonumber \\
&\qquad \leq   \int_0^1\int_0^\infty\int_0^\infty a^{\epsilon} u^{(\gamma-1)/2}e^{-u}
\frac{e^{-\xi\gamma} \xi}{(4ue^{-2\xi}+a)^{(\gamma+2)/2}} d\xi\, du\,da \nonumber\\
&\qquad \leq   \int_0^\infty\int_0^\infty\int_0^1 (4u e^{-2\xi}+a)^{\epsilon}  u^{(\gamma-1)/2}e^{-u}
\frac{e^{-\xi\gamma} \xi}{(4ue^{-2\xi}+a)^{(\gamma+2)/2}} da\,du\,d\xi \nonumber \\
&\qquad =   \int_0^\infty\int_0^\infty u^{(\gamma-1)/2}e^{-u}e^{-\xi\gamma} \xi \int_0^1  
\frac{1}{(4ue^{-2\xi}+a)^{(\gamma+2)/2-\epsilon}} da\,du\,d\xi \nonumber \\
&\qquad = \frac{1}{\gamma/2-\epsilon}  \int_0^\infty\int_0^\infty   u^{(\gamma-1)/2}e^{-u} e^{-\xi\gamma} \xi
\Big(\frac{1}{(4ue^{-2\xi})^{\gamma/2-\epsilon}}-\frac{1}{(4ue^{-2\xi}+1)^{\gamma/2-\epsilon}}\Big)du\, d\xi \nonumber \\ 
&\qquad \leq \frac{4^{-\gamma/2+\epsilon}}{\gamma/2-\epsilon}    \int_0^\infty \int_0^{\infty} u^{-1/2+\epsilon}e^{-u}  e^{-\epsilon 2\xi} \xi\, du \;d\xi<\infty.
\end{align}
Thus, applying dominated convergence in (\ref{1108}) and
using $t\sin(\pi \xi/t)\stackrel{t\rightarrow\infty}{\longrightarrow}\pi \xi$
for every $\xi\in[0,\infty)$, we get that
\begin{align}
&\lim_{t\rightarrow\infty} t^{3/2} e^{\gamma^2 t/2} \E\Big[g\big(\frac{1}{2A_{t}^{(\gamma)}}\big)\Big] \nonumber \\
&=  \Gamma\Big(\frac{\gamma+2}{2}\Big)\int_0^\infty\int_0^\infty\int_0^\infty \lim_{t\rightarrow\infty}\frac{te^{\pi^2/2t}}{\sqrt{2}\pi^2} g(a) e^{-a} a^{-\gamma/2} e^{-\xi^2/2t}  u^{(\gamma-1)/2}e^{-u}\nonumber \\
&\qquad\qquad\qquad\qquad \qquad\qquad\cdot \frac{\sinh(\xi)\cosh(\xi)\sin(\pi \xi/t)}{(u+a(\cosh(\xi))^2)^{(\gamma+2)/2}}d\xi\, du\,da\nonumber \\
&=\frac{1}{\sqrt{2}\pi} \Gamma\Big(\frac{\gamma+2}{2}\Big)\int_0^\infty\int_0^\infty\int_0^\infty g(a) e^{-a} a^{-\gamma/2}    u^{(\gamma-1)/2}e^{-u}\frac{\sinh(\xi)\cosh(\xi)\xi}{(u+a(\cosh(\xi))^2)^{(\gamma+2)/2}}d\xi\, du\,da\nonumber \\
&= \int_0^{\infty} g(a) \phi_{\gamma}(a)\,da,
\end{align}
which is the claim of the lemma.
\end{proof}
\begin{proof}[{Proof of Theorem \ref{asymptotic_survival} in the weakly subcritical case}]
Assume $-\sigma_e^2<\alpha<0$. Let $z\geq0$ be fixed and $\beta= -2\alpha/\sigma_e^2$. By Corollary \ref{c:survival_prob}, we obtain that
\begin{align}
\lim_{t\rightarrow\infty} t^{3/2} e^{\frac{\alpha^2}{2\sigma_e^2} t} \cdot\P^z\big(Z_t>0\big)
=  \frac{8}{\sigma_e^3} \lim_{t\rightarrow\infty} \Big(\frac{\sigma_e^2 t}{4}\Big)^{3/2}
   e^{\rob{\frac{2\alpha}{\sigma_e^2}}^2\cdot\frac{\sigma_e^2 t}{4}\cdot\frac{1}{2}} \E\bigg[f\Big(\frac{z}{2A_{t\sigma_e^2/4}^{(\beta)}}\Big)\bigg]. \label{weakcrit1}
\end{align}
As for every $x\geq 0$, $f(zx) \leq \frac{z\sigma_e^2}{\sigma_b^2} x$ and $0<\beta<2$,
the assumptions of Lemma \ref{l:convergence}
with $g=f$, $\gamma=\beta$ and $b=1>\tfrac{\beta}{2}$ are fulfilled.
Applying Lemma \ref{l:convergence} to (\ref{weakcrit1}) proves Theorem \ref{asymptotic_survival} in the weakly subcritical case.
\end{proof}
The following lemma for the weakly subcritical case will be needed later.
Recall $\beta=-2\al/\sigma_e^2$ and $\vartheta$ from~\eqref{eq:vartheta}.
\begin{lemma} \label{l:vartheta}
  Assume $\sigma_b,\sigma_e\in(0,\infty)$ and $\al\in(-\sigma_e^2,0)$.
  Then $\vartheta\in\C^\infty\left([0,\infty),\R\right)$ and
  \begin{align}
    \lim_{z\to\infty}\frac{\vartheta(z)}{z^{\frac{\beta}{2}}\log(z)}
 &=\frac{2}{\beta}\frac{\sqrt{2\pi}}{\sigma_e^3 \sin\big(\pi\frac{\beta}{2}\big)}
   \left(\frac{\sigma_e^2}{\sigma_b^2}\right)^{\frac{\beta}{2}}
    \label{eq:vartheta.vartheta}
    \\
    \lim_{z\to\infty}\frac{z\vartheta^{'}(z)}{z^{\frac{\beta}{2}}\log(z)}
 &=\frac{\sqrt{2\pi}}{\sigma_e^3 \sin\big(\pi\frac{\beta}{2}\big)}
   \left(\frac{\sigma_e^2}{\sigma_b^2}\right)^{\frac{\beta}{2}}
    \label{eq:vartheta.prime}.
  \end{align}
\end{lemma}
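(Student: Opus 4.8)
The plan is to read off the asymptotics of $\vartheta$ and of $z\vartheta'$ from the behaviour of the density $\phi_\beta$ near the origin, after the change of variable $b=za$. Recall from~\eqref{defphibeta} that
\[
  \phi_\beta(a)=\frac{\Gamma\!\big(\tfrac{\beta+2}{2}\big)}{\sqrt 2\,\pi}\,e^{-a}a^{-\beta/2}\,I(a),
  \qquad
  I(a):=\int_0^\infty\!\!\int_0^\infty u^{(\beta-1)/2}e^{-u}\,\frac{\sinh(\xi)\cosh(\xi)\,\xi}{\big(u+a\cosh^2(\xi)\big)^{(\beta+2)/2}}\,d\xi\,du .
\]
First I would establish the sharp estimate $I(a)\sim\frac{\sqrt\pi}{2\beta}\cdot\frac{\log(1/a)}{a}$ as $a\downarrow0$. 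The key step is the substitution $w=\tfrac a4 e^{2\xi}$ (so $d\xi=\tfrac{dw}{2w}$, $\xi=\tfrac12\log(4w/a)$, $\sinh\xi\cosh\xi=\tfrac wa\big(1-(\tfrac a{4w})^2\big)$, $a\cosh^2\xi=w+\tfrac a2+\tfrac{a^2}{16w}$), which turns $I(a)$ into
\[
  I(a)=\frac1{4a}\int_{a/4}^\infty\Big(1-\big(\tfrac a{4w}\big)^2\Big)\log\!\big(\tfrac{4w}{a}\big)\,J\!\big(w+\tfrac a2+\tfrac{a^2}{16w}\big)\,dw,
  \qquad
  J(\epsilon):=\int_0^\infty\frac{u^{(\beta-1)/2}e^{-u}}{(u+\epsilon)^{(\beta+2)/2}}\,du .
\]
Splitting $\log(4w/a)=\log(1/a)+\log(4w)$ and applying dominated convergence — with a dominating function built from the elementary bounds $J(\epsilon)\le c_1\,\epsilon^{-1/2}$ and $J(\epsilon)\le c_2\,\epsilon^{-(\beta+2)/2}$ (obtained by $u=\epsilon v$ and by pulling $(u+\epsilon)^{-(\beta+2)/2}\le\epsilon^{-(\beta+2)/2}$ out, respectively) — yields $I(a)\sim\frac{\log(1/a)}{4a}\int_0^\infty J(w)\,dw$, and $\int_0^\infty J(w)\,dw=\int_0^\infty u^{(\beta-1)/2}e^{-u}\big(\tfrac2\beta u^{-\beta/2}\big)\,du=\tfrac2\beta\Gamma(\tfrac12)=\tfrac{2\sqrt\pi}\beta$. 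Hence, using $\Gamma(\tfrac{\beta+2}{2})=\tfrac\beta2\Gamma(\tfrac\beta2)$,
\[
  \phi_\beta(a)\sim\frac{\Gamma(\beta/2)}{4\sqrt{2\pi}}\;a^{-1-\beta/2}\log(1/a)\qquad(a\downarrow0).
\]
The same two bounds for $J$ also give $\phi_\beta(a)\le C\,a^{-1-\beta/2}\big(1+\log(1/a)\big)$ on $(0,1]$, $\phi_\beta(a)\le C\,e^{-a/2}$ on $[1,\infty)$, and continuity of $\phi_\beta$ on $(0,\infty)$; combined with $|\partial_z^k f(za)|\le(\sigma_e^2/\sigma_b^2)^k a^k$ this legitimises differentiation under the integral sign in $\vartheta(z)=\tfrac{8}{\sigma_e^3}\int_0^\infty f(za)\phi_\beta(a)\,da$, giving $\vartheta\in\C^\infty\big([0,\infty),\R\big)$.

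For~\eqref{eq:vartheta.vartheta} I would substitute $b=za$, so that $\dfrac{\vartheta(z)}{z^{\beta/2}\log z}=\dfrac{8}{\sigma_e^3}\displaystyle\int_0^\infty f(b)\,\frac{\phi_\beta(b/z)}{z^{1+\beta/2}\log z}\,db$. For every fixed $b>0$ the asymptotics above give $\frac{\phi_\beta(b/z)}{z^{1+\beta/2}\log z}\to\frac{\Gamma(\beta/2)}{4\sqrt{2\pi}}\,b^{-1-\beta/2}$, since $\phi_\beta(b/z)\sim\frac{\Gamma(\beta/2)}{4\sqrt{2\pi}}z^{1+\beta/2}b^{-1-\beta/2}(\log z-\log b)$ and $\frac{\log z-\log b}{\log z}\to1$; the bounds on $\phi_\beta$ (together with $f(b)\le(\sigma_e^2/\sigma_b^2)\,b$ near $0$ to absorb the extra power) produce an integrable dominant $f(b)H(b)$. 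Dominated convergence then gives $\lim_{z\to\infty}\frac{\vartheta(z)}{z^{\beta/2}\log z}=\frac{8}{\sigma_e^3}\cdot\frac{\Gamma(\beta/2)}{4\sqrt{2\pi}}\int_0^\infty f(b)b^{-1-\beta/2}\,db$. An integration by parts (the boundary terms vanishing because $\beta/2<1$) shows $\int_0^\infty f(b)b^{-1-\beta/2}\,db=\frac2\beta\big(\tfrac{\sigma_e^2}{\sigma_b^2}\big)^{\beta/2}\Gamma\!\big(1-\tfrac\beta2\big)$, and inserting this together with the reflection formula $\Gamma(\tfrac\beta2)\Gamma(1-\tfrac\beta2)=\pi/\sin(\pi\beta/2)$ and the identity $4\pi/\sqrt{2\pi}=2\sqrt{2\pi}$ produces exactly the right-hand side of~\eqref{eq:vartheta.vartheta}.

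For~\eqref{eq:vartheta.prime} the argument is the same after differentiating: $z\vartheta'(z)=\frac{8}{\sigma_e^3}\int_0^\infty (za)f'(za)\,\phi_\beta(a)\,da=\frac{8}{\sigma_e^3}\int_0^\infty g(b)\,\phi_\beta(b/z)\,\tfrac{db}z$ with $g(b):=bf'(b)=\tfrac{\sigma_e^2}{\sigma_b^2}\,b\,e^{-\sigma_e^2 b/\sigma_b^2}$, so the identical dominated-convergence passage yields $\lim_{z\to\infty}\frac{z\vartheta'(z)}{z^{\beta/2}\log z}=\frac{8}{\sigma_e^3}\cdot\frac{\Gamma(\beta/2)}{4\sqrt{2\pi}}\int_0^\infty g(b)b^{-1-\beta/2}\,db$; here $\int_0^\infty g(b)b^{-1-\beta/2}\,db=\int_0^\infty\tfrac{\sigma_e^2}{\sigma_b^2}e^{-\sigma_e^2 b/\sigma_b^2}b^{-\beta/2}\,db=\big(\tfrac{\sigma_e^2}{\sigma_b^2}\big)^{\beta/2}\Gamma\!\big(1-\tfrac\beta2\big)$, and the reflection formula with $2\pi/\sqrt{2\pi}=\sqrt{2\pi}$ gives~\eqref{eq:vartheta.prime}. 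The hard part will be the first step — the sharp $a\downarrow0$ asymptotics of $\phi_\beta$, i.e.\ justifying the limit in the $w$-substitution representation of $I(a)$ and pinning down the constant $\tfrac{\sqrt\pi}{2\beta}$ (one must check that the small-$\xi$, i.e.\ small-$w$, region where the substitution approximations degrade contributes only to lower order); everything afterwards is a routine dominated-convergence passage plus Gamma-function bookkeeping.
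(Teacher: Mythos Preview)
Your proposal is correct and the algebra checks out (the constants really do collapse to the stated values via $\Gamma(\tfrac{\beta+2}{2})=\tfrac\beta2\Gamma(\tfrac\beta2)$ and the reflection formula). The route, however, is organised differently from the paper's, and it is worth noting what each buys.

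You first isolate the small-$a$ asymptotics $\phi_\beta(a)\sim\tfrac{\Gamma(\beta/2)}{4\sqrt{2\pi}}\,a^{-1-\beta/2}\log(1/a)$ as a standalone lemma via the substitution $w=\tfrac{a}{4}e^{2\xi}$, and then feed this into the $b=za$ change of variables; the asymptotics of $\vartheta$ and $z\vartheta'$ then follow by one easy dominated convergence each. The paper instead never states the asymptotics of $\phi_\beta$ explicitly: after the same $b=za$ substitution it writes out $\phi_\beta(b/z)$ and performs the closely related substitution $x=\cosh(\xi)\sqrt{b/(zu)}$, which produces an $\arcosh(x\sqrt{zu/b})$; the limit $\arcosh(\cdot)/\log z\to\tfrac12$ then plays the role of your $\log(1/a)$ extraction, and the resulting triple integral factors as a product of three one-dimensional integrals (over $b$, $u$, and $x$). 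Your $w$-integral packages the paper's $u$- and $x$-integrals together, which is why your $\int_0^\infty J(w)\,dw=\tfrac{2\sqrt\pi}{\beta}$ matches the paper's $\Gamma(\tfrac12)\cdot\tfrac{1}{\beta}$.

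Your organisation is arguably cleaner: the asymptotics of $\phi_\beta$ is a reusable statement, and the final dominated-convergence step becomes routine once you have the global bound $\phi_\beta(a)\le C\,a^{-1-\beta/2}(1+|\log a|)$ (valid on all of $(0,\infty)$, since the exponential decay on $[1,\infty)$ beats any power). The paper's version trades this modularity for doing everything in one pass. Your stated concern about the small-$\xi$ region in the $w$-substitution is not a real obstacle: the substitution is exact, and the monotonicity of $J$ together with the bounds $J(\epsilon)\le c_1\epsilon^{-1/2}$ and $J(\epsilon)\le c_2\epsilon^{-(\beta+2)/2}$ give an integrable dominant $|\log(4w)|\bigl(c_1 w^{-1/2}\1_{w\le1}+c_2 w^{-(\beta+2)/2}\1_{w>1}\bigr)$ for both the $\log(1/a)$ and the $\log(4w)$ pieces of the split, so the dominated convergence is straightforward.
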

\begin{proof}
  In order to prove $\vartheta\in\C^\infty\left([0,\infty),\R\right)$, note that
\begin{align}
  \int_0^\infty \Big|\frac{\del^n f(za)}{\del^n z}\Big|\phi_\beta(a)\,da
  &=\int_0^\infty \frac{\sigma_e^{2n}}{\sigma_b^{2n}}a^{n}\exp\left(-\frac{\sigma_e^2}{\sigma_b^2}za\right)\phi_\beta(a)\,da
   \leq \frac{\sigma_e^{2n}}{\sigma_b^{2n}}\int_0^\infty a^{n}\, \phi_\beta(a)\,da
   \label{eq:deriv.weakly}
\end{align}
for all $z\in[0,\infty)$ and all $n\in\N$.
The right-hand side of~\eqref{eq:deriv.weakly}
is finite according to Lemma~\ref{l:convergence} (note $1>\beta/2$).
The dominated convergence theorem thus implies that
$\vartheta\in\C^\infty\left([0,\infty),\R\right)$.
Moreover, we get that
\begin{equation}   \label{eq:derivatives.vartheta}
  \vartheta^{(n)}(z)
  =
  \frac{8}{\sigma_e^3}
  \int_0^\infty
  f^{(n)}(za)a^n\phi_\beta(a)\,da
\end{equation}
for all $z\in[0,\infty)$, $n\in\N_0$.
For proving~\eqref{eq:vartheta.vartheta},
we substitute
$b:=za$ and get that
  \begin{equation}  \begin{split}  \label{eq:vartheta.1}
    \frac{\vartheta(z)}{z^{\frac{\beta}{2}}\log(z)}
    &=
    \frac{1}{z^{\frac{\beta}{2}}\log(z)}
    \frac{8}{\sigma_e^3}\int_0^\infty f(za)\phi_\beta(a)\,da
   =\frac{8}{\sigma_e^3}\int_0^\infty f(b)
    \frac{1}{z^{\frac{\beta}{2}+1}\log(z)}
    \phi_\beta\left(\frac{b}{z}\right)\,db
  \end{split}     \end{equation}
  for all $z\in(0,\infty)$.
  Next we rewrite
  the definition~\eqref{defphibeta} of $\phi_\beta$ and see that
  \begin{equation}  \begin{split}  \label{eq:vartheta.2}
  \lefteqn{
    \frac{1}{z^{\frac{\beta}{2}+1}\log(z)}
    \phi_\beta\left(\frac{b}{z}\right)
  }\\
    &=\frac{1}{\sqrt{2}\pi}\Gamma\Big(\frac{\beta+2}{2}\Big)
     e^{-\frac{b}{z}}b^{-\beta/2}\int_0^\infty u^{(\beta-1)/2}e^{-u}
     \int_0^\infty
     \frac{\sinh(\xi)\cosh(\xi) \xi}{z\log(z)\left(u+\frac{b}{z}(\cosh(\xi)\right)^2)^{\frac{\beta+2}{2}}}d\xi\,du
  \end{split}     \end{equation}
  for all $b\in(0,\infty)$ and all $z\in(0,\infty)$.
  Using the substitution $x:=\cosh(\xi)\sqrt{b/(zu)}$ and noting that
  $dx/d\xi=\sinh(\xi)\sqrt{b/(zu)}$, we obtain that
  \begin{align}
     \int_0^\infty
     \frac{\sinh(\xi)\cosh(\xi) \xi}{z\log(z)\left(u+\frac{b}{z}(\cosh(\xi)\right)^2)^{\frac{\beta+2}{2}}}d\xi
     &=
     \frac{u}{bu^{\frac{\beta+2}{2}}}
     \int_0^\infty
     \frac{\cosh(\xi)\sqrt{\tfrac{b}{zu}}\cdot \xi \cdot \sinh(\xi)\sqrt{\tfrac{b}{zu}}}
          {\log(z)\left(1+\frac{b}{zu}(\cosh(\xi)\right)^2)^{\frac{\beta+2}{2}}}d\xi
     \nonumber
     \\
     &=
     \frac{1}{bu^{\frac{\beta}{2}}}
     \int_{\sqrt{\frac{b}{zu}}}^\infty
     \frac{x \arcosh\left(x\sqrt{\frac{zu}{b}}\right)}{\log(z)\left(1+x^2\right)^{\frac{\beta+2}{2}}}dx 
     \label{eq:vartheta.3}
  \end{align}
  for all $b\in(0,\infty)$, $u\in(0,\infty)$ and all $z\in(0,\infty)$.
  Note that $\arcosh(y)=\log(y+\sqrt{y^2-1})$ for all $y\in[1,\infty)$ and therefore $\arcosh(y)\leq\log(2y)$
  for all $y\in[1,\infty)$.
  Consequently, we have that
  $\arcosh(x\sqrt{\tfrac{zu}{b}})/\log(z)\leq(\log(x\sqrt{\tfrac{u}{b}})\vee 1)+\log(2)+\tfrac{\log(\sqrt{z})}{\log(z)}
  \leq 3(\log(x\sqrt{\tfrac{u}{b}})\vee 1)$
  for
  all $x\in(\sqrt{b/(zu)},\infty)$
  and
  all $z\in[3,\infty)$.
  Let us prove that we may apply the dominated convergence theorem. From~\eqref{eq:vartheta.1}, \eqref{eq:vartheta.2} and \eqref{eq:vartheta.3} 
we see that with
$c:=\tfrac{8}{\sigma_e^3}\tfrac{1}{\sqrt{2}\pi}\Gamma\big(\tfrac{\beta+2}{2}\big)\in(0,\infty)$,
 \begin{align}   
    \frac{\vartheta(z)}{z^{\frac{\beta}{2}}\log(z)}
     &= c \int_{0}^\infty f(b) e^{-\frac{b}{z}} b^{-\frac{\beta}{2}-1} \int_0^\infty u^{-\frac{1}{2}}e^{-u}
     \int_{\sqrt{\frac{b}{zu}}}^\infty
     \frac{x \arcosh\left(x\sqrt{\frac{zu}{b}}\right)}{\log(z)\left(1+x^2\right)^{\frac{\beta+2}{2}}}dx 
    \,du
    \,db
    \nonumber
    \\
    &\leq c\int_{0}^\infty f(b) b^{-\frac{\beta}{2}-1} \int_0^\infty u^{-\frac12}e^{-u}
     \int_0^\infty \frac{x}{\left(1+x^2\right)^{\frac{\beta+2}{2}}}3\left(\log\left(x\sqrt{\tfrac{u}{b}}\right)\vee 1\right)dx
    \,du
    \,db
    \nonumber
\end{align}
for all $z\in[3,\infty)$.
The right-hand side is finite as $\beta/2\in(0,1)$
and as $f(b)\leq (b\sigma_e^2/\sigma_b^2)\wedge 1$ for $b\in[0,\infty)$.
Thus we may apply the dominated convergence theorem and get that
\begin{equation}  \begin{split}  \label{eq:vartheta.5}
 \lefteqn{
    \lim_{z\to\infty}\frac{\vartheta(z)}{z^{\frac{\beta}{2}}\log(z)}
 }
 \\
     &= c \int_{0}^\infty f(b) b^{-\frac{\beta}{2}-1} \int_0^\infty u^{-\frac{1}{2}}e^{-u}
     \int_{0}^\infty
   \lim_{z\to\infty} e^{-\frac{b}{z}}\1_{\sqrt{\frac{b}{zu}}< x}
     \frac{ \arcosh\left(x\sqrt{\frac{zu}{b}}\right)}{\log(z)}
     \frac{x}{\left(1+x^2\right)^{\frac{\beta+2}{2}}}
    \,dx 
    \,du
    \,db
 \\
     &= c \cdot\int_{0}^\infty f(b) b^{-\frac{\beta}{2}-1}
    \,db
    \cdot
     \int_0^\infty u^{-\frac{1}{2}}e^{-u} \,du
     \cdot
     \int_{0}^\infty
    \frac{1}{2}
     \frac{x}{\left(1+x^2\right)^{\frac{\beta+2}{2}}}
    \,dx 
  \end{split}     \end{equation}
  For the last step we used that $\arcosh(z)/\log(z)\to 1$ as $z\to\infty$.
We will simplify the integrals in~\eqref{eq:vartheta.5}.
Using integration by parts, we obtain that
\begin{align}
\begin{split}\label{eq:rewrite.integral}
 \int_{0}^\infty& f(b) b^{-\beta/2-1} \,db
 = \int_{0}^\infty b^{-\beta/2-1} \Big(1-\exp\Big(-\frac{\sigma_e^2}{\sigma_b^2} b\Big)\Big) \,db 
=\frac{2\sigma_e^2}{\beta\sigma_b^2}
  \cdot
   \left(\frac{\sigma_b^2}{\sigma_e^2}\right)^{1-\frac{\beta}{2}}
   \Gamma\big(1-\tfrac{\beta}{2}\big).
\end{split}
\end{align}
Inserting~\eqref{eq:rewrite.integral}
into~\eqref{eq:vartheta.5}
and using $\Gamma(\tfrac12)=\sqrt{\pi}$, $\Gamma(1+\tfrac{\beta}{2})=\tfrac{\beta}{2}\Gamma(\tfrac{\beta}{2})$
and Euler's reflection principle $\Gamma(x)\Gamma(1-x)=\pi/\sin(\pi x)$ for $x\in(0,1)$,
we get that
\begin{equation*}  \begin{split}
 \lim_{z\to\infty}\frac{\vartheta(z)}{z^{\frac{\beta}{2}}\log(z)}
 =\frac{8}{\sigma_e^3} \frac{1}{\sqrt{2}\pi }\frac{\beta}{2}\Gamma\Big(\frac{\beta}{2}\Big)
   \cdot\frac{2\sigma_e^2}{\beta\sigma_b^2}
   \left(\frac{\sigma_b^2}{\sigma_e^2}\right)^{1-\frac{\beta}{2}}
   \Gamma\Big(1-\frac{\beta}{2}\Big)
   \cdot
   \Gamma\Big(\frac{1}{2}\Big)
   \cdot
   \frac{1}{2\beta}
 =\frac{2\sqrt{2\pi}}{\sigma_e^3\beta \sin\big(\pi\frac{\beta}{2}\big)}
   \left(\frac{\sigma_e^2}{\sigma_b^2}\right)^{\frac{\beta}{2}}
\end{split}     \end{equation*}
This proves~\eqref{eq:vartheta.vartheta}.
  Paralleling
  the above arguments
  yields~\eqref{eq:vartheta.prime}.
\end{proof}

The following lemma implies that
the additional drift term in~\eqref{eq:SDE.conditioned}
is strictly decreasing in the weakly subcritical regime.
\begin{lemma}  \label{l:decreasing}
  Assume $\sigma_b,\sigma_e\in(0,\infty)$ and $\al\in(-\sigma_e^2,\infty)$.
  Then the function
  $(0,\infty)\ni z\mapsto \sigma_e^2 z\vartheta^{'}(z)/\vartheta(z)\in\R$
  is strictly monotonic decreasing
  and
  satisfies $\lim_{z\to 0}\sigma_e^2 z\vartheta^{'}(z)/\vartheta(z)=\sigma_e^2$
  and $\lim_{z\to\infty}\sigma_e^2 z\vartheta^{'}(z)/\vartheta(z)=\max(-\al,0)$.
\end{lemma}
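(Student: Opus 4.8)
Write $g(z):=\sigma_e^2\,z\,\vartheta'(z)/\vartheta(z)$ for $z\in(0,\infty)$, so that $g(z)=\sigma_e^2\,\tfrac{d}{dw}\log\vartheta(e^w)\big|_{w=\log z}$; hence the asserted strict monotonicity of $g$ is equivalent to strict concavity of $w\mapsto\log\vartheta(e^w)$ on $\R$. First I would record that $\vartheta$ is real-analytic on $(0,\infty)$: in the case $\al\in(-\sigma_e^2,0)$, by \eqref{deff} and dominated convergence $\vartheta(z)=\tfrac{8}{\sigma_e^3}\int_0^\infty\big(1-e^{-\sigma_e^2za/\sigma_b^2}\big)\phi_\beta(a)\,da$ extends to a holomorphic function of $z$ on $\{\Re z>0\}$ (use $|1-e^{-w}|\le|w|$ for $\Re w\ge0$ together with $\int_0^1 a\phi_\beta(a)\,da<\infty$ from Lemma~\ref{l:convergence} and $\int_1^\infty\phi_\beta(a)\,da<\infty$), and it is manifestly analytic when $\al\ge0$ by \eqref{eq:vartheta}. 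So $g$ is analytic on $(0,\infty)$, and it then suffices to prove (i) the boundary limits $\lim_{z\downarrow0}g=\sigma_e^2$ and $\lim_{z\uparrow\infty}g=\max(-\al,0)$ — which in particular differ, since $\al>-\sigma_e^2$ — and (ii) that $g$ is \emph{non-increasing}; a non-constant analytic non-increasing function is automatically strictly decreasing. I would treat $\al\ge0$ and $\al\in(-\sigma_e^2,0)$ separately.

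For $\al\ge0$ everything is explicit. Put $r:=1+\tfrac{\sigma_e^2}{\sigma_b^2}z\in(1,\infty)$ and $\nu:=\tfrac{2\al}{\sigma_e^2}\ge0$. From \eqref{eq:vartheta} one computes
\[
  g(z)=\sigma_e^2\,\frac{r-1}{r\log r}\quad(\al=0),\qquad
  g(z)=2\al\,\frac{r-1}{r(r^\nu-1)}\quad(\al>0).
\]
The limits $r\downarrow1$ (giving $\sigma_e^2$) and $r\uparrow\infty$ (giving $0=\max(-\al,0)$) are read off directly. Taking $\tfrac{d}{dr}\log$ of these expressions, $g'<0$ reduces respectively to $\log r<r-1$ and to $1-r^{-\nu}<\nu(r-1)$ for all $r>1$, and each of these holds because both sides agree at $r=1$ while the right-hand derivative dominates the left-hand derivative for $r>1$. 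This settles the lemma for $\al\ge0$.

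Now let $\al\in(-\sigma_e^2,0)$, so $\beta=-\tfrac{2\al}{\sigma_e^2}\in(0,2)$. For $z\downarrow0$: from \eqref{survneu}, both $\vartheta(z)/z=\tfrac{8}{\sigma_e^3}\int_0^\infty\tfrac{1-e^{-\sigma_e^2za/\sigma_b^2}}{z}\phi_\beta(a)\,da$ and $\vartheta'(z)=\tfrac{8}{\sigma_e^3}\tfrac{\sigma_e^2}{\sigma_b^2}\int_0^\infty ae^{-\sigma_e^2za/\sigma_b^2}\phi_\beta(a)\,da$ tend by dominated convergence to $\tfrac{8}{\sigma_e^3}\tfrac{\sigma_e^2}{\sigma_b^2}\int_0^\infty a\,\phi_\beta(a)\,da$, finite by Lemma~\ref{l:convergence} ($g(a)=a$, $b=1>\beta/2$); hence $g(z)\to\sigma_e^2$. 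For $z\uparrow\infty$ the claim is exactly the quotient of the two asymptotics in Lemma~\ref{l:vartheta}, namely $\tfrac{z\vartheta'(z)}{\vartheta(z)}\to\tfrac{\beta}{2}=\tfrac{-\al}{\sigma_e^2}$, so $g(z)\to-\al=\max(-\al,0)$. It remains to show $w\mapsto\log\vartheta(e^w)$ is concave. Substituting $a=e^b$ in \eqref{survneu},
\[
  \vartheta(e^w)=\tfrac{8}{\sigma_e^3}\int_{-\infty}^\infty F(w+b)\,\psi(b)\,db,\qquad
  F(u):=1-\exp\!\big(-\tfrac{\sigma_e^2}{\sigma_b^2}e^u\big),\qquad \psi(b):=\phi_\beta(e^b)e^b,
\]
so $\vartheta\circ\exp$ is (a reflection of) the convolution $F*\psi$. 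The kernel $F$ is strictly log-concave, since $(\log F)'(u)=\tfrac{(\sigma_e^2/\sigma_b^2)e^u}{\exp((\sigma_e^2/\sigma_b^2)e^u)-1}$ and $x\mapsto x/(e^x-1)$ is strictly decreasing on $(0,\infty)$. Hence, if $\psi$ is log-concave on $\R$, a convolution of log-concave functions being log-concave (Pr\'ekopa; the convolution is finite because $F(u)=O(e^u)$ as $u\to-\infty$ while $\psi$ grows only subexponentially with rate $<1$ there, and $\psi$ decays doubly-exponentially at $+\infty$), it follows that $w\mapsto\log\vartheta(e^w)$ is concave and $g$ is non-increasing. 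Together with the analyticity reduction above, this yields the strict monotonicity and completes the proof.

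The crux — and the step I expect to be the main obstacle — is the log-concavity of $\psi(b)=\phi_\beta(e^b)e^b$, equivalently concavity of $b\mapsto\log\phi_\beta(e^b)+b$. I would establish it from an integral representation of $\phi_\beta$ obtained from \eqref{defphibeta} by writing $(u+a\cosh^2\xi)^{-(\beta+2)/2}$ as a Gamma integral and performing the $u$- and $\xi$-integrations, which reduces $\phi_\beta(a)$ to the shape $\mathrm{const}\cdot e^{-a}a^{-\beta/2}\int_0^\infty s^{\beta/2}(1+s)^{-(\beta+1)/2}g_0(as)\,ds$ with $g_0$ a Laplace transform of a nonnegative function. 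On the $\log$-scale one gets $\log\psi(b)=\mathrm{const}-e^b+(1-\tfrac{\beta}{2})b+\log J(b)$, where $J$ is itself a convolution in $b$ of $v\mapsto e^{v(1+\beta/2)}(1+e^v)^{-(\beta+1)/2}$ (strictly log-concave after $s=e^v$) against $u\mapsto g_0(e^u)$; the strongly concave term $-e^b$ then has to be used to absorb the non-log-concave part of $u\mapsto g_0(e^u)$, and the remaining estimates are elementary but delicate. I would also point out that the shorter, purely probabilistic route — writing $\tfrac{z\vartheta'(z)}{\vartheta(z)}$ as an average of the strictly decreasing function $x\mapsto xf'(x)/f(x)$ against the survival-tilted weights $f(za)\phi_\beta(a)\,da$ — does \emph{not} suffice, because those weights move mass toward small $a$ as $z$ grows (exactly the mechanism producing the strictly positive limit $\max(-\al,0)$ in the weakly subcritical regime), so some quantitative log-concavity of $\phi_\beta$ genuinely has to be invoked.
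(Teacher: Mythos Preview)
Your handling of the cases $\al\ge0$ and of the two boundary limits in the weakly subcritical case is correct and essentially matches the paper. The analyticity reduction (non-increasing $+$ non-constant analytic $\Rightarrow$ strictly decreasing) is also fine. The genuine gap is exactly where you flag it: the log-concavity of $\psi(b)=\phi_\beta(e^b)e^b$ is never established. Your sketch---rewriting $\phi_\beta$ via a Gamma integral, then hoping to ``absorb the non-log-concave part of $u\mapsto g_0(e^u)$'' into the $-e^b$ term---is a plan, not a proof, and you give no indication that the delicate estimates actually close. Without this, the Pr\'ekopa convolution step does not yield the claimed concavity of $w\mapsto\log\vartheta(e^w)$, and the argument is incomplete.

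The paper sidesteps this difficulty entirely with a much lighter device. Rather than log-concavity of $\phi_\beta$ on the multiplicative scale, it needs only the far weaker fact that $\tilde\phi_\beta(a):=a^{\beta/2+1}\phi_\beta(a)$ is \emph{strictly decreasing}. This is proved in one line: substituting $\eta=\sqrt{a}\cosh\xi$ in \eqref{defphibeta} gives (up to a positive constant)
\[
\tilde\phi_\beta(a)=e^{-a}\int_0^\infty u^{(\beta-1)/2}e^{-u}\int_0^\infty\1_{\eta>\sqrt a}\,\arcosh\!\Big(\tfrac{\eta}{\sqrt a}\Big)\frac{\eta}{(u+\eta^2)^{(\beta+2)/2}}\,d\eta\,du,
\]
which is manifestly strictly decreasing in $a$. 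One then writes
\[
\frac{d}{dz}\frac{z\vartheta'(z)}{\vartheta(z)}
=\frac{\bigl(\tfrac{\beta}{2}\vartheta-z\vartheta'\bigr)\vartheta'
      +\vartheta\bigl((1-\tfrac{\beta}{2})\vartheta'+z\vartheta''\bigr)}{\vartheta^2}
\]
and shows each bracketed factor is strictly negative. This follows from the representation $\vartheta^{(n)}(z)=\int_0^\infty f^{(n)}(za)\,a^{n-\beta/2-1}\tilde\phi_\beta(a)\,da$ by integrating by parts twice (in $a$): the boundary terms vanish, and the sign of the remainder is governed by $\tilde\phi_\beta'<0$, giving directly $\tfrac{\beta}{2}\vartheta(z)<z\vartheta'(z)$ (case $n=0$, using $f'>0$ and $-\tfrac{\beta}{2}<0$) and $(1-\tfrac{\beta}{2})\vartheta'(z)<-z\vartheta''(z)$ (case $n=1$, using $f''<0$ and $1-\tfrac{\beta}{2}>0$). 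Since $\vartheta,\vartheta'>0$, the derivative is strictly negative for all $z>0$ and no analyticity argument is needed. So the structural fact you were reaching for about $\phi_\beta$ is the wrong one: monotonicity of $a^{\beta/2+1}\phi_\beta(a)$, not log-concavity, is what makes the proof go through.
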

\begin{proof}
  In all regimes $\al\in\R$, we have that $\vartheta^{'}(0)\in(0,\infty)$ and $\vartheta(0)=0$
  and, therefore,
  \begin{equation}
    \lim_{z\to0}\sigma_e^2\frac{z\vartheta^{'}(z)}{\vartheta(z)}
    =
    \sigma_e^2\frac{\lim_{z\to0}\vartheta^{'}(z)}{\lim_{z\to0}\frac{\vartheta(z)-\vartheta(0)}{z}}
    =
    \sigma_e^2\frac{\vartheta^{'}(0)}{\vartheta^{'}(0)}
    =\sigma_e^2.
  \end{equation}
  In the supercritical regime $\al>0$, we see that
  \begin{equation*}
    \frac{z\vartheta^{'}(z)}{\vartheta(z)}
    =
    \frac{z\frac{\dup}{\dup z}\Big(1-\Big(1+\frac{\sigma_e^2}{\sigma_b^2}\cdot z\Big)^{-\frac{2\alpha}{\sigma_e^2}}\Big)}
         {1-\Big(1+\frac{\sigma_e^2}{\sigma_b^2}\cdot z\Big)^{-\frac{2\alpha}{\sigma_e^2}}}
    =
    \frac{2\al z}{\sigma_b^2+\sigma_e^2z}
    \frac{1}{\Big(1+\frac{\sigma_e^2}{\sigma_b^2}\cdot z\Big)^{\frac{2\alpha}{\sigma_e^2}}-1}
  \end{equation*}
  for all $z\in(0,\infty)$.
  The derivative hereof is strictly negative
  \begin{equation*}  \begin{split}
    \frac{\dup}{\dup z}
    \frac{z\vartheta^{'}(z)}{\vartheta(z)}
    &=
    \frac{2\al\sigma_b^2
         \left(\Big(1+\frac{\sigma_e^2}{\sigma_b^2}\cdot z\Big)^{\frac{2\alpha}{\sigma_e^2}}-1\right)
         -4\al^2 z\Big(1+\frac{\sigma_e^2}{\sigma_b^2}\cdot z\Big)^{\frac{2\alpha}{\sigma_e^2}}
         }{
          \left(\sigma_b^2+\sigma_e^2 z\right)^2
         \left(\Big(1+\frac{\sigma_e^2}{\sigma_b^2}\cdot z\Big)^{\frac{2\alpha}{\sigma_e^2}}-1\right)^2
         }
    \\
    &=
    \frac{\int_0^z
         4\al^2
         \Big(1+\frac{\sigma_e^2}{\sigma_b^2}\cdot y\Big)^{\frac{2\alpha}{\sigma_e^2}-1}
         \,dy
         -4\al^2
         \int_0^z \Big(1+\frac{\sigma_e^2}{\sigma_b^2}\cdot y\Big)^{\frac{2\alpha}{\sigma_e^2}}
               + y\frac{2\al}{\sigma_b^2}\Big(1+\frac{\sigma_e^2}{\sigma_b^2}\cdot y\Big)^{\frac{2\alpha}{\sigma_e^2}-1}
               \,dy
         }{
          \left(\sigma_b^2+\sigma_e^2 z\right)^2
         \left(\Big(1+\frac{\sigma_e^2}{\sigma_b^2}\cdot z\Big)^{\frac{2\alpha}{\sigma_e^2}}-1\right)^2
         }
    \\
    &\leq
    \frac{
         -4\al^2
         \int_0^z
                y\frac{2\al}{\sigma_b^2}\Big(1+\frac{\sigma_e^2}{\sigma_b^2}\cdot y\Big)^{\frac{2\alpha}{\sigma_e^2}-1}
               \,dy
         }{
          \left(\sigma_b^2+\sigma_e^2 z\right)^2
         \left(\Big(1+\frac{\sigma_e^2}{\sigma_b^2}\cdot z\Big)^{\frac{2\alpha}{\sigma_e^2}}-1\right)^2
         }
    <0
  \end{split}     \end{equation*}
  for all $z\in(0,\infty)$.
  Moreover, it is clear that $\lim_{z\to\infty}z\vartheta^{'}(z)/\vartheta(z)=0$.

  In the critical regime $\al=0$, we get that
  \begin{equation*}
    \frac{z\vartheta^{'}(z)}{\vartheta(z)}
    =
    \frac{\sigma_e^2z}{\sigma_b^2+\sigma_e^2 z}
    \frac{1}{\log\left(1+\frac{\sigma_e^2}{\sigma_b^2}z\right)}
  \end{equation*}
  for all $z\in(0,\infty)$.
  The derivative hereof is strictly negative
  \begin{equation*}  \begin{split}
    \frac{\dup}{\dup z}
    \frac{z\vartheta^{'}(z)}{\vartheta(z)}
    &=\frac{\sigma_b^2\sigma_e^2\log\left(1+\frac{\sigma_e^2}{\sigma_b^2}z\right)
            -\sigma_e^2\sigma_e^2z
           }
          {\left(\sigma_b^2+\sigma_e^2 z\right)^2\left(\log\left(1+\frac{\sigma_e^2}{\sigma_b^2}z\right)\right)^2}
    <0
  \end{split}     \end{equation*}
  for all $z\in(0,\infty)$ where we used the inequality $\log(1+x)<x$ for all $x\in(0,\infty)$.
  Moreover, it is clear that $\lim_{z\to\infty}z\vartheta^{'}(z)/\vartheta(z)=0$.

  For the rest of the proof, we assume that $\al\in(-\sigma_e^2,0)$.
  Recall $\beta=-2\al/\sigma_e^2$.
  Lemma~\ref{l:vartheta} implies that
  \begin{equation}  \begin{split}
    \lim_{z\to\infty}\sigma_e^2\frac{z\vartheta^{'}(z)}{\vartheta(z)}
    =\sigma_e^2
    \frac{ \lim_{z\to\infty}\frac{z\vartheta^{'}(z)}{z^{\frac{\beta}{2}}\log(z)}
    }{
           \lim_{z\to\infty}\frac{\vartheta(z)}{z^{\frac{\beta}{2}}\log(z)}
    }
    =\sigma_e^2
    \frac{
       \frac{\sqrt{2\pi}}{\sigma_e^3 \sin\big(\pi\frac{\beta}{2}\big)}
        \left(\frac{\sigma_e^2}{\sigma_b^2}\right)^{\frac{\beta}{2}}
    }{
       \frac{2}{\beta}\frac{\sqrt{2\pi}}{\sigma_e^3 \sin\big(\pi\frac{\beta}{2}\big)}
        \left(\frac{\sigma_e^2}{\sigma_b^2}\right)^{\frac{\beta}{2}}
    }
    =\sigma_e^2\frac{\beta}{2}=-\al.
  \end{split}     \end{equation}
  It remains to prove monotonicity of
  $(0,\infty)\ni z\mapsto z\vartheta^{'}(z)/\vartheta(z)$.
  The first derivative hereof is
  \begin{equation}  \begin{split}  \label{eq:rewrite.derivative}
    \frac{\dup}{\dup z}
    \frac{z\vartheta^{'}(z)}{\vartheta(z)}
    &=
    \frac{\left(\frac{\beta}{2}\vartheta(z)-z\vartheta^{'}(z)\right)\vartheta^{'}(z)
           +
           \vartheta(z)\left(\left(1-\frac{\beta}{2}\right)\vartheta^{'}(z)+z\vartheta^{''}(z)\right)
    }{\left(\vartheta(z)\right)^2}
  \end{split}     \end{equation}
  for all $z\in(0,\infty)$.
  We will show that the right-hand side is negative for all $z\in(0,\infty)$.
  Define a function $\phit_{\beta}\colon(0,\infty)\to[0,\infty)$ through
  $\phit_{\beta}(a):=\tfrac{8}{\sigma_e^3}\phi_{\beta}(a)a^{\frac{\beta}{2}+1}$
  for all $a\in(0,\infty)$.
  Using the substitution $\eta=\sqrt{a}\cosh(\xi)$, we rewrite this function
  as
  \begin{equation*}  \begin{split}
    \phit_{\beta}(a)
    &=\frac{8}{\sigma_e^3}
      \int_0^\infty \int_0^\infty \frac{1}{\sqrt{2}\pi}\Gamma\Big(\frac{\beta+2}{2}\Big)
             e^{-a} u^{(\beta-1)/2}e^{-u} \frac{\sinh(\xi)\cosh(\xi) \xi}{(u+a(\cosh(\xi))^2)^{\frac{\beta+2}{2}}}d\xi\,du
      \cdot a
    \\
    &=\frac{8}{\sigma_e^3}\frac{1}{\sqrt{2}\pi}\Gamma\Big(\frac{\beta+2}{2}\Big)
     e^{-a}
      \int_0^\infty
        u^{(\beta-1)/2}e^{-u}
      \int_{\sqrt{a}}^\infty 
     \frac{\sinh(\xi)\frac{\eta}{\sqrt{a}}\arcosh\left(\frac{\eta}{\sqrt{a}}\right)}
          {(u+\eta^2)^{\frac{\beta+2}{2}}}\frac{d\eta}{\sqrt{a}\sinh(\xi)}\,du
      \cdot a
    \\
    &=\frac{8}{\sigma_e^3}\frac{1}{\sqrt{2}\pi}\Gamma\Big(\frac{\beta+2}{2}\Big)
     e^{-a}
      \int_0^\infty
        u^{(\beta-1)/2}e^{-u}
      \int_{0}^\infty 
      \1_{\eta>\sqrt{a}}\arcosh\left(\frac{\eta}{\sqrt{a}}\right)
     \frac{\eta}
          {(u+\eta^2)^{\frac{\beta+2}{2}}}d\eta\,du
  \end{split}     \end{equation*}
  for all $a\in(0,\infty)$. As $(0,\infty)\ni a\mapsto e^{-a}\arcosh\big(\tfrac{\eta}{\sqrt{a}}\big)\1_{\eta>\sqrt{a}}$
  is strictly monotonic decreasing for every $\eta\in(0,\infty)$,
  we conclude that $\phit_{\beta}$ is 
  strictly monotonic decreasing and that $\phit_{\beta}^{'}(a)<0$ for all $a\in(0,\infty)$.
  Moreover $\phit_{\beta}(a)$ decays at least exponentially fast as $a\to\infty$
  so that $\int_a^\infty b^{p}\phit_{\beta}(b)\,db<\infty$ for all $a,p\in(0,\infty)$.
  Applying~\eqref{eq:derivatives.vartheta} and integration by parts twice, it follows that
  \begin{equation}  \begin{split}  \label{eq:vartheta.esti2}
    \vartheta^{(n)}(z)
    &=\frac{8}{\sigma_e^3}\int_0^\infty f^{(n)}(za)a^n\phi_{\beta}(a)\,da
    =\int_0^\infty f^{(n)}(za)a^{n-\frac{\beta}{2}-1}\phit_{\beta}(a)\,da
    \\
    &=-\left[f^{(n)}(za)\int_{a}^\infty b^{n-\frac{\beta}{2}-1}\phit_{\beta}(b)\,db\right]_{0}^\infty
    +
    \int_0^\infty f^{(n+1)}(za)z\int_{a}^\infty b^{n-\frac{\beta}{2}-1}\phit_{\beta}(b)\,db\,da
    \\
    &=
    \int_0^\infty f^{(n+1)}(za)z
      \left(\Big[\frac{b^{n-\frac{\beta}{2}}}{n-\frac{\beta}{2}}\phit_{\beta}(b)\Big]_{a}^\infty
      -
      \int_{a}^\infty \frac{b^{n-\frac{\beta}{2}}}{n-\frac{\beta}{2}}\phit^{'}_{\beta}(b)\,db\right)\,da
    \\
    &=
    -\frac{1}{n-\frac{\beta}{2}}
    \int_0^\infty f^{(n+1)}(za)z
      a^{n-\frac{\beta}{2}}\phit_{\beta}(a)
      \,da
      -
    z\int_0^\infty f^{(n+1)}(za)
      \int_{a}^\infty \frac{b^{n-\frac{\beta}{2}}}{n-\frac{\beta}{2}}\phit^{'}_{\beta}(b)\,db\,da
  \end{split}     \end{equation}
  for all $z\in(0,\infty)$ and all $n\in\N_{0}$.
  Recall that $\phit_{\beta}^{'}(b)<0$ for all $b\in(0,\infty)$
  and note that $f^{(1)}(za)\frac{1}{-\frac{\beta}{2}}\phit_{\beta}^{'}(b)>0$
  for all $a,b,z\in(0,\infty)$.
  Thus~\eqref{eq:vartheta.esti2} with $n=0$ implies that
  \begin{equation}  \begin{split}  \label{eq:inequality.vartheta.1}
    \frac{\beta}{2}
    \vartheta(z)
    <
    -\frac{\beta}{2}
    \frac{1}{-\frac{\beta}{2}}
    \int_0^\infty f^{(1)}(za)za^{-\frac{\beta}{2}}\phit_{\beta}(a)\,da
    =
    z
    \frac{8}{\sigma_e^3}
    \int_0^\infty f^{(1)}(za)a\phi_{\beta}(a)\,da
    =
    z\vartheta^{'}(z)
  \end{split}     \end{equation}
  for all $z\in(0,\infty)$.
  Moreover
   $f^{(2)}(za)\frac{1}{1-\frac{\beta}{2}}\phit_{\beta}^{'}(b)>0$
  for all $a,b,z\in(0,\infty)$.
  So~\eqref{eq:vartheta.esti2} with $n=1$ yields that
  \begin{equation}  \begin{split}  \label{eq:inequality.vartheta.2}
    \left(1-\tfrac{\beta}{2}\right)
    \vartheta^{'}(z)
    <
    -\int_0^\infty f^{(2)}(za)za^{1-\frac{\beta}{2}}\phit_{\beta}(a)\,da
    =
    -z
    \frac{8}{\sigma_e^3}
    \int_0^\infty f^{(2)}(za)a^{2}\phi_{\beta}(a)\,da
    =
    -z
    \vartheta^{''}(z)
  \end{split}     \end{equation}
  for all $z\in(0,\infty)$.
  Applying the inequalities~\eqref{eq:inequality.vartheta.1}
  and~\eqref{eq:inequality.vartheta.2} to the right-hand side of~\eqref{eq:rewrite.derivative}
  and using $\vartheta^{'}(z),\vartheta(z)>0$ for all $z\in(0,\infty)$,
  we conclude that $\tfrac{\dup}{\dup z}\big(z\vartheta^{'}(z)/\vartheta(z)\big)<0$
  for all $z\in(0,\infty)$
  which implies that
  $(0,\infty)\ni z\mapsto z\vartheta^{'}(z)/\vartheta(z)$ is strictly
  monotonic decreasing.
\end{proof}

\subsection{The intermediately subcritical regime}
In this subsection, we will prove (\ref{thinter}).
\begin{proof}[{Proof of Theorem \ref{asymptotic_survival} in the intermediately subcritical case}]
Let $z\geq 0$ be fixed for the moment.
Lemma \ref{lem_hilf_intstrong} asserts that $\E\big[\tfrac{1}{2A_t^{(2)}}\big] = e^{-2t} \E\big[\tfrac{1}{2A_{t}}\big]$ and 
$\E\big[\tfrac{1}{(2A_t^{(2)})^2}\big] \leq e^{-2t} \rob{\E\big[\tfrac{1}{2A_{t/2}}\big]}^2$ for $t\in(0,\infty)$.
According to Lemma \ref{l:convergencecrit}, we have that
$\lim_{t\rightarrow\infty}\sqrt{t}\, \E\big[\tfrac{1}{2A_t}\big]
=\int_0^{\infty} a  \tfrac{1}{\sqrt{2\pi}}\tfrac{e^{-a}}{a}\,da<\infty$
and, therefore, $\big(\E\big[\tfrac{1}{2A_{t/2}}\big]\big)^2$ 
decays like $t^{-1}$ as $t\to\infty$.
Thus the assumptions of Lemma \ref{hilf} are met with $c_t=\sqrt{t}\, e^{2t}$
and $Y_t=1/\rob{2A_t^{(2)}}$, $t\geq1$.
Applying Corollary \ref{c:survival_prob} and Lemma \ref{hilf}, we get that
 \begin{align}
   \lim_{t\rightarrow\infty}\sqrt{t}\, e^{\frac{\sigma_e^2}{2} t} \cdot \P^z(Z_t>0)
   &= \sqrt{\frac{4}{\sigma_e^2}}\lim_{t\rightarrow\infty} \sqrt{\frac{\sigma_e^2 t}{4}}e^{2\frac{\sigma_e^2 t}{4}}
      \E\Big[f\Big(\frac{z}{2A^{(2)}_{t\sigma_e^2/4}}\Big)\Big]
 =\frac{2}{\sigma_e}\cdot\frac{z\sigma_e^2}{\sigma_b^2} \int_0^{\infty} a \frac{1}{\sqrt{2\pi}}\frac{e^{-a}}{a}\,da=z\,
 \frac{\sqrt{2}\sigma_e}{\sqrt{\pi}\sigma_b^2}, \nonumber
 \end{align}
which proves Theorem \ref{asymptotic_survival} in the case $\al=-\sigma_e^2$.
\end{proof}

\subsection{The strongly subcritical regime}
Finally, we will prove (\ref{thstrong}).
%
\begin{proof}[{Proof of Theorem \ref{asymptotic_survival} in the strongly subcritical case}]
Let  $\beta=-2\alpha/\sigma_e^2$ and assume $\alpha<-\sigma_e^2$.
Let $t>0$ and $z\in[0,\infty)$ be fixed.
The main tool for the proof is Lemma \ref{hilf}.
Let us check the conditions of that lemma.
Using Lemma \ref{lem_hilf_intstrong}, we obtain for the first and second moment of $1/(2A_t^{(\beta)})$ that
\begin{align}
\E\Big[\frac{1}{2A_t^{(\beta)}}\Big]&= e^{-(2\beta-2)t}\E\Big[\frac{1}{2A_t^{(-(\beta-2))}}\Big]\label{strongconv1}
\\
\E\Big[\frac{1}{(2A_t^{(\beta)})^2}\Big]&\leq e^{-(2\beta-2)t} \E\Big[\frac{1}{2A_{t/2}^{(\beta-2)}}\Big]\cdot \E\Big[\frac{1}{2A_{t/2}^{(-(\beta-2))}}\Big]
\end{align}
for every $t\in(0,\infty)$.
As $\beta>2$,
the monotone convergence theorem
and Lemma~\ref{lem_dufresne}
yield that
\begin{align}
\lim_{t\rightarrow\infty} \E\Big[\frac{1}{2A_{t/2}^{(-(\beta-2))}}\Big]
=
 \E\Big[\frac{1}{2A_{\infty}^{(-(\beta-2))}}\Big]
=\E\Big[ G_{\beta-2}\Big]
=\beta-2
<\infty. \label{strongconv2_a}
\end{align}
Using relation (1.1) from \cite{MatsumotoYor2003} and monotone convergence, we get that
\begin{align}   \label{strongconv2_b}
 \lim_{t\rightarrow\infty} \E\Big[\frac{1}{2A_{t/2}^{(\beta-2)}}\Big]
 =
 \lim_{t\rightarrow\infty} \E\Big[\frac{1}{2A_{t/2}^{(-(\beta-2))}}\Big]
   -\mathbb{E}\Big[G_{\beta-2}\Big]
 =0.
\end{align}
Thus, by (\ref{strongconv1}), (\ref{strongconv2_a})  and (\ref{strongconv2_b}),
the assumptions of Lemma \ref{hilf} are met with $c_t:= e^{(2\beta-2)t}$ and $Y_t=1/\rob{2A_t^{(\beta)}}$, $t\geq1$.
By Corollary \ref{c:survival_prob}, Lemma \ref{hilf} and \eqref{strongconv2_a}, we obtain that
\begin{equation}  \begin{split} \label{strongend}
  \lim_{t\rightarrow\infty} e^{(2\beta-2)\sigma_e^2 t/4}\cdot \P^z(Z_t>0)
  & = \lim_{t\rightarrow\infty} e^{(2\beta-2)\sigma_e^2 t/4}\cdot 
    \E\Big[f\Big(\frac {z}{2A_{t\sigma_e^2/4}^{(\beta)}}\Big)\Big]
  \\
  & = \frac{z\sigma_e^2}{\sigma_b^2}\lim_{t\rightarrow\infty} 
     \E\Big[\frac {1}{2A_{t\sigma_e^2/4}^{(-(\beta-2))}}\Big]
= \frac{z\sigma_e^2}{\sigma_b^2}(\beta-2)
= z2\frac{-\al-\sigma_e^2}{\sigma_b^2}
\end{split}     \end{equation}
Note that  $(2\beta-2)\sigma_e^2 /4=-(\alpha+\frac{\sigma_e^2}{2})$.
Thus, (\ref{strongend}) is the claim of Theorem \ref{asymptotic_survival} for the strongly subcritical case.
\end{proof}

%
%
%
\section{Proof of Lemma~\ref{l:conditioning.general}}
\label{sec:conditioning.general}
\begin{proof}[Proof of Lemma~\ref{l:conditioning.general}]
  Fix $t\in[0,\infty)$ and $x\in \bar{I}$ for the moment.
  As $\eta(x)>0$, there exists a $T_0\in[0,\infty)$ such that $\P^x(X_T\not\in A)>0$
  for all $T\in[T_0,\infty)$.
  Let $\phi\colon\C\left([0,t],I\right)\to\R$
  be a bounded and Borel measurable function.
  The Markov property of $(X_s)_{s\geq0}$ implies that
  \begin{equation}  \begin{split}
    \E^x\left[ \phi\left((X_s)_{s\leq t}\right) \1_{X_{T+t}\not\in A} \right]
    &=\E^x\left[ \phi\left((X_s)_{s\leq t}\right)
        \E^x\left[ \1_{X_{T+t}\not\in A}\Big|(X_s)_{s\leq t} \right]
         \right]
    \\
    &=\E^x\Big[ \phi\left((X_s)_{s\leq t}\right) \P^{X_t}\left(X_{T}\not\in A\right) \Big]
  \end{split}     \end{equation}
  for all $T\in[0,\infty)$.
  Consequently, we get for the conditional expectation that
  \begin{equation}  \begin{split}
      \E^x\left[ \phi\left((X_s)_{s\leq t}\right) \Big|{X_{T+t}\not\in A} \right]
  &=
  \frac{
    \E^x\left[ \phi\left((X_s)_{s\leq t}\right) \P^{X_t}\left(X_{T}\not\in A\right) \right]
       }{
         \P^x\left(X_{T+t}\not\in A\right)
       }
  \\
  &=
    \frac{
    \E^x\left[ \phi\left((X_s)_{s\leq t}\right) q(T)e^{\ld T}\P^{X_t}\left(X_{T}\not\in A\right) \right]
         }{ q(T+t)e^{\ld(T+t)} \P^x\left(X_{T+t}\not\in A\right) }
    \cdot
    \frac{q(T+t)e^{\ld (T+t)}}{q(T)e^{\ld T}}
  \end{split}     \end{equation}
  for all $T\in[T_0,\infty)$.
  Due to $\sup_{T\in[1,\infty)}q(T)e^{\ld T}\P^{X_t}\left(X_T\not\in A\right)\leq c\left(1+\|X_t\|^p\right)$
  for some constant $c\in[0,\infty)$
  and due to $\E[\|X_t\|^p]<\infty$, we may apply the dominated convergence theorem
  and obtain that
  \begin{equation}  \begin{split}\nonumber
    \limT\E^x\left[ \phi\left((X_s)_{s\leq t}\right) \Big|{X_{T+t}\not\in A} \right]
  &=
    \frac{\E^x\left[ \phi\left((X_s)_{s\leq t}\right) \limT q(T)e^{\ld T}\P^{X_t}\left(X_{T}\not\in A\right) \right]
         }{
          \limT q(T+t)e^{\ld(T+t)} \P^x\left(X_{T+t}\not\in A\right)
          }
     \limT\frac{q(T+t)}{q(T)}e^{\ld t}
   \\
   &=
   \frac{\E^x\left[\phi\left((X_s)_{s\leq t}\right)\eta(X_t)\right]
        }{ e^{-\ld t}\eta(x) }.
  \end{split}     \end{equation}
  If $\phi\equiv 1$, then the left-hand side is equal to $1$ and, therefore,
  the right-hand side is equal to $1$.
  Consequently the right-hand side defines a probability distribution on $\C\left([0,t],\R^d\right)$
  for every $t\in[0,\infty)$.
  These probability distributions are consistent for $t\in[0,\infty)$.
  So Kolmogorov's extension theorem (e.g.~\cite{Klenke2008})
  implies existence of a stochastic process $(\Xb_t)_{t\geq0}$ having continuous sample paths
  and being uniquely determined by
  \begin{equation}     \label{eq:determines.semigroup.uniquely}
    \limT\E^x\left[ \phi\left((X_s)_{s\leq t}\right) \Big|{X_{T+t}\not\in A} \right]
    =
    \E^x\left[\phi\left((\Xb_s)_{s\leq t}\right) \right]
    =
   \frac{\E^x\left[\phi\left((X_s)_{s\leq t}\right)\eta(X_t)\right]
        }{ e^{-\ld t}\eta(x) }
  \end{equation}
  for all $t\in[0,\infty)$.
  This proves~\eqref{eq:conditioning.general}.
  Moreover if $t\in[0,\infty)$ and if $g\colon I\to[0,\infty)$ is a Borel measurable function,
  then~\eqref{eq:semigroup.Xbar} follows from~\eqref{eq:determines.semigroup.uniquely}
  with $\phi\left((X_s)_{s\leq t}\right):=\min\left(g(X_t),n\right)$, $n\in\N$,
  and from the monotone convergence theorem as $n\to\infty$.

  Next we identify the linear operator of the martingale problem solved by $(\Xb_t)_{t\geq0}$.
  Similar arguments as above imply that
  \begin{equation}  \begin{split}
    \eta(x)&=0=\lim_{T\to\infty}q(T+t)e^{\ld(T+t)}\P^x\left(X_{T+t}\not\in A\right)
    \\
    &=\lim_{T\to\infty}\frac{q(T+t)}{q(T)}e^{\ld t}
    \E^x\left[\lim_{T\to\infty}q(T)e^{\ld T}\P^{X_t}\left(X_{T}\not\in A\right)\right]
   =e^{\ld t}\E^x\left[\eta(X_t)\right]
  \end{split}     \end{equation}
  for all $x\in I\setminus\bar{I}$ and for all $t\in[0,\infty)$.
  Moreover, the Markov property of $(X_t)_{t\geq0}$,
  the relation
  \begin{equation}
   \E^x[\eta(X_t)]-\eta(x)=\eta(x)\left(e^{-\ld t}-1\right)
    =-\int_0^t \eta(x)\ld e^{-\ld s}\,ds
    =\int_0^t \E^x\left[-\ld \eta\left(X_s\right)\right]\,ds
  \end{equation}
  for $x\in I$ and $t\in[0,\infty)$
  and Proposition 4.1.7 of~\cite{EthierKurtz1986}
  imply that
  \begin{equation}
    d\eta(X_t)=-\ld\eta(X_t)\,dt +dM_t
  \end{equation}
  for $t\in[0,\infty)$ where $(M_t)_{t\geq0}$ is a suitable martingale.
  Let $g\colon I\to\R$ be bounded and twice continuously differentiable.
  It\^{o}'s lemma shows that
  \begin{equation}
    d g(X_t)=\left(\Gen g\right)(X_t)\,dt+\left(\nabla g\,\sigma\right)(X_t)\,dW_t
  \end{equation}
  for all $t\in[0,\infty)$ where
  $\Gen g:=\nabla g\,\mu+\tfrac{1}{2}\operatorname{tr}\left(\sigma^t\left(\nabla^{t}\nabla g\right)\sigma\right)$.
  Thus It\^{o}'s lemma and symmetry of $\sigma\sigma^t$ result in
  \begin{equation*}  \begin{split}
  \lefteqn{
    de^{\ld t}\eta(X_t)g(X_t)
  }\\
    &=\ld e^{\ld t}\eta(X_t)g(X_t)\,dt+e^{\ld t}\,d\eta(X_t)g(X_t)
    \\
    &=
    \ld e^{\ld t}\eta(X_t)g(X_t)\,dt+e^{\ld t}\eta(X_t)\,d g(X_t)+e^{\ld t}g(X_t)\,d \eta(X_t)
    +e^{\ld t}\left(\nabla\eta \sigma\sigma^t\nabla^t g\right)(X_t)\,dt
    \\
    &=
    e^{\ld t}\eta(X_t)\left(\Gen g\right)(X_t)\,dt
    +e^{\ld t}\left(\eta \nabla g\sigma\right)(X_t)\,dW_t
    +e^{\ld t}g(X_t)\,dM_t
    +e^{\ld t}\left(\nabla\eta \sigma\sigma^t\nabla^t g\right)(X_t)\,dt
  \end{split}     \end{equation*}
  for all $t\in[0,\infty)$.
  Taking expectations, we infer that
  \begin{equation}
    \E^x\left[e^{\ld t}\eta(X_t)g(X_t)\right]-\eta(x)g(x)
    =
    \int_0^t \E^x\left[e^{\ld u}\eta(X_u)\left(\Genb g\right)(X_u)\right]\,du
  \end{equation}
  for all $t\in[0,\infty)$ and all $x\in I$
  where $\Genb g:=\Gen g+\frac{1}{\eta}\nabla\eta\,\sigma\sigma^t\nabla^t g$.
  This implies  for the Markov process $(\Xb_t)_{t\geq0}$ that
  \begin{equation}
    \E^x\left[g(\Xb_t)\right]-g(x)
    =
    \int_0^t \E^x\left[\left(\Genb g\right)(\Xb_u)\right]\,du
  \end{equation}
  for all $t\in[0,\infty)$ and all $x\in \bar{I}$.
  Now Proposition 4.1.7 in~\cite{EthierKurtz1986} implies
  that $(\Xb_t)_{t\geq0}$ is a solution of the martingale problem for $\Genb$.
  Finally, Theorem V.20.1 of~\cite{RogersWilliams2} shows that $(\Xb_t)_{t\geq0}$
  is a weak solution of the SDE~\eqref{eq:Xbar}.
  This completes the proof.
\end{proof}

\section{The BDRE conditioned to never go extinct}
\label{sec:The BDRE conditioned to go never extinct}
\begin{proof}[Proof of Theorem~\ref{thm:conditioning.BDRE}]
Fix $\al\in\R$ and $\sigma_b,\sigma_e\in(0,\infty)$.
We will prove Theorem \ref{thm:conditioning.BDRE} by applying Lemma \ref{l:conditioning.general}
to the process $(X_t)_{t\geq 0}=(Z_t,S_t)_{t\geq 0}$ which has state space $I:=[0,\infty)\times\R$.
Define $\mu\colon I\to\R^2$ and $\sigma\colon I\to\R^{2\times 2}$ by
\begin{equation}
\mu(z,s)=\left(\begin{array}{c}(\alpha+\frac{1}{2}\sigma_e^2) z\\
                               \alpha
               \end{array}
         \right)\quad\text{and}\quad
\sigma(z,s)=\left(\begin{array}{cc} \sqrt{\sigma_b^2 z} &  \sigma_e z\\
                                     0 & \sigma_e
                  \end{array}
             \right)
\end{equation}
for all $(z,s)\in[0,\infty)\times\R$.
We set $A=\{0\}\times\mathbb{R}$.
Note that $\{(Z_t,S_t)\not\in A\}=\{Z_t>0\}$
for all $t\in[0,\infty)$.
Moreover, define $\eta\colon I\to[0,\infty)$ through $\eta(z,s):=\vartheta(z)$ for all $(z,s)\in[0,\infty)\times\R$.
We will check the assumptions of Lemma~\ref{l:conditioning.general} for the different regimes separately.
In all cases we have that
\begin{equation}
  \E^z[|Z_t|^2]+\E^s[|S_t|^2]\leq z^2\E^0[e^{2S_t}]+2s^2+2\E^0[S_t^2]
  =z^2e^{2\al t+\sigma_e^2 t}+2s^2+2\sigma_e^2 t+2\al^2 t^2<\infty
\end{equation}
for all $(z,s)\in[0,\infty)\times\R$ and all $t\in[0,\infty)$.

\paragraph{The supercritical regime}
Let $q\equiv 1\in\CQ$, $\ld=0$ and $p=0$.
Theorem~\ref{asymptotic_survival} implies that
\begin{equation}
  \lim_{t\rightarrow\infty}q(t)e^{\ld t}  \,\mathbb{P}^{(z,s)}(Z_t>0) = 
  \eta(z,s)
  =\vartheta(z)
  =1-\Big(1+\frac{\sigma_e^2}{\sigma_b^2}\cdot z\Big)^{-\frac{2\alpha}{\sigma_e^2}}
\end{equation}
for all $(z,s)\in[0,\infty)\times\R$.
The function $\eta$ is twice continuously differentiable and satisfies
$\eta(z,s)>0$ if and only if $(z,s)\in(0,\infty)\times\R$.
Moreover, it is clear that
  $q(t)e^{\ld t}\P^{(z,s)}\left(Z_t>0\right)\leq 1$
for all $(z,s)\in I$ and for all $t\in[0,\infty)$.

\paragraph{The critical regime}
Define $q\in\CQ$ through $q(t)=\sqrt{t}$ for $t\in[0,\infty)$ and let $\ld=0$ and $p=1$.
Theorem~\ref{asymptotic_survival} implies that
\begin{equation}
  \lim_{t\rightarrow\infty}q(t)e^{\ld t}\,  \mathbb{P}^{(z,s)}(Z_t>0) = 
  \eta(z,s)
  =\vartheta(z)= \frac{\sqrt{2}}{\sqrt{\pi}\sigma_e} \log\Big(1+\frac{\sigma_e^2}{\sigma_b^2} \cdot z\Big)
\end{equation}
for all $(z,s)\in[0,\infty)\times\R$ and thus $\eta$ is twice continuously differentiable.
Note that $\eta(z,s)>0$ if and only if $(z,s)\in(0,\infty)\times\R$.
Moreover, Corollary \ref{c:survival_prob} implies that
\begin{equation}  \begin{split}
  \frac{1}{1+\|(z,s)\|}\sup_{t\in[1,\infty)} q(t)e^{\ld t}
  \P^{(z,s)}\left(Z_t>0\right)
  &=
  \frac{1}{1+\|(z,s)\|}\sup_{t\in[1,\infty)} \sqrt{t}\,
   \E\Big[f\Big(\frac{z}{2A_{t\sigma_e^2/4}}\Big)\Big]
   \\
 &\leq
  \frac{z}{1+z}\sup_{t\in[1,\infty)} \sqrt{t}\, 
  \frac{\sigma_e^2}{\sigma_b^2}\,
   \E\Big[\frac{1}{2A_{t\sigma_e^2/4}}\Big]
\end{split}     \end{equation}
for all $(z,s)\in[0,\infty)\times\R$.
The right-hand side is finite according to Lemma~\ref{l:convergencecrit}
and is uniformly bounded in $(z,s)\in[0,\infty)\times\R$.

\paragraph{The weakly subcritical regime}
Define $q\in\CQ$ through $q(t)=\sqrt{t}^3$ for $t\in[0,\infty)$ and let $\ld=\tfrac{\al^2}{2\sigma_e^2}$ and $p=1$.
Theorem~\ref{asymptotic_survival} implies that
\begin{equation}
  \lim_{t\rightarrow\infty}q(t)e^{\ld t}\,  \mathbb{P}^{(z,s)}(Z_t>0) = 
  \eta(z,s)
  =\vartheta(z)
  =\frac{8}{\sigma_e^3} \int_0^\infty f(za)\phi_\beta(a)\,da
\end{equation}
for all $(z,s)\in[0,\infty)\times\R$.
The function $\eta$ is twice continuously differentiable according to
Lemma~\ref{l:vartheta}.
Note that $\eta(z,s)>0$ if and only if $(z,s)\in(0,\infty)\times\R$.
Moreover, Corollary \ref{c:survival_prob} implies that
\begin{equation*}  \begin{split}
  \frac{1}{1+\|(z,s)\|}\sup_{t\in[1,\infty)} q(t)e^{\ld t}
  \P^{(z,s)}\left(Z_t>0\right)
  &=
  \frac{1}{1+\|(z,s)\|}\sup_{t\in[1,\infty)}\sqrt{t}^3
  e^{\frac{\al^2}{2\sigma_e^2}t}
   \E\Big[f\Big(\frac{z}{2A_{t\sigma_e^2/4}^{(\beta)}}\Big)\Big]
   \\
 &\leq
  \frac{z}{1+z}
  \frac{8}{\sigma_e^3}
  \frac{\sigma_e^2}{\sigma_b^2}
  \sup_{t\in[1,\infty)}
  \sqrt{\frac{t\sigma_e^2}{4}}^3
  e^{\big(\frac{2\al}{\sigma_e^2}\big)^2\cdot\frac{t\sigma_e^2}{4}\cdot\frac{1}{2}}
   \E\Big[\frac{1}{2A_{t\sigma_e^2/4}^{(\beta)}}\Big]
\end{split}     \end{equation*}
for all $(z,s)\in[0,\infty)\times\R$.
The right-hand side is finite according to Lemma~\ref{l:convergence}
and is uniformly bounded in $(z,s)\in[0,\infty)\times\R$.

\paragraph{The intermediately subcritical regime}
Define $q\in\CQ$ through $q(t)=\sqrt{t}$ for $t\in[0,\infty)$ and let $\ld=\tfrac{\sigma_e^2}{2}$ and $p=1$.
Theorem~\ref{asymptotic_survival} implies that
\begin{equation}
  \lim_{t\rightarrow\infty}q(t)e^{\ld t}\,  \mathbb{P}^{(z,s)}(Z_t>0) = 
  \eta(z,s)
  =\vartheta(z)
  =z\,\frac{\sqrt{2}\sigma_e}{\sqrt{\pi}\sigma_b^2}
\end{equation}
for all $(z,s)\in[0,\infty)\times\R$.
The function $\eta$ is twice continuously differentiable and satisfies
$\eta(z,s)>0$ if and only if $(z,s)\in(0,\infty)\times\R$.
Corollary \ref{c:survival_prob} implies that
\begin{equation*}  \begin{split}
  \frac{1}{1+\|(z,s)\|}\sup_{t\in[1,\infty)} q(t)e^{\ld t}
  \P^{(z,s)}\left(Z_t>0\right)
  &=
  \frac{1}{1+\|(z,s)\|}\sup_{t\in[1,\infty)}\sqrt{t}\, e^{\frac{\sigma_e^2}{2}t}
   \E\Big[f\Big(\frac{z}{2A_{t\sigma_e^2/4}^{(2)}}\Big)\Big]
   \\
 &\leq
  \frac{z}{1+z}
  \frac{\sigma_e^2}{\sigma_b^2}
  \sqrt{\frac{4}{\sigma_e^2}}
  \sup_{t\in[1,\infty)}
  \sqrt{\frac{t\sigma_e^2}{4}}e^{2\frac{t\sigma_e^2}{4}}
   \E\Big[\frac{1}{2A_{t\sigma_e^2/4}^{(2)}}\Big]
\end{split}     \end{equation*}
for all $(z,s)\in[0,\infty)\times\R$.
The right-hand side is finite according to Lemma~\ref{lem_hilf_intstrong} with $\gamma=2$
and according to Lemma~\ref{l:convergencecrit},
and is uniformly bounded in $(z,s)\in[0,\infty)\times\R$.

\paragraph{The strongly subcritical regime}
Let $q\equiv 1\in\CQ$, $\ld=-\left(\al+\tfrac{\sigma_e^2}{2}\right)=(2\beta-2)\tfrac{\sigma_e^2}{4}$ and $p=1$.
Theorem~\ref{asymptotic_survival} implies that
\begin{equation}
  \lim_{t\rightarrow\infty}q(t)e^{\ld t}\,  \mathbb{P}^{(z,s)}(Z_t>0) = 
  \eta(z,s)
  =\vartheta(z)
  =z \cdot 2\frac{-\al-\sigma_e^2}{\sigma_b^2}
\end{equation}
for all $(z,s)\in[0,\infty)\times\R$.
The function $\eta$ is twice continuously differentiable and satisfies
$\eta(z,s)>0$ if and only if $(z,s)\in(0,\infty)\times\R$.
Corollary \ref{c:survival_prob} implies that
\begin{equation}  \begin{split}\label{eq:sup.strongly}
  \frac{1}{1+\|(z,s)\|}\sup_{t\in[1,\infty)} q(t)e^{\ld t}
  \P^{(z,s)}\left(Z_t>0\right)
  &=
  \frac{1}{1+\|(z,s)\|}\sup_{t\in[1,\infty)}e^{\ld t}
   \E\Big[f\Big(\frac{z}{2A_{t\sigma_e^2/4}^{(\beta)}}\Big)\Big]
\\
 \leq
  \frac{z}{1+z}
  \frac{\sigma_e^2}{\sigma_b^2}
  \sup_{t\in[1,\infty)}
  e^{(2\beta-2)\frac{t\sigma_e^2}{4} }
   \E\Big[\frac{1}{2A_{t\sigma_e^2/4}^{(\beta)}}\Big]
 &=
  \frac{z}{1+z}
  \frac{\sigma_e^2}{\sigma_b^2}
  \sup_{t\in[1,\infty)}
   \E\Big[\frac{1}{2A_{t\sigma_e^2/4}^{(-(\beta-2))}}\Big]
\end{split}     \end{equation}
for all $(z,s)\in[0,\infty)\times\R$.
The last step follows from Lemma~\ref{lem_hilf_intstrong}.
The right-hand side of~\eqref{eq:sup.strongly} is finite
due to $-(\beta-2)<0$ and due to Lemma~\ref{lem_dufresne}
and is uniformly bounded in $(z,s)\in[0,\infty)\times\R$.

\paragraph{Application of Lemma~\ref{l:conditioning.general}}
After having checked all assumptions, we
apply Lemma~\ref{l:conditioning.general}.
The additional drift term is
\begin{equation}  \begin{split}
  \frac{1}{\eta(z,s)}\left(\sigma\sigma^t\nabla^{t}\eta\right)(z,s)
  &=\frac{1}{\vartheta(z)}
\left(\begin{array}{cc} \sqrt{\sigma_b^2 z} &  \sigma_e z\\
                                     0 & \sigma_e
                  \end{array}
\right)
\left(\begin{array}{cc} \sqrt{\sigma_b^2 z} &  0\\
                                 \sigma_e z & \sigma_e
                  \end{array}
\right)
\left(\begin{array}{c} \vartheta^{'}(z)\\
                       0
                  \end{array}
\right)
\\
  &=\frac{1}{\vartheta(z)}
\left(\begin{array}{c} \sigma_b^2 z +\sigma_e^2 z^2 \\
                                 \sigma_e^2 z 
                  \end{array}
\right)
  \vartheta^{'}(z)
\end{split}     \end{equation}
for $(z,s)\in\bar{I}=(0,\infty)\times \R$.
Inserting this into~\eqref{eq:Xbar},
we get for $(\Zb_t,\Sb_t)_{t\geq0}$ that
\begin{equation*}  \begin{split}
  d\Zb_t&=\frac{\vartheta^{'}(\Zb_t)}{\vartheta(\Zb_t)}\Big(\sigma_b^2\Zb_t+\sigma_e^2\Zb_t^2\Big)\,dt
         +\left(\frac{1}{2}\sigma_e^2\Zb_t+\al\Zb_t \right)\,dt
        +\sqrt{\sigma_b^2\Zb_t}dW_t^{(b)}
         +\Zb_t\sigma_e dW_t^{(e)}
        \\
  d\Sb_t&=\frac{\vartheta^{'}(\Zb_t)}{\vartheta(\Zb_t)}\sigma_e^2\Zb_t\,dt+\al\,dt
          +\sigma_e dW_t^{(e)}.
\end{split}     \end{equation*}
Therefore $(\Zb_t,\Sb_t)_{t\geq0}$ solves the SDEs~\eqref{eq:SDE.conditioned}.
Moreover, Lemma~\ref{l:conditioning.general} implies that
the conditioned process satisfies~\eqref{eq:semigroup.Zb}.
In addition
Lemma~\ref{l:decreasing} establishes the properties of the
function $(0,\infty)\ni z\mapsto \sigma_e^2 z\vartheta^{'}(z)/\vartheta(z)$.

It remains to establish the limit of $\Zb_t$ as $t\to\infty$.
Note that $(\Zb_t)_{t\geq0}$ is a one-dimensional diffusion
with drift term $\mu(z):=\tfrac{\vartheta^{'}(z)}{\vartheta(z)}(\sigma_b^2 z+\sigma_e^2 z^2)+\tfrac{1}{2}\sigma_e^2 z+\al z$,
$z\in(0,\infty)$ and diffusion term $\sigma^2(z):=\sigma_b^2 z+\sigma_e^2 z^2$, $z\in(0,\infty)$.
Define a scale function $R\colon[0,\infty]\to[-\infty,\infty]$ through
\begin{equation}
  R(z):=\int_1^z\exp\left(-\int_1^y\frac{2\mu(z)}{\sigma^2(z)}\,dz\right)\,dy \label{eq:invariant.general}
\end{equation}
for all $z\in[0,\infty]$.
Standard results (e.g.~\cite{KaratzasShreve1991}) show that
$\Zb_t\to\infty$ in distribution as $t\to\infty$ if $R(0)=-\infty$ and $R(\infty)<\infty$.
Let $\al\in\R$.
We rewrite the  integral in the exponent
on the right-hand side of~\eqref{eq:invariant.general} as
\begin{equation}  \begin{split}  \label{eq:rewrite.logSprime}
  \int_1^y&\frac{2\frac{\vartheta^{'}(z)}{\vartheta(z)}(\sigma_b^2 z+\sigma_e^2 z^2)+\sigma_e^2 z+2\al z}{\sigma_b^2 z+\sigma_e^2 z^2}\,dz
 =2\int_1^y\frac{\vartheta^{'}(z)}{\vartheta(z)}\,dz+\int_1^y\frac{\sigma_e^2+2\al}{\sigma_b^2+\sigma_e^2 z}\,dz
 \\
 &=\log\left(\left(\vartheta(y)\right)^2\right)
   +\frac{\sigma_e^2+2\al}{\sigma_e^2}\log\left(\sigma_b^2 +\sigma_e^2 y\right)
   -\log\left(\left(\vartheta(1)\right)^2\right)
   -\frac{\sigma_e^2+2\al}{\sigma_e^2}\log\left(\sigma_b^2 +\sigma_e^2 \right)
\end{split}     \end{equation}
for all $y\in(0,\infty)$.
By (\ref{eq:rewrite.logSprime}), there exists a constant $c\in(0,\infty)$ such that
\begin{equation}
  R(z)=c\int_1^z\frac{1}{\left(\vartheta(y)\right)^2}\left(\sigma_b^2+\sigma_e^2 y\right)^{-\frac{\sigma_e^2+2\al}{\sigma_e^2}}
        \,dy
\end{equation}
for all $z\in[0,\infty]$.
As $\vartheta(z)/z\to \tilde{c}$ as $z\to0$
for a constant $\tilde{c}=\tilde{c}(\alpha,\sigma_e,\sigma_b)>0$, we have that
$ \lim_{z\to0} R(z)\approx \int_1^0 \frac{1}{y^2} \, dy =-\infty$. 
Next we show that $R(\infty)<\infty$ whenever $\al>-\sigma_e^2$.
If $\al>0$, then $\vartheta(z)=1-\left(1+\frac{\sigma_e^2}{\sigma_b^2}\cdot z\right)^{-\frac{2\al}{\sigma_e^2}}$ and thus $R(\infty)<\infty$.
In the case $\al=0$ (and thus $\beta=0$), we have that $\vartheta(z)=\frac{\sqrt{2}}{\sqrt{\pi} \sigma_e} \log(1+\frac{\sigma_e^2}{\sigma_b^2} z)$. From $\int_2^\infty \frac{1}{y(\log(y))^2}dy<\infty$, we deduce that $R(\infty)<\infty$.
Next let $\al\in(-\sigma_e^2,0)$. 
Lemma~\ref{l:vartheta} implies that there is a constant $\hat{c}\in(0,\infty)$ such that
\begin{equation}
  R(z)\sim \hat{c}\int_2^z\frac{1}{\left(y^{\beta/2}\log(y)\right)^2}\left(\sigma_b^2+\sigma_e^2 y\right)^{\beta -1}\,dy
\end{equation}
as $z\to\infty$.
As $\int_2^\infty\frac{1}{y(\log(y))^2}\,dy<\infty$, this implies that $R(\infty)<\infty$.
Finally assume that $\al<-\sigma_e^2$.
Theorem V.54.5 in~\cite{RogersWilliams2000b} implies
that
  \begin{equation}  \label{eq:invariant.general1}
    \P^z\left(\Zb_t\in dy\right)\varwlimt
    \bar{c}\,\frac{2}{\sigma^2(y)}\exp\left(\int_1^y\frac{2\mu(u)}{\sigma^2(u)}\,du\right)\,dy
  \end{equation}
for every $z\in(0,\infty)$
if there exists a normalizing constant $\bar{c}\in(0,\infty)$
such that the right-hand side is a probability distribution.
Due to~\eqref{eq:rewrite.logSprime} we need to show that
\begin{equation}  \begin{split}  \label{eq:integrable}
  \frac{1}{\sigma_b^2 y+\sigma_e^2 y^2}\left(\vartheta(y)\right)^2
       \left(\sigma_b^2+\sigma_e^2 y\right)^{\frac{\sigma_e^2+2\al}{\sigma_e^2}}
  =
  \left(2\frac{-\al-\sigma_e^2}{\sigma_b^2}\right)^2y\left(\sigma_b^2+\sigma_e^2 y\right)^{\frac{2\al}{\sigma_e^2}}
\end{split}     \end{equation}
is integrable over $y\in(0,\infty)$.
This function is bounded over $(0,1]$ and
is of order $O(y^{1+\frac{2\al}{\sigma_e^2}})$ as $y\to\infty$.
As $\al<-\sigma_e^2$, there exists a normalizing constant $\bar{c}$ such that
the right-hand side of~\eqref{eq:invariant.general1} is a probability distribution.
\end{proof}

%
%
%
\section{Family decomposition of BDREs with immigration}
\label{sec:family.decomposition}

Let $\al,\th\in\R$, $\sigma_b\in(0,\infty)$ and $\sigma_e\in[0,\infty)$.
In this section we consider the BDRE with immigration/emigration
which is the solution of the SDEs
\begin{equation}  \begin{split}  \label{eq:BDREI}
 dZ_t&=\th\,dt+\frac 12 \sigma_e^2 Z_t\, dt+Z_t dS_t+\sqrt{\sigma_b^2 Z_t}dW^{(b)}_t\\
 dS_t&=\alpha\, dt +\sqrt{\sigma_e^2}dW^{(e)}_t
\end{split}     \end{equation}
for $t\geq0$ where $S_0=0$.
The family decomposition of the BDRE with immigration will be a corollary
of the family decomposition of Feller's branching diffusion with immigration.
For this, we first need to generalize Proposition~\ref{p:time.change}
to include immigration.
\begin{lemma}   \label{l:time.change.immigration}
  Assume $\al,\th\in\R$, $\sigma_b\in(0,\infty)$ and $\sigma_e\in[0,\infty)$.
  Let $(F_t)_{t\geq0}$ be a weak solution of
  \begin{equation}
    dF_t=\frac{\th}{\sigma_b^2}\,dt + \sqrt{F_t}dW_t^{(b)}
  \end{equation}
  for $t\in[0,\infty)$
  and let $S_t:=\al t+\sigma_e W_t^{(e)}$ for $t\in[0,\infty)$
  be independent of $(F_t)_{t\geq0}$.
  Moreover, define $(\tau(t))_{t\geq0}$ through
    $\tau(t):=\int_0^t e^{-S_s}\sigma_b^2\,ds$
  for $t\in[0,\infty)$.
  Then
  \begin{equation}  \label{eq:time.change.immigration}
    \left( F_{\tau(t)}e^{S_t}, S_t \right)_{t\geq 0}
  \end{equation}
  is a weak solution of~\eqref{eq:BDREI}.
\end{lemma}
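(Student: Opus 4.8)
The plan is to verify the claimed representation directly by It\^o calculus, exactly as in the proof of Corollary~\ref{c:Laplace.transform}: I would write down $Z_t:=F_{\tau(t)}e^{S_t}$ and show it solves~\eqref{eq:BDREI}. Since Proposition~\ref{p:time.change} is the special case $\th=0$, this simultaneously furnishes the announced alternative proof of that proposition; the only genuinely new ingredient compared with $\th=0$ is the constant immigration rate $\th/\sigma_b^2$ in the equation for $F$.

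\textbf{Step 1: time-change of the Feller martingale.} Write $F_t=F_0+\tfrac{\th}{\sigma_b^2}t+M_t$ with $M_t:=\int_0^t\sqrt{F_s}\,dW^{(b)}_s$, a continuous local martingale with $\langle M\rangle_t=\int_0^t F_s\,ds$. Almost surely $\tau$ is a strictly increasing, absolutely continuous bijection of $[0,\infty)$ with $\tau'(t)=\sigma_b^2e^{-S_t}<\infty$ (by continuity of $S$), and $\tau(t)$ is measurable with respect to $\sigma(W^{(e)}_s:s\le t)$. Enlarging the filtration of $W^{(b)}$ by the independent family $W^{(e)}$ keeps $M$ a continuous local martingale and makes each $\tau(t)$ a stopping time, so the time-change theorem for martingales yields that $N_t:=M_{\tau(t)}$ is a continuous local martingale with
\[
 \langle N\rangle_t=\langle M\rangle_{\tau(t)}=\int_0^{\tau(t)}F_s\,ds=\int_0^t \sigma_b^2e^{-S_r}G_r\,dr,\qquad G_t:=F_{\tau(t)} .
\]
Putting $\hat W^{(b)}_t:=\int_0^t(\sigma_b^2e^{-S_r}G_r)^{-1/2}\,dN_r$ and conditioning on $W^{(e)}$, the Dambis--Dubins--Schwarz theorem shows that $\hat W^{(b)}$ is a standard Brownian motion whose conditional law given $W^{(e)}$ does not depend on $W^{(e)}$; hence $\hat W^{(b)}$ is a standard Brownian motion independent of $W^{(e)}$, and
\[
 dG_t=\tfrac{\th}{\sigma_b^2}\tau'(t)\,dt+dN_t=\th e^{-S_t}\,dt+\sqrt{\sigma_b^2e^{-S_t}G_t}\,d\hat W^{(b)}_t .
\]

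\textbf{Step 2: It\^o's product rule.} The processes $G$ and $(e^{S_t})_{t\ge0}$ are continuous semimartingales for the filtration generated by $(\hat W^{(b)},W^{(e)})$ ($G$ being the strong solution of the SDE in Step 1), and since their martingale parts are driven by the independent Brownian motions $\hat W^{(b)}$ and $W^{(e)}$ their cross-variation vanishes. Using $de^{S_t}=e^{S_t}\bigl(\al+\tfrac12\sigma_e^2\bigr)\,dt+\sigma_ee^{S_t}\,dW^{(e)}_t$ together with $e^{S_t}\cdot\th e^{-S_t}=\th$, $e^{S_t}\sqrt{\sigma_b^2e^{-S_t}G_t}=\sqrt{\sigma_b^2Z_t}$ and $G_te^{S_t}=Z_t$, the product rule gives
\[
 dZ_t=e^{S_t}\,dG_t+G_t\,de^{S_t}=\th\,dt+\bigl(\al+\tfrac12\sigma_e^2\bigr)Z_t\,dt+\sqrt{\sigma_b^2Z_t}\,d\hat W^{(b)}_t+\sigma_eZ_t\,dW^{(e)}_t .
\]
Writing $\al Z_t\,dt+\sigma_eZ_t\,dW^{(e)}_t=Z_t\,dS_t$ turns this into $dZ_t=\th\,dt+\tfrac12\sigma_e^2Z_t\,dt+Z_t\,dS_t+\sqrt{\sigma_b^2Z_t}\,d\hat W^{(b)}_t$, which together with $dS_t=\al\,dt+\sigma_e\,dW^{(e)}_t$ and the independence of $\hat W^{(b)}$ and $W^{(e)}$ exhibits $(Z_t,S_t)_{t\ge0}$ as a weak solution of~\eqref{eq:BDREI}.

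\textbf{Main obstacle.} The delicate point is entirely in Step~1: making rigorous that the random, environment-measurable clock $\tau$ turns the Feller martingale into a local martingale with the stated bracket while keeping $W^{(e)}$ available as an independent driving Brownian motion, and that the normalised noise $\hat W^{(b)}$ is independent of $W^{(e)}$. I expect to handle this by the filtration enlargement above together with the independence of $F$ and $W^{(e)}$, reducing the statement about $\hat W^{(b)}$ to the deterministic time-change theorem by conditioning on $W^{(e)}$; I also plan to invoke pathwise uniqueness (Yamada--Watanabe) for the $[0,\infty)$-valued immigration SDE, so that assuming $F$ to be merely a weak solution is harmless and $G$ in Step~1 may be taken to be a strong solution.
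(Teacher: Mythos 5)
Your proposal follows essentially the same route as the paper's proof: apply It\^o's lemma to $Z_t=F_{\tau(t)}e^{S_t}$ and then recognise the time-changed stochastic integral as a fresh standard Brownian motion independent of $W^{(e)}$. The paper produces the new Brownian motion $W_t=\int_0^t(\tau'(s))^{-1/2}\,dW^{(b)}_{\tau(s)}$ via a quadratic-variation/L\'{e}vy-characterisation argument, while you reach the same conclusion through Dambis--Dubins--Schwarz after conditioning on $W^{(e)}$; both are valid, and your write-up is merely more explicit about the filtration enlargement and the independence of the resulting noise from $W^{(e)}$, which the paper states tersely.
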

\begin{proof}
  Fix $\al,\th\in\R$, $\sigma_b\in(0,\infty)$ and $\sigma_e\in[0,\infty)$.
  Define $Z_t:=F_{\tau(t)}e^{S_t}$ for $t\in[0,\infty)$.
  It\^{o}'s lemma together with independence of $(F_t)_{t\geq0}$
  and of $(S_t)_{t\geq0}$
  imply that
  \begin{equation}  \begin{split}  \label{eq:time.change.Ito}
    dZ_t&=d F_{\tau(t)}e^{S_t}\\
    &=e^{S_t}dF_{\tau(t)}
      +F_{\tau(t)}e^{S_t}d S_t
      +\frac{1}{2}F_{\tau(t)}e^{S_t}\sigma_e^2\,dt\\
    &=e^{S_t}\frac{\th}{\sigma_b^2}d\tau(t)+e^{S_t}\sqrt{F_{\tau(t)}}dW_{\tau(t)}^{(b)}
      +Z_td S_t
      +\frac{1}{2}Z_t\sigma_e^2\,dt\\
    &=e^{S_t}\frac{\th}{\sigma_b^2}\tau^{'}(t)\,dt
      +\sqrt{\sigma_b^2 F_{\tau(t)}e^{S_t}}\frac{1}{\sqrt{\sigma_b^2 e^{-S_t}}}dW_{\tau(t)}^{(b)}
      +Z_td S_t +\frac{1}{2}Z_t\sigma_e^2\,dt\\
    &=\th\,dt
      +\frac{1}{2}\sigma_e^2 Z_t\,dt
      +Z_td S_t
      +\sqrt{\sigma_b^2 Z_t}\frac{1}{\sqrt{\tau^{'}(t)}}dW_{\tau(t)}^{(b)}
  \end{split}     \end{equation}
  for $t\in[0,\infty)$.
  As $(\tau(t))_{t\geq0}$
  and $(W_t^{(b)})_{t\geq0}$ are independent,
  the process $(W_t)_{t\geq0}$ defined through
  $W_t:=\int_0^t\tfrac{1}{\sqrt{\tau^{'}(s)}}dW_{\tau(s)}^{(b)}$, $t\in[0,\infty)$,
  is a continuous martingale and a Markov process satisfying
  \begin{equation}
    \E\left[W_t^2\right]
    =
    \E\left[\left(\int_0^t\tfrac{1}{\sqrt{\tau^{'}(s)}}dW_{\tau(s)}^{(b)}\right)^2\right]
    =\E\left[\int_0^t\tfrac{1}{\tau^{'}(s)}d\tau(s)\right]
    =t
  \end{equation}
  for all $t\in[0,\infty)$.
  Thus $(W_t)_{t\geq0}$ is a standard Brownian motion according to
  L\'{e}vy's characterization (e.g.~Theorem IV.33.1 of~\cite{RogersWilliams2}).
  Moreover $(W_t)_{t\geq0}$ and
  $(W_t^{(e)})_{t\geq0}$ are independent.
  Therefore \eqref{eq:time.change.Ito} implies
  that $(Z_t,S_t)_{t\geq0}$ is a weak solution of~\eqref{eq:BDREI}.
\end{proof}
Let $\sigma_b\in(0,\infty)$ and let $(F_t)_{t\geq0}$ be the solution of the SDE
\begin{equation}  \label{eq:F.general}
  dF_t=\sqrt{\sigma_b^2 F_t}dW_t
\end{equation}
for $t\in[0,\infty)$.
Recall the associated excursion measure $Q_F$ on $U$ from~\eqref{eq:excursion.measure}.
The following family decomposition of Feller's branching diffusion
is a special case of the family decomposition of
the Dawson-Watanabe superprocess with immigration
(see~\cite{LiShiga1995} and \cite{Dawson1993}).
Recall the excursion space $U$ from~\eqref{eq:excursion.space}.
\begin{lemma}  \label{l:family.decomposition.Feller}
  Let $\th\in[0,\infty)$, $\al\in\R$ and $\sigma_b\in(0,\infty)$.
  Let $\CP_0$ be a Poisson point process
  on $[0,\infty)\times U$ with intensity measure $dy\times Q_F$
  and
  let $\tilde{\CP_\th}$ be an independent
  Poisson point process
  on $[0,\infty)\times U$ with intensity measure $\th dt\times Q_F$.
  Then the process $(\Ft_t)_{t\geq 0}$ defined through $\Ft_0=x$
  and
  \begin{equation}  \label{eq:family.decomposition.Feller}
    \Ft_t:=\sum_{(y,\chi)\in\CP_0}\1_{y\leq x}\,\chi_t
    +\sum_{(s,\chi)\in\tilde{\CP_\th}}\chi_{t-s}
  \end{equation}
  for $t\in(0,\infty)$ is a weak solution of the SDE
  \begin{equation}       \label{eq:SDE.immigration}
    d\Fb_t=\th\,dt+\sqrt{\sigma_b^2 \Fb_t}dW_t,\quad \Fb_0=x,
  \end{equation}
  for $t\in[0,\infty)$ and
  for each $x\in[0,\infty)$.
\end{lemma}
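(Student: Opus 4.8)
The plan is to prove Lemma~\ref{l:family.decomposition.Feller} by showing that the process $(\Ft_t)_{t\ge0}$ built in~\eqref{eq:family.decomposition.Feller} has the same law as the (unique in law) weak solution $(\Fb_t)_{t\ge0}$ of~\eqref{eq:SDE.immigration}; this is enough because the square-root coefficient in~\eqref{eq:SDE.immigration} gives pathwise uniqueness, hence uniqueness in law. Since $(\Fb_t)_{t\ge0}$ is a continuous-state branching process with immigration, its law is determined by its multi-time Laplace functionals, and by the branching and immigration properties these have the exponential shape
\[
  \E^x\!\left[\exp\Bigl(-\sum_{i=1}^n\lambda_i\Fb_{t_i}\Bigr)\right]
  =\exp\Bigl(-x\,V(\lambda_1,\dots,\lambda_n;t_1,\dots,t_n)
   -\th\int_0^{t_n}V\bigl(\lambda_1,\dots,\lambda_n;(t_1-s)^+,\dots,(t_n-s)^+\bigr)\,ds\Bigr)
\]
for $0<t_1<\dots<t_n$ and $\lambda_1,\dots,\lambda_n\ge0$, where $V(\,\cdot\,;\,\cdot\,)$ is the multi-time cumulant functional of the driftless equation~\eqref{eq:F.general} (equivalently $\E^\delta[\exp(-\sum_i\lambda_iF_{t_i})]=\exp(-\delta\,V(\lambda_1,\dots,\lambda_n;t_1,\dots,t_n))$, since $0$ is absorbing for the solution $(F_t)_{t\ge0}$ of~\eqref{eq:F.general}). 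I would therefore compute the same functionals for $(\Ft_t)_{t\ge0}$ and match them term by term.

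The first step is the exponential (Campbell) formula for the independent Poisson point processes $\CP_0$ and $\tilde{\CP_\th}$: for $0<t_1<\dots<t_n$ and $\lambda_i\ge0$ one gets
\[
  \E\!\left[\exp\Bigl(-\sum_{i}\lambda_i\Ft_{t_i}\Bigr)\right]
  =\exp\Bigl(-x\!\int_U\!\bigl(1-e^{-\sum_i\lambda_i\chi_{t_i}}\bigr)\,Q_F(d\chi)
   -\th\!\int_0^{t_n}\!\!\int_U\!\bigl(1-e^{-\sum_i\lambda_i\chi_{t_i-s}}\bigr)\,Q_F(d\chi)\,ds\Bigr),
\]
where the $dy$-integral over $[0,x]$ collapses to the prefactor $x$ (the integrand does not depend on $y$) and the $s$-integral runs only over $[0,t_n]$ because $\chi_{t-s}=0$ for $s\ge t$. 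The second step is to identify $\int_U(1-e^{-\sum_i\lambda_i\chi_{t_i}})\,Q_F(d\chi)$ with $V(\lambda_1,\dots,\lambda_n;t_1,\dots,t_n)$. For this I would apply the defining property~\eqref{eq:excursion.measure} of $Q_F$ to the bounded continuous functionals $\chi\mapsto\bigl(1-e^{-\sum_i\lambda_i\chi_{t_i}}\bigr)\1_{\sup_u\chi_u>\eps}$ and let $\eps\downarrow0$ by monotone convergence, using $\E^\delta[\exp(-\sum_i\lambda_iF_{t_i})]=\exp(-\delta V)$ so that $\lim_{\delta\to0}\delta^{-1}\bigl(1-\E^\delta[\exp(-\sum_i\lambda_iF_{t_i})]\bigr)=V$. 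Plugging this back (and substituting $s\mapsto t_n-s$ in the second integral, noting $\chi_{t_i-s}=0$ when $t_i\le s$) turns the right-hand side into exactly the exponential shape displayed above, so the multi-time Laplace functionals of $(\Ft_t)_{t\ge0}$ and $(\Fb_t)_{t\ge0}$ coincide; together with continuity of the paths of $(\Ft_t)$ — immediate from $\sigma$-finiteness of $Q_F$, since locally only finitely many excursions in $\CP_0\cup\tilde{\CP_\th}$ exceed any fixed level — this gives equality in law and hence that $(\Ft_t)_{t\ge0}$ is a weak solution of~\eqref{eq:SDE.immigration}.

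An economical alternative for the write-up is to observe that~\eqref{eq:family.decomposition.Feller} is precisely the degenerate, one-type (trivial spatial motion) case of the family/excursion decomposition of the Dawson--Watanabe superprocess with immigration established in~\cite{LiShiga1995} and~\cite{Dawson1993}; the only extra point is that the canonical excursion measure used there agrees with $Q_F$ as normalized in~\eqref{eq:excursion.measure}, which is Theorem~1 of~\cite{Hutzenthaler2009EJP}. (The parameter $\al$ does not enter this lemma; it is listed only for consistency with Lemma~\ref{l:time.change.immigration}.) I expect the main obstacle to be the passage from one-time marginals to the full process law, i.e.\ controlling the $\sigma$-finite measure $Q_F$ (finitely many large excursions, infinitely many small ones): this is handled either by the multi-time functional computation above, or — if one prefers a more probabilistic argument — by establishing the ``snapshot branching property'' of the excursion cloud at a fixed time $t$ (each surviving family restarts independently; a mass $z$ present at time $t$ regenerates a $\mathrm{Poisson}(z\,dy\otimes Q_F)$ cloud of fresh excursions; the part of $\tilde{\CP_\th}$ after time $t$ is an independent fresh copy), which relies on the restriction and superposition properties of Poisson point processes and the strong Markov property of Feller excursions.
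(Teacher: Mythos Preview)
Your ``economical alternative'' is exactly what the paper does: the lemma is stated without proof, preceded only by the sentence ``The following family decomposition of Feller's branching diffusion is a special case of the family decomposition of the Dawson--Watanabe superprocess with immigration (see~\cite{LiShiga1995} and~\cite{Dawson1993}).'' So on that count your proposal and the paper coincide.

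Your first, self-contained argument via multi-time Laplace functionals and Campbell's formula is a genuine addition over the paper and is essentially correct. It buys independence from the superprocess literature at the cost of a few routine verifications. Two points deserve a bit more care if you keep it: (i) the path-continuity claim. Saying ``locally only finitely many excursions exceed any fixed level'' is not quite enough, since infinitely many small excursions may be alive simultaneously; the clean argument is that for each fixed $t>0$ one has $Q_F(\chi_t>0)<\infty$, so the first sum in~\eqref{eq:family.decomposition.Feller} has the law of $F_t$ started from $x$ (hence is continuous on $(0,\infty)$ and tends to $x$ as $t\downarrow0$), while the immigration sum is a.s.\ a finite sum on any compact $[\eps,T]$ and tends to $0$ as $t\downarrow0$. (ii) In invoking~\eqref{eq:excursion.measure} to identify $\int_U(1-e^{-\sum_i\lambda_i\chi_{t_i}})Q_F(d\chi)$ with $V$, note that the test functional $\chi\mapsto 1-e^{-\sum_i\lambda_i\chi_{t_i}}$ is not of the form admitted in~\eqref{eq:excursion.measure} (it does not vanish on $\{\sup\chi\le\eps\}$); your truncation by $\1_{\sup\chi>\eps}$ and monotone-convergence passage is the right fix, but you should also justify the $\delta\to0$ limit on the truncated side, e.g.\ via $\P^\delta(\sup_u F_u>\eps)\to0$.
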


\begin{proof}[{Proof of Theorem~\ref{thm:family.decomposition}}]
  Fix $\sigma_b,\sigma_e\in(0,\infty)$ and $\al\in(-\infty,-\sigma_e^2]$.
  Define a process $(\Ft_t)_{t\geq0}$ through $\Ft_0=x$ and through
  \begin{equation}
    \Ft_t:=\sum_{(y,\chi)\in\CP}\1_{y\leq x}\,\chi_{t}
    +\sum_{(s,\chi)\in\tilde{\CP}}\chi_{t-s}
  \end{equation}
  for $t\in(0,\infty)$.
  Then Lemma~\ref{l:family.decomposition.Feller} shows that $(\Ft_t)_{t\geq0}$ is a weak solution of
  \begin{equation}
    d\Fb_t=dt+\sqrt{\Fb_t}dW_t
  \end{equation}
  for $t\in[0,\infty)$.
  Lemma~\ref{l:time.change.immigration} implies that
  $(\Zt_t,\St_t)_{t\geq0}=(\Ft_{\tilde{\tau}(t)}e^{\St_t},\St_t)_{t\geq0}$
  is a weak solution of the SDEs~\eqref{eq:Zb.linear}.
  Also $(\Zb_t,\Sb_t)_{t\geq0}$ is a solution of~\eqref{eq:Zb.linear}
  due to Theorem~\ref{thm:conditioning.BDRE}.
  As the solution of~\eqref{eq:Zb.linear} is unique in law,
  we conclude that
  $(\Zt_t,\St_t)_{t\geq0}$
  and
  $(\Zb_t,\Sb_t)_{t\geq0}$ have the same distribution.
\end{proof}

\def\cprime{$'$}
  \hyphenation{Sprin-ger}\def\cftil$1{\ifmmode\setbox7\hbox{$\accent"5E$1$}\el%
se \setbox7\hbox{\accent"5E$1}\penalty 10000\relax\fi\raise 1\ht7
  \hbox{\lower1.15ex\hbox to 1\wd7{\hss\accent"7E\hss}}\penalty 10000
  \hskip-1\wd7\penalty 10000\box7}

\end{document}